\newtheorem{theorem}{Theorem}
\newtheorem{lemma}[theorem]{Lemma}
\newtheorem{definition}[theorem]{Definition}
\newtheorem{corollary}[theorem]{Corollary}
\newtheorem{proposition}[theorem]{Proposition}
\theoremstyle{definition}
\newtheorem{example}[theorem]{Example}
\newcommand{\End}{\mathrm{End}}
\newcommand{\ann}{\mathrm{ann}}
\newcommand{\rad}{\mathrm{rad}}
\newcommand{\Nil}{\mathrm{Nil}^*}
\newcommand{\Q}{\mathbb{Q}}
\newcommand{\Z}{\mathbb{Z}}
\newcommand{\M}{\mathbb{M}}
\begin{document}

\title{The ideals of an ideal extension\thanks{2000
Mathematics Subject Classification numbers: 16S70 (primary),
16D25, 16N20  (secondary).}}

\author{Zachary Mesyan\thanks{This work was done while the author was supported by a Postdoctoral Fellowship from the Center for Advanced Studies in Mathematics at Ben Gurion University, a Vatat fellowship from the Israeli Council for Higher Education, and ISF grant 888/07.}}

\maketitle

\begin{abstract}
Given two unital associative rings $R \subseteq S$, the ring $S$ is said to be an {\em ideal} (or {\em Dorroh}) {\em extension} of $R$ if $S = R \oplus I$, for some ideal $I \subseteq S$. In this note we investigate the ideal structure of an arbitrary ideal extension of an arbitrary ring $R$. In particular, we describe the Jacobson and upper nil radicals of such a ring, in terms of the Jacobson and upper nil radicals of $R$, and we determine when such a ring is prime and when it is semiprime. We also classify all the prime and maximal ideals of an ideal extension $S$ of $R$, under certain assumptions on the ideal $I$. These are generalizations of earlier results in the literature.
\end{abstract}

\section{Introduction}

Throughout this note ``ring" will mean a unital associative ring, and ``rng" will refer to an associative ring that may not possess a unit. Given a ring $R$ and a rng $I$, we will say that $I$ is an $R$-{\em rng} if it is an $(R,R)$-bimodule, for which the actions of $R$ are compatible with multiplication in $I$ (i.e., $r(ij) = (ri)j$, $i(rj) = (ir)j$, and $(ij)r = i(jr)$ for every $r \in R$ and $i,j \in I$). If $R$ is a ring and $I$ an $R$-rng, then one can turn the abelian group $R \oplus I$ into a ring by defining multiplication by $(r,i) \cdot (p,j) = (rp, ip + rj + ij)$ for $r,p \in R$ and $i,j \in I$. Such an ring is called an {\em ideal extension} (it is also called a {\em Dorroh extension}), and we will denote it by $E(R,I)$. It is easy to verify that what we called an ideal extension in the abstract is isomorphic to a ring constructed as above. Dorroh \cite{Dorroh} first used this construction, with $R = \Z$ (the ring of integers), as a means of embedding a (nonunital) rng $I$ into a (unital) ring. However, such extensions $E(R,I)$ have proved to be useful in a number of other situations. For instance, Nicholson and Zhou \cite{NZ1} have used them to construct uniquely clean rings, i.e., ones where every element can be written as a sum of a unit and an idempotent in exactly one way (cf.\ also \cite{NZ2}). Ideal extensions have also played a very important role in classifying the minimal ring extensions of an arbitrary prime ring (cf.\ \cite{DM}). They have received particular attention in the case where $I^2 = 0$ (cf.\ \cite{Nagata}). Several different names have been used in the literature to refer to $E(R,I)$ in this situation, specifically {\em trivial extension}, {\em idealization}, and {\em split-null extension}. More general versions of the above construction have been studied as well. For instance, given two rngs $I$ and $J$, Everett \cite{Everett} described all rngs $R$ such that $J$ is an ideal of $R$ and $R/J = I$ (cf.\ also \cite{Petrich} and \cite{RB}).

In this note we study the ideal structure of an arbitrary ring of the form $E(R,I)$. Various results on this subject have appeared before; our goal here is to extend them and collect them in one place. We first describe all (two-sided) ideals, as well as all nilpotent and nil ideals of such a ring (Section~\ref{idealsection}). Then, in Section~\ref{radicalsection} we show that an element $(r,i) \in E(R,I)$ belongs to $\rad(E(R,I))$, the Jacobson radical of $E(R,I)$, if and only if $r \in \rad(R)$ and $jr+ji \in \rad(I)$ for all $j \in I$. This generalizes the well-known fact that $\rad(E(\Z,I)) = 0 \oplus \rad(I)$, as well as a theorem of Haimo \cite{Haimo} for certain commutative rings. We give an analogous description of the upper nil radical of $E(R,I)$ in Section~\ref{nilradsection}.

In Sections~\ref{semiprimesection} and~\ref{primesection} we determine when $E(R,I)$ is semiprime and when it is prime, generalizing results from \cite{DM}. More specifically, $E(R,I)$ is prime if and only if $I$ is prime, $\ann_R(I) = 0$, and there do not exist a nonzero $R$-subrng $J \subseteq I$ and an $R$-homomorphism $\varphi : J \rightarrow R$ such that for all $i \in I$, $j \in J$ one has $ij = i\varphi(j)$, $ji = \varphi(j)i$. Our description of the semiprime rings of the form $E(R,I)$ is similar. In Section~\ref{primeidealsection} we describe all the prime and maximal ideals of $E(R,I)$ in the case where $I$ comes equipped with an $R$-homomorphism $\varphi : I \rightarrow R$ that satisfies $i\varphi(j) = ij = \varphi(i)j$ for all $i,j \in I$ (for instance, every ideal $I$ of $R$ satisfies this property). More specifically, we show that in this situation an ideal $K \subseteq E(R, I)$ is prime (respectively, maximal) if and only if either $K = A \oplus I$ for some prime (respectively, maximal) ideal $A$ of $R$, or $K = \{(a, -i) : i \in I, a \in R, \text{ such that } a - \varphi(i) \in Z\}$, where $Z$ is a prime (respectively, maximal) ideal of $R$ and $\varphi (I) \not\subseteq Z$. This generalizes a theorem of D'Anna and Fontana \cite{DF} for commutative rings (cf.\ also \cite{DF2} and \cite{MY}). (In these three articles an ideal extension is called an {\em amalgamated duplication of a ring along an ideal}.)

While our primary interest is in two-sided ideals, we briefly discuss left ideals of $E(R,I)$ in Section~\ref{leftidealsection}. In particular, we describe all left ideals of such a ring and then use this description to determine when $E(R,I)$ is left noetherian and when it is left artinian. One-sided ideals of $E(R,I)$, in the case where $R$ is a commutative ring and $I$ is an $R$-algebra, are also discussed by Birkenmeier and Heatherly in \cite{BH}. More specifically, they give a necessary and sufficient condition for such $E(R,I)$ to be a strongly right bounded ring, i.e., one in which every nonzero right ideal contains a nonzero ideal.

\subsection*{Acknowledgements}  

The author is grateful to Tom Dorsey for helpful discussions about this material, Jay Shapiro for references to related literature and suggestions about directions to explore, George Bergman for comments on an earlier draft of this note, and the referee for suggestions that have led to numerous improvements in the paper.

\section{Definitions}

We begin by collecting a few basic definitions that will be needed throughout the article. All modules and bimodules over a ring will be assumed to be unital.

\begin{definition}
Let $R$ be a ring, and let $I$ be a rng. We say that $I$ is an $R$-{\em rng} if $I$ is an $(R,R)$-bimodule, and for all $r \in R$ and $i,j \in I$ one has $r(ij) = (ri)j$, $i(rj) = (ir)j$, and $(ij)r = i(jr)$.  

Given two $R$-rngs $I$ and $J$, we say that $\varphi: I \rightarrow J$ is an {\em $R$-homomorphism} if it is a rng homomorphism that is also an $(R,R)$-bimodule homomorphism.
\end{definition}

For instance, any ideal $I$ of a ring $R$ is an $R$-rng. Also, any homomorphism $R \rightarrow S$ of rings equips $S$ with the structure of an $R$-rng. One can also turn any $(R,R)$-bimodule $I$ into an $R$-rng by declaring the product of any two elements of $I$ to be zero.

Given an $R$-rng $I$, let $\ann_R(I) = \{r \in R: rI = Ir = 0\}$.  This annihilator is an ideal of $R$. (Throughout this note, ``ideal," if not
modified by ``right'' or ``left,'' will mean a two-sided ideal.)

\begin{definition} \label{defidealext}
Given a ring $R$ and an $R$-rng $I$, the {\em ideal extension} $($also known as the {\em Dorroh extension}$)$ $E(R,I)$ is the object that has the abelian group structure of $R \oplus I$ and multiplication given by $$(r,i) \cdot (p,j) = (rp, ip + rj + ij),$$ for $r, p \in R$ and $i, j \in I$.
\end{definition}
 
It is straightforward to verify that $E(R,I)$ is a ring, where $R \oplus 0$ is a subring (having the same unit) and $0 \oplus I$ is an ideal. We will abuse notation by identifying $R \oplus 0 \subseteq E(R,I)$ with $R$ and $0 \oplus I \subseteq E(R,I)$ with $I$, whenever convenient. 

\begin{definition} \label{centraldef}
We say that an $R$-rng $I$ is {\em centrally generated} if $I$ is generated as a $($left\,$)$ $R$-module by elements that commute with all elements of $R$.
\end{definition}

\begin{definition}
Given an $R$-rng $I$, we say that $J \subseteq I$ is an $R$-{\em ideal} of $I$ if $J$ is an $(R,R)$-subbimodule that is also an ideal in the rng $I$.
\end{definition}

In general, an ideal of an $R$-rng $I$ need not be an $R$-ideal. For instance, if $I^2 = 0$, then for any $i \in I$, the set $\{ai : a \in \Z\}$ is an ideal of $I$. However, such a set certainly need not be closed under multiplication by $R$.

\section{Arbitrary ideals} \label{idealsection}

Our first result describes an arbitrary ideal in a ring of the form $E(R,I)$. This is a generalization to arbitrary $R$-rngs of \cite[Proposition 3.1]{DM}, and it will be used throughout this note. A similar description of an arbitrary left ideal of $E(R,I)$ is given in Proposition~\ref{leftidealdescription}. The statement of the result is a bit technical, but, vaguely speaking, it says that every ideal of $E(R,I)$ is determined by some $R$-subrng $J\subseteq I$, an ideal $Z \subseteq R$, and a ``well-behaved" $R$-homorphism $\varphi: J \rightarrow R/Z$.

\begin{proposition}  \label{idealdescription} Let $R$ be a ring, and let $I$ be an $R$-rng.  Then every ideal of $E(R,I)$ must be of the form $$K = \{(a, -j): a \in A, j \in J, \text{ such that } a+Z = \varphi(j)\},$$ where $Z \subseteq A$ are ideals of $R$,  $J$ is an $R$-subrng of $I$, and $\varphi: J \rightarrow A/Z$ is a surjective $R$-homomorphism such that for all $(a,-j) \in K$ and $i \in I$ the following are satisfied
\begin{enumerate}
\item[$($a$)$] $ai-ji \in \ker(\varphi),$
\item[$($b$)$] $ia-ij \in \ker(\varphi).$
\end{enumerate}  
Conversely, given $A$, $J$, $Z$, and $\varphi$ as above, $K = \{(a, -j): a \in A, j \in J, \text{ such that } a+Z = \varphi(j)\}$ is an ideal of $E(R,I)$.

Further, given an ideal $K \subseteq E(R,I)$, with appropriate $A$, $J$, $Z$, and $\varphi$, the following hold.
\begin{enumerate}
\item[$(1)$] $J$ is an $R$-ideal of $I$ if and only if $AI + IA \subseteq J$. In particular, if $A = 0$, then $J$ is an $R$-ideal of $I$, and if $J = 0$, then $A \subseteq \ann_R(I)$.
\item[$(2)$] $A = Z$ if and only if $\varphi = 0$ if and only if $K = A \oplus J$. In this case $J$ is an $R$-ideal of $I$.
\item[$(3)$] If $\varphi$ is injective, then $Z \subseteq \ann_R (I)$.
\item[$(4)$] $Z \oplus \ker(\varphi) \subseteq K$ is an ideal of $E(R,I)$. In particular, $\, \ker(\varphi)$ is an $R$-ideal of $I$.
\end{enumerate}
\end{proposition}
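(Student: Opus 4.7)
The plan is to prove the forward direction by extracting the data $(A,J,Z,\varphi)$ from a given ideal $K$, prove the converse by a direct verification that the described set is closed under left and right multiplication by $E(R,I)$, and then deduce (1)--(4) as short consequences.

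For the forward direction, given an ideal $K \subseteq E(R,I)$, I would define
\[
A = \{a \in R : (a,-j) \in K \text{ for some } j \in I\}, \qquad J = \{j \in I : (a,-j) \in K \text{ for some } a \in R\},
\]
\[
Z = \{a \in R : (a,0) \in K\},
\]
and set $\varphi(j) = a+Z$ whenever $(a,-j) \in K$. The verifications split into four blocks. First, $A$ and $Z$ are ideals of $R$ and $J$ is an $R$-subrng of $I$: these follow by multiplying an element $(a,-j) \in K$ on either side by $(r,0) \in E(R,I)$ and by $(0,i) \in E(R,I)$, using only the multiplication rule from Definition~\ref{defidealext}. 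Second, $\varphi$ is well-defined, additive, and $(R,R)$-bilinear: well-definedness uses that if $(a,-j), (a',-j) \in K$ then $(a-a', 0) \in K$, so $a - a' \in Z$; additivity is immediate; bilinearity follows by multiplying by $(r,0)$. Third, $\varphi$ is multiplicative: expanding $(a,-j)(a',-j') = (aa', -(ja' + aj' - jj')) \in K$ and using bilinearity to compute $\varphi(ja' + aj' - jj') = aa' + aa' - \varphi(jj')$ forces $\varphi(jj') = aa' + Z = \varphi(j)\varphi(j')$. Fourth, conditions (a) and (b) come from $(a,-j)(0,i) = (0, -(ji - ai)) \in K$ and $(0,i)(a,-j) = (0,-(ij-ia)) \in K$, which by the definition of $K$ force the second coordinates to lie in $\ker(\varphi)$.

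For the converse, the only nontrivial point is that $K$ is closed under multiplication by arbitrary $(r,i) \in E(R,I)$. I would compute
\[
(r,i)(a,-j) = (ra,\; ia - rj - ij) = (ra, -(rj + ij - ia)).
\]
Condition (b) gives $ia - ij \in \ker(\varphi) \subseteq J$, whence $ia \in J$ and therefore $rj + ij - ia \in J$. Applying $\varphi$ and using $R$-bilinearity, $\varphi(rj+ij-ia) = r\varphi(j) + \varphi(ij-ia) = ra + Z$, so the product lies in $K$. The right multiplication is the mirror image, using condition (a).

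Parts (1)--(4) are quick deductions. For (1), since $J$ is already an $R$-subbimodule and an $R$-subrng, being an $R$-ideal amounts to $IJ + JI \subseteq J$; condition (b) gives $ij \in ia + J$ and condition (a) gives $ji \in ai + J$, so this reduces exactly to $IA + AI \subseteq J$. The special cases are immediate: $A = 0$ makes the containment trivial, and $J = 0$ forces $\ker(\varphi) = 0$, so conditions (a), (b) with $j = 0$ yield $ai = ia = 0$ for all $a \in A$, $i \in I$. Part (2) is formal from surjectivity of $\varphi$, together with (1) applied with $A = Z \subseteq \ann_R(I)$ (in fact one may use the slicker observation that $(0,i)(0,j) \in K$ when $K = A \oplus J$). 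Part (3) applies condition (a) to $(z,0) \in K$ for any $z \in Z$, giving $zi \in \ker(\varphi) = 0$, hence $zi = 0$, and similarly $iz = 0$. For (4), the set $Z \oplus \ker(\varphi)$ is precisely the ideal associated with the data $(Z, \ker(\varphi), Z, 0)$; conditions (a), (b) for this smaller data follow from the corresponding conditions for $K$, and by (2) applied to this subideal, $\ker(\varphi)$ is automatically an $R$-ideal of $I$.

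The main obstacle is the coherent setup: one must guess the correct definitions of $A$, $J$, $Z$, $\varphi$, recognize that the ``twisting sign'' $-j$ in the description is forced by how $E(R,I)$ multiplies pairs, and then verify that conditions (a) and (b) are precisely the obstructions to closure under $E(R,I)$-multiplication. Once this is in place, every individual step reduces to a short calculation with the formula $(r,i)\cdot(p,j) = (rp, ip+rj+ij)$.
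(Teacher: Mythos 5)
Your overall strategy is exactly the paper's: extract $A$, $J$, $Z$, $\varphi$ from $K$ as the coordinate projections and the ``graph'' relation, verify $\varphi$ is a well-defined surjective $R$-homomorphism, read off (a) and (b) from $(a,-j)(0,i)$ and $(0,i)(a,-j)$, and prove the converse and (1)--(4) by direct computation with the multiplication rule. Two local slips should be repaired, though neither sinks the argument. First, in the converse you write that condition (b) gives $ia - ij \in \ker(\varphi) \subseteq J$, ``whence $ia \in J$'': that inference would require $ij \in J$, which you do not know ($J$ is only an $R$-subrng, not an $R$-ideal). Fortunately you never need $ia \in J$; the containment $rj + ij - ia \in J$ follows directly from $rj \in J$ and $ij - ia = -(ia-ij) \in \ker(\varphi) \subseteq J$, and the rest of your computation goes through.

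Second, in part (2) the phrase ``(1) applied with $A = Z \subseteq \ann_R(I)$'' asserts something false: $A = Z$ does not imply $Z \subseteq \ann_R(I)$ (take $R = I = \Z$ and $K = 2\Z \oplus \Z$, where $A = Z = 2\Z$ but $\ann_\Z(\Z) = 0$); that containment is the content of (3) and holds only when $\varphi$ is injective. Your parenthetical alternative --- that $(0,i)(0,j)$ and $(0,j)(0,i)$ lie in $K = A \oplus J$, forcing $ij, ji \in J$ --- is the correct argument (and is the one the paper uses), so promote it from an aside to the actual proof. One further small omission: the statement claims $K$ \emph{equals} the displayed set, and the inclusion $\{(a,-j) : a + Z = \varphi(j)\} \subseteq K$ needs the one-line observation that $Z \oplus 0 \subseteq K$, so that $(a,-j) = (a',-j) + (a-a',0)$ with $(a',-j) \in K$ and $a - a' \in Z$. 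With these repairs the proposal matches the paper's proof in substance.
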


\begin{proof}
Suppose that $K$ is an ideal of $E(R,I)$, and let $A \subseteq R$ be the set of first coordinates of elements of $K$, namely $\{r \in R : (r, i) \in K \text{ for some } i \in I\}$. It is easy to see that $A$ must be an ideal of $R$. Now consider the set $J$ of all second coordinates of elements of $K$, that is $\{i \in I : (r,i) \in K \text{ for some } r \in R\}$.  It is clear that $J$ is an $(R,R)$-subbimodule of $I$. Letting $(r,i),(r',i') \in K$ be any two elements, the equation $$(r,i)(r',i') - (r,i)r' - r(r',i') = (-rr', ii')$$ shows that $J$ is an $R$-subrng of $I$. Setting $Z = \{r \in R: (r,0) \in K\}$, it is clear that $Z$ is an ideal of $R$ contained in $A$. We next define a map $\varphi: J \rightarrow A/Z$ as follows.  Given $j \in J$, we can find some $a \in A$ such that $(a,-j) \in K$.  Let $\varphi(j)$ be the image of $a$ in $A/Z$ under the natural projection $A \rightarrow A/Z$.  To see that $\varphi$ is well-defined, it suffices to note that if $(a,-j),(a',-j) \in K$, then $(a-a',0) \in K$, and hence $a-a' \in Z$.  To see that $\varphi$ is an $R$-homomorphism, we note that if $(a,-j),(a',-j') \in K$, then $(a+a',-(j+j')),(aa',-aj'), (aa',-ja')$, and $(aa',-jj')$ are all elements of $K$ (the last established by the equality above). The membership of the first three in $K$ shows (reducing modulo $Z$) that $\varphi$ is an $(R,R)$-bimodule homomorphism, while that of the last shows (reducing modulo $Z$) that $\varphi(jj') = \varphi(j)\varphi(j')$.  Thus $\varphi$ is an $R$-homomorphism, and it is clearly surjective. We can now check that $K = \{(a,-j): a \in A, j \in J, a + Z = \varphi(j)\}$.  For each $j \in J$, we can find some $a \in A$ such that $(a,-j) \in K$.  But $Z \oplus 0 \subseteq K$,
and hence $(a+Z,-j) \subseteq K$.  It follows that $a+Z = \varphi(j)$, and hence $\{(a,-j): a \in A, j \in J, a + Z = \varphi(j)\} \subseteq K$. That the reverse inclusion also holds is clear from the definitions of $A$, $J$, $Z$, and $\varphi$.  It remains to check that $\varphi$ satisfies conditions (a) and (b). This follows from the fact that if $(a,-j) \in K$ and $i \in I$, then $(a,-j)(0,i) = (0,ai - ji)$ and $(0,i)(a,-j) = (0,ia-ij)$ are elements of $K$.

For the converse, let $K = \{(a, -j): a \in A, j \in J, a+Z = \varphi(j)\}$, for appropriate $A$, $J$, $Z$, and $\varphi$. Since $\varphi$ is an $R$-homomorphism, it is straightforward to see that $K$ is an $(R,R)$-subbimodule of $E(R,I)$. Since $R+I = E(R,I)$, to finish showing that $K$ is an ideal of $E(R,I)$, it will suffice to check that $IK$ and $KI$ are contained in $K$.  Thus, let $i \in I$ and $(a,-j) \in K$ be any elements.  Then $(a,-j)(0,i) = (0,ai -ji) \in K,$ by condition (a), showing that $KI \subseteq K$. Condition (b) similarly implies that $IK \subseteq K$, allowing us to conclude that $K$ is an ideal of $E(R,I)$.  

To prove (1), suppose that $J$ is an $R$-ideal, in addition to being an $R$-subrng of $I$. Then for all $j \in J$ and $i \in I$, we have $ij, ji \in J$. Conditions (a) and (b) then imply that $AI + IA \subseteq J$. Conversely, if $ai, ia \in J$ for all $i \in I$ and $a \in A$, then conditions (a) and (b) imply that $ji, ij \in J$ for all $j \in J$ and $i \in I$. Thus, $J$ is an $R$-ideal if and only if $AI + IA \subseteq J$. The remaining claims in (1) are immediate.

In (2), the equivalence of  $A = Z$ and $\varphi = 0$ is clear. Further, $A = Z$ if and only if for all $j \in J$, we have $(0, j) \in K$. It follows that $A = Z$ if and only if $K = A \oplus J$.  Now, if $K = A \oplus J$, then for any $j \in J$ and $i \in I$ both $(0, ji) = (0,j)(0,i)$ and $(0,ij) = (0,i)(0,j)$ must be elements of $K$, since it is an ideal.  It follows that $J$ is an $R$-ideal of $I$. 

For (3), suppose that $\varphi$ is injective, and therefore that $\ker(\varphi) = 0$. Now, for all $z \in Z$ we have $(z,0) \in K$, and hence $zi - 0i, iz - i0 \in \ker(\varphi)$ for all $i \in I$. Thus, $ZI = IZ = 0$, showing that $Z \subseteq \ann_R(I)$.

Finally, (4) follows from the main statement of the proposition, since $$Z \oplus \ker(\varphi) = \{(z, -j): z \in Z, j \in \ker(\varphi), z + Z = \varphi(j)\},$$ and for all $z \in Z$, $j \in \ker(\varphi)$, and $i \in I$, we have $zi - ji, iz - ij \in \ker(\varphi)$ (since $(z, -j) \in K$). The last claim follows from (2).
\end{proof}

If the $\varphi$ in the above statement happens to extend to an $R$-homomorphism $I \rightarrow R/Z$, then the conditions (a) and (b) are satisfied automatically, as the following corollary shows.

\begin{corollary}\label{idealdescription2}
Let $R$ be a ring, let $I$ be an $R$-rng, and let $K = \{(a, -j): a \in R, j \in J, \text{ such that } a+Z = \varphi(j)\}$, where $Z \subseteq R$ is an ideal,  $J$ is an $R$-subrng of $I$, and $\varphi: I \rightarrow R/Z$ is an $R$-homomorphism. Then $K$ is an ideal of $E(R,I)$.
\end{corollary}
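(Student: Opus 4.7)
The plan is to invoke the converse direction of Proposition~\ref{idealdescription}. To do so, I need to find an ideal $A \subseteq R$ with $Z \subseteq A$ so that the given data $(A, J, Z, \varphi|_J)$ fall under the hypotheses of that proposition, and then verify conditions (a) and (b).

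First I would set $A = \pi^{-1}(\varphi(J))$, where $\pi: R \to R/Z$ is the canonical projection. Since $\varphi: I \to R/Z$ is an $R$-bimodule and rng homomorphism and $J$ is an $R$-subrng of $I$, the image $\varphi(J)$ is an $(R,R)$-subbimodule and a subrng of $R/Z$. The $R$-action on $R/Z$ is implemented by left and right multiplication in $R/Z$, so closure of $\varphi(J)$ under this action is equivalent to $(R/Z)\varphi(J) + \varphi(J)(R/Z) \subseteq \varphi(J)$; hence $\varphi(J)$ is an ideal of $R/Z$, and $A$ is an ideal of $R$ containing $Z$. By construction, $\varphi|_J: J \to A/Z$ identifies with $J \twoheadrightarrow \varphi(J)$ and is a surjective $R$-homomorphism. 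Moreover, the constraint $a + Z = \varphi(j)$ forces $a \in A$, so the $K$ in the corollary coincides with the set appearing in the proposition.

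The heart of the proof is then checking conditions (a) and (b). Given $(a,-j) \in K$ and $i \in I$, the extension of $\varphi$ to all of $I$ lets me compute
$$\varphi(ai - ji) \;=\; a\varphi(i) - \varphi(j)\varphi(i) \;=\; (a+Z - \varphi(j))\,\varphi(i) \;=\; 0$$
in $R/Z$, using that $\varphi$ is an $(R,R)$-bimodule map, a rng homomorphism, and that $a + Z = \varphi(j)$. A symmetric calculation gives $\varphi(ia - ij) = 0$. Thus $ai - ji$ and $ia - ij$ lie in the kernel of the restricted $\varphi|_J$, which is exactly conditions (a) and (b).

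The only delicate point will be keeping straight the two incarnations of $\varphi$, namely the given extension $I \to R/Z$ versus the restriction $J \to A/Z$ that the proposition asks for; the cleanest way to handle this is to observe that the restriction inherits its $R$-homomorphism property automatically, and that the vanishing computations above, carried out with the extension, land in the image $\varphi(J) = A/Z$ and hence transfer to the restriction. With this in hand, the converse half of Proposition~\ref{idealdescription} immediately concludes that $K$ is an ideal of $E(R,I)$.
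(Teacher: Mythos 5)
Your strategy coincides with the paper's: feed the data into the converse half of Proposition~\ref{idealdescription} (your explicit construction of $A = \pi^{-1}(\varphi(J))$ just makes precise what the paper leaves implicit) and verify conditions (a) and (b) by computing with the globally defined $\varphi$. The trouble is at the step you yourself call the heart of the proof, and your closing remark about the ``two incarnations of $\varphi$'' does not repair it. Condition (a) asserts $ai - ji \in \ker(\varphi|_J)$, and since $\varphi|_J$ has domain $J$, that assertion \emph{contains} the membership statement $ai - ji \in J$. Your computation $\varphi(ai - ji) = (a + Z - \varphi(j))\varphi(i) = 0$ shows only that $ai - ji$ lies in the kernel of the \emph{extended} map $I \rightarrow R/Z$; it gives no information about whether $ai - ji$ lies in $J$. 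As $i$ ranges over all of $I$ and $J$ is only assumed to be an $R$-subrng, neither $ai$ nor $ji$ need lie in $J$, so the ``transfer to the restriction'' you invoke at the end is precisely the point at issue, not a formality.

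The gap is not cosmetic: without a further hypothesis the statement is false, so no argument can close it. Take $R = \Z$, $I = \Q$, $J = \Z$, $Z = 0$, and $\varphi : \Q \rightarrow \Z$ the zero map, which is an $R$-homomorphism. Then $K = 0 \oplus \Z$, and $(0, 1/2)(0,1) = (0,1/2) \notin K$, so $K$ is not an ideal of $E(\Z, \Q)$; here $ai - ji = -1/2$ is annihilated by the extended $\varphi$ but does not belong to $J$, so condition (a) genuinely fails. You are in good company --- the paper's own proof of Corollary~\ref{idealdescription2} makes the identical leap --- but a correct version needs an added hypothesis forcing $ai - ji,\ ia - ij \in J$, for instance that $J$ contain the kernel of the extended $\varphi : I \rightarrow R/Z$ (in particular $J = I$, which is the only shape in which this construction is actually used later in the paper). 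With such a hypothesis your computation does finish the proof, since an element killed by the extended $\varphi$ then lies in $J$ and hence in $\ker(\varphi|_J)$.
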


\begin{proof}
It suffices to show that the conditions (a) and (b) of Proposition~\ref{idealdescription} are satisfied for all $(a,-j) \in K$ and $i \in I$. Thus, letting $(a,-j)$ and $i$ be such elements, we have $(a,-j)(0,i) = (0,ai - ji)$.  Since $\varphi$ is an $R$-homomorphism, $\varphi(ai - ji) = a \varphi(i) - \varphi(j)\varphi(i)$.  But, by assumption, $\varphi(j) = a + Z$, and so $\varphi(ai - ji) = 0 + Z$. Thus, $ai - ji \in \ker(\varphi)$, which establishes (a). Condition (b) can be verified similarly. 
\end{proof}

Statements (1), (2), and (3) in Proposition~\ref{idealdescription}, as well as Corollary~\ref{idealdescription2}, describe situations in which an ideal of $E(R,I)$ can have a particularly nice form.  However, in general, an ideal of $E(R,I)$ need not have any such form, as the following example shows.

\begin{example}
Let $E = E(\Z, \Q)$ and $K = \{(a, -a) : a \in \Z\}$ (where $\Q$ is the ring of the rational numbers). By Proposition~\ref{idealdescription}, $K$ is an ideal of $E$ (with $J = A = \Z$, $Z = 0$, and $\varphi$ the identity map). But, $J = \Z$ is not a ($\Z$-)ideal of $\Q$, and $\varphi$ does not extend to a ($\Z$-)homomorphism $\Q \rightarrow \Z$.

One can make this example more interesting by letting $L$ be any rng and taking $R = A = \Z \times \Z$, $I = \Q \times L$ (with the product action of $\Z \times \Z$), $J = \Z \times L$, $Z = 0 \times \Z$, and $\varphi: J \rightarrow R/Z$ defined by $\varphi(a,b) = (a,0) + Z$ for all $(a,b) \in J$. Here $Z \neq 0$, $\varphi$ is not injective, and $Z \not\subseteq \ann_R(I)$.
\end{example}

Let us next describe the nilpotent and nil ideals of $E(R,I)$.

\begin{lemma} \label{nilideals}
Let $R$ be a ring, let $I$ be an $R$-rng, and let $K = \{(a, -j): a \in A, j \in J, a+Z = \varphi(j)\}$ be an ideal of $E(R,I)$, for appropriate $A$, $J$, $Z$, and $\varphi$ $($cf.\ Proposition~\ref{idealdescription}\,$)$. Then the following are equivalent:
\begin{enumerate}
\item[$(1)$] $K$ is nilpotent $($respectively, nil\,$)$,
\item[$(2)$] $A$ and $J$ are nilpotent $($respectively, nil\,$)$,
\item[$(3)$] $A$ and $\, \ker(\varphi)$ are nilpotent $($respectively, nil\,$)$.
\end{enumerate}
\end{lemma}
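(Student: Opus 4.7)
The plan is to exploit the short exact sequence of rngs
\[
0 \longrightarrow 0 \oplus \ker(\varphi) \longrightarrow K \longrightarrow A \longrightarrow 0
\]
implicit in the description of $K$, and then appeal to the standard fact that nilpotence and nilness are preserved under subrngs, quotient rngs, and extensions.

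First I would record two structural facts. By Proposition~\ref{idealdescription}(4), $0 \oplus \ker(\varphi) \subseteq K$. The first-coordinate projection $\pi : K \to R$, $(a,-j) \mapsto a$, is a surjective rng homomorphism onto $A$ whose kernel is exactly $0 \oplus \ker(\varphi)$, so $K/(0 \oplus \ker(\varphi)) \cong A$ as rngs. In addition, the very construction of $\varphi$ makes it a surjective rng homomorphism $J \to A/Z$, giving $J/\ker(\varphi) \cong A/Z$.

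With this in hand, (1)~$\Rightarrow$~(3) is immediate: a nilpotent (resp.\ nil) $K$ has nilpotent (resp.\ nil) quotient $A = \pi(K)$ and nilpotent (resp.\ nil) subrng $0 \oplus \ker(\varphi)$; the latter means $\ker(\varphi)$ itself is nilpotent (resp.\ nil) in $I$, since multiplication in $0 \oplus \ker(\varphi)$ matches multiplication in $I$. For (3)~$\Leftrightarrow$~(2), the inclusion $\ker(\varphi) \subseteq J$ immediately yields (2)~$\Rightarrow$~(3); for the reverse, $A/Z$ is nilpotent (resp.\ nil) as a quotient of $A$, hence so is $J/\ker(\varphi)$, and combining with $\ker(\varphi)$ nilpotent (resp.\ nil) yields $J$ nilpotent (resp.\ nil) by the usual extension argument (for nilpotent, $J^p \subseteq \ker(\varphi)$ so $J^{pq} = 0$; for nil, given $j \in J$ find $p$ with $j^p \in \ker(\varphi)$, then $q$ with $j^{pq} = (j^p)^q = 0$).

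Finally, for (3)~$\Rightarrow$~(1), assume $A^p = 0$ and $\ker(\varphi)^q = 0$ in the nilpotent case. Since the first coordinate of any $p$-fold product in $K$ lies in $A^p = 0$, we have $K^p \subseteq 0 \oplus \ker(\varphi)$, whence $K^{pq} \subseteq 0 \oplus \ker(\varphi)^q = 0$. For the nil case, given $(a,-j) \in K$, choose $p$ with $a^p = 0$; then $(a,-j)^p \in 0 \oplus \ker(\varphi)$, say $(a,-j)^p = (0,-k)$ with $k \in \ker(\varphi)$, and choosing $q$ with $k^q = 0$ gives $(a,-j)^{pq} = (0, (-1)^q k^q) = 0$. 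The only real subtlety is observing, in the nilpotent half of this last step, that the first-coordinate map is multiplicative so that $A^p = 0$ does force $K^p \subseteq 0 \oplus I$; everything else is bookkeeping.
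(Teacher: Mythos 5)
Your proof is correct and follows essentially the same route as the paper's: the paper likewise uses $Z \oplus \ker(\varphi) \subseteq K$, the isomorphism $J/\ker(\varphi) \cong A/Z$, and the observation that a product of $n$ elements of $K$ with vanishing first coordinate lands in $K \cap (0 \oplus J) = 0 \oplus \ker(\varphi)$, differing only in that it phrases the bookkeeping as explicit products rather than via the exact sequence $0 \to 0 \oplus \ker(\varphi) \to K \to A \to 0$.
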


\begin{proof}
We will prove only the ``nilpotent" version of the statement, since the ``nil" version can be proved similarly.

Suppose that $K^n = 0$ for some positive integer $n$. Since $Z \oplus \ker(\varphi) \subseteq K$, we have $Z^n = 0$ and $\ker(\varphi)^n = 0$. If $a_1, \dots, a_n \in A$ are any elements, then $(a_1, - j_1), \dots, (a_n, - j_n) \in K$ for some $j_1, \dots, j_n \in J$. Since $(a_1, - j_1) \dots (a_n, - j_n) = 0$, we see that $a_1 \dots a_n = 0$, and hence $A^n = 0$. Now $J/\ker(\varphi)$ is $R$-isomorphic to $A/Z$, and hence $J^n \subseteq \ker(\varphi)$. But, $\ker(\varphi)^n = 0$, and therefore $J^{n^2} = 0$. In particular, both $A$ and $J$ are nilpotent. Thus, (1) implies (2), and, of course, (2) implies (3) tautologically.

To show that (3) implies (1), suppose that $A^n = 0$ and $\ker(\varphi)^m = 0$ for some positive integers $n$ and $m$. Let $(a_1, -j_1), (a_2, -j_2), \dots, (a_{mn}, -j_{mn}) \in K$ be any elements. Now, $$(a_1, -j_1) (a_2, -j_2) \dots (a_{mn}, -j_{mn}) = ((a_1, -j_1) \dots (a_{n}, -j_{n}))((a_{n+1}, -j_{n+1}) \dots (a_{2n}, -j_{2n}))$$ $$\dots ((a_{(m-1)n+1}, -j_{(m-1)n+1}) \dots (a_{mn}, -j_{mn})).$$ Since $A^n = 0$, this is a product of $m$ elements of $K \cap (0 \oplus J) = 0 \oplus \ker(\varphi)$. Since $\ker(\varphi)^m = 0$, the entire product must be zero, showing that $K^{mn} = 0$.
\end{proof}

We finish the section with some easy observations to which it will be convenient to refer in the sequel.

\begin{lemma}\label{dirsumideal}
Let $R$ be a ring, and let $I$ be an $R$-rng. 
\begin{enumerate}
\item[$(1)$] If $J$ is an $R$-ideal of $I$, then $0 \oplus J$ is an ideal of $E(R,I)$. 
\item[$(2)$] If $A \subseteq \ann_R(I)$ is an ideal of $R$, then $A \oplus 0$ is an ideal of $E(R,I)$.
\item[$(3)$] Let $A$ be an ideal of $R$, and let $J$ be an $R$-ideal of $I$. If $AI + IA \subseteq J$, then $A \oplus J$ is an ideal of $E(R,I)$.
\end{enumerate}
\end{lemma}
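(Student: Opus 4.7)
The plan is to verify each statement by a short direct calculation, since in every case the subset in question is manifestly an additive subgroup of $E(R,I)$, and the only thing left to check is closure under left and right multiplication by arbitrary elements $(r,i) \in E(R,I) = R \oplus I$. Recall the multiplication rule $(r,i)(p,j) = (rp, ip + rj + ij)$.

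For (1), an arbitrary product $(r,i)(0,j)$ equals $(0, rj + ij)$, which lies in $0 \oplus J$ because $rj \in J$ (since $J$ is an $(R,R)$-subbimodule) and $ij \in J$ (since $J$ is an ideal of the rng $I$); the right multiplication $(0,j)(r,i) = (0, jr + ji)$ is handled symmetrically. For (2), $(r,i)(a,0) = (ra, ia)$, and since $a \in \ann_R(I)$ we have $ia = 0$, so the product is $(ra, 0) \in A \oplus 0$; right multiplication is analogous. For (3), $(r,i)(a,j) = (ra, ia + rj + ij)$: the first coordinate $ra$ lies in $A$ since $A$ is an ideal of $R$, while the three summands in the second coordinate lie in $J$ by the hypothesis $IA \subseteq J$, by the bimodule action of $R$ on $J$, and by the $R$-ideal property of $J$, respectively. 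The case of right multiplication is again symmetric, using $AI \subseteq J$.

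Alternatively, one can unify all three parts by appealing to Proposition~\ref{idealdescription}: in each case set $Z = A$, $\varphi = 0$, and take the $R$-subrng indicated, so that $K = A \oplus J$ falls under the situation of part (2) of that proposition. Conditions (a) and (b) from Proposition~\ref{idealdescription} then specialize to $ai, ia \in J$ for all $a \in A$ and $i \in I$, which is precisely the hypothesis in each case: vacuously true when $A = 0$ in (1), forced by $A \subseteq \ann_R(I)$ together with $J = 0$ in (2), and equivalent to $AI + IA \subseteq J$ in (3). I do not anticipate any real obstacle here; each part is a direct verification from the definition of multiplication in $E(R,I)$, and the only subtlety worth flagging is remembering to check \emph{both} left and right closure because the $R$-rng $I$ need not be commutative.
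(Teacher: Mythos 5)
Your proof is correct and takes exactly the route the paper intends: the paper's own proof simply states that all three claims "can be quickly verified directly or deduced from Proposition~\ref{idealdescription}," and you have carried out both the direct computation and the deduction via that proposition (with $Z=A$, $\varphi=0$) in full. Nothing is missing.
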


\begin{proof}
All three claims can be quickly verified directly or deduced from Proposition~\ref{idealdescription}.
\end{proof}

By Proposition~\ref{idealdescription}, the converse of statement (3) in the above lemma also holds. 

\begin{lemma}\label{centrallemma}
Let $R$ be a ring, and let $I$ be a centrally generated $R$-rng. If $A$ is an ideal of $R$, then $AI = IA$, and it is an $R$-ideal of $I$. 
\end{lemma}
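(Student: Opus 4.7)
The plan is to exploit the centrally generated hypothesis by fixing a generating set $S \subseteq I$ for $I$ as a left $R$-module whose elements commute with every $r \in R$. Every $i \in I$ then has the form $\sum_k r_k s_k$ with $r_k \in R$ and $s_k \in S$, and on each generator $s_k$ we may freely trade left for right $R$-multiplication.

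First I would prove $AI = IA$. For $a \in A$ and $i = \sum_k r_k s_k$, module associativity gives $ai = \sum_k (ar_k)s_k$; since $ar_k \in A$ and $s_k$ is central, this equals $\sum_k s_k (ar_k) \in IA$. The reverse inclusion is symmetric, chaining the bimodule compatibility $(r_k s_k)a = r_k(s_k a)$, centrality $s_k a = a s_k$, and module associativity $r_k(as_k) = (r_k a)s_k$ with $r_k a \in A$. Next, $AI$ is an $(R,R)$-subbimodule of $I$ essentially for free: $r(ai) = (ra)i \in AI$ since $ra \in A$, and $(ai)r = a(ir) \in AI$ by the bimodule axiom that the two $R$-actions on $I$ commute.

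Finally I would check that $AI$ is a two-sided ideal of the rng $I$. Right multiplication by $I$ is immediate from the $R$-rng axiom $r(ij) = (ri)j$, giving $(ai)i_0 = a(ii_0) \in AI$. Left multiplication is the one step that genuinely uses Step~1: the axiom $i(rj) = (ir)j$ yields $i_0(ai) = (i_0 a)i$, and although $i_0 a$ a priori sits in $IA$, the equality $IA = AI$ rewrites it as an element of $AI$, which then remains in $AI$ upon right multiplication by $i$. Closure of $AI$ under its own multiplication is a special case. The only real obstacle is keeping the three $R$-rng axioms and the bimodule-commutation axiom straight; the one content-bearing observation is that central generators let us swap sides of multiplication, which is exactly what produces $AI = IA$ and, through it, the left-sided closure.
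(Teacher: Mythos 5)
Your proposal is correct and follows essentially the same route as the paper: write $I=\sum_l Rx_l$ with the $x_l$ central over $R$, deduce $AI=\sum_l Ax_l=\sum_l x_lA=IA$, and then verify the bimodule and ideal closure properties. The only cosmetic difference is at the last step, where the paper notes directly that $IA$ is closed under left multiplication by $I$ (via $j(ia)=(ji)a$) rather than rerouting $ja$ through the equality $IA=AI$ as you do; both are valid.
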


\begin{proof}
Write $I =  \sum_{l} Rx_l$, where the elements $x_l \in I$ commute with all elements of $R$. Then $AI = \sum_{l} Ax_l = \sum_{l} x_lA = IA$. Now, it is clear that $AI = IA$ is an $(R,R)$-bimodule, that $AI$ is closed under multiplication on the right by elements of $I$, and that $IA$ is closed under multiplication on the left by elements of $I$. Hence, $AI = IA$ is an $R$-ideal.
\end{proof}

\section{The Jacobson radical} \label{radicalsection}

Let us remind the reader that an element $i$ of a rng $I$ is said to be {\em left quasi-regular} if $i + k + ki = 0$ for some $k \in I$, and it is said to be {\em right quasi-regular} if $i + k + ik = 0$ for some $k \in I$. The {\em Jacobson radical of $I$}, denoted $\rad(I)$, is the set $$\{i \in I : ji \text{ is left quasi-regular for all } j \in I \}$$ $$ = \{i \in I : ij \text{ is right quasi-regular for all } j \in I \},$$ which is an ideal of $I$. It is not hard to show that all elements of $\rad(I)$ are left and right quasi-regular (e.g., cf.~\cite[Chapter 4]{Lam}). If $I$ is a ring, then $\rad(I)$ $$= \{i \in I : \forall j \in I, \ \exists k \in I \text{ such that } k(1-ji) = 1\}$$ $$ = \{i \in I : \forall j \in I, \ \exists k \in I \text{ such that } (1-ij)k = 1\}.$$ It is well known that for a rng $I$, $\rad(E(\Z, I)) = 0 \oplus \rad(I)$ (e.g., cf.\ \cite[Chapter 4, Lemma 63]{Divinsky}, also cf.\ \cite{RH} and \cite{Feigelstock} for generalizations of this to arbitrary radicals in place of the Jacobson radical). The ideal $\rad(E(R,I))$, for certain commutative rings $E(R,I)$, is described in \cite[Theorem 6]{Haimo} (more on this below). The goal of this section is to describe $\rad(E(R,I))$ for all rings $R$ and $R$-rngs $I$. We begin with a couple of lemmas.

\begin{lemma}
Let $R$ be a ring, and let $I$ be an $R$-rng. Then $\, \rad(I)$ is an $R$-ideal of $I$.
\end{lemma}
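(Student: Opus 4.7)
The plan is to show that $\rad(I)$, which is already an ideal of the rng $I$, is closed under the bimodule action of $R$ on both sides. Concretely, given $i \in \rad(I)$ and $r \in R$, I need to establish that $ri \in \rad(I)$ and $ir \in \rad(I)$, since combined with the fact that $\rad(I)$ is a two-sided rng-ideal this yields the $R$-ideal property.

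To handle $ri$, I would use the first characterization of the radical, namely $\rad(I) = \{x \in I : jx \text{ is left quasi-regular for all } j \in I\}$. For any $j \in I$, the $R$-rng identity $r(ij) = (ri)j$, applied with the roles of $r$ and $j$ interchanged (i.e.\ using $j(ri) = (jr)i$, which follows since $jr \in I$ and $jr$ can be multiplied by $i$ directly from the rng structure once we observe the bimodule relation), lets me rewrite $j(ri) = (jr)i$. Since $jr$ is an element of $I$ and $i \in \rad(I)$, the element $(jr)i$ is left quasi-regular. This holds for every $j$, so $ri \in \rad(I)$.

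For the claim $ir \in \rad(I)$, I would symmetrically use the second characterization $\rad(I) = \{x \in I : xj \text{ is right quasi-regular for all } j \in I\}$ together with the $R$-rng identity $i(rj) = (ir)j$. This gives $(ir)j = i(rj)$, and since $rj \in I$ and $i \in \rad(I)$, the product $i(rj)$ is right quasi-regular, as required.

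There is no real obstacle here: the proof is essentially a one-line manipulation using the compatibility axioms of an $R$-rng together with the standard left/right characterizations of the Jacobson radical. The only thing to be careful about is to invoke the correct one of the three $R$-rng identities in each direction so as to pull the scalar $r$ out of a product already known to be quasi-regular.
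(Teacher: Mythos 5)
Your proof is correct and follows essentially the same route as the paper: rewrite $j(ri)=(jr)i$ and use that $(jr)i$ is left quasi-regular since $jr\in I$ and $i\in\rad(I)$, then argue symmetrically for $ir$ with the right quasi-regular characterization. The identity $j(ri)=(jr)i$ you need is literally the second $R$-rng axiom $i(rj)=(ir)j$ with the variables renamed (not a consequence of $r(ij)=(ri)j$), but the step itself is valid.
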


\begin{proof}
Since $\rad(I)$ is an ideal of $I$, we just need to show that it is closed under multiplication by $R$. Let $i \in \rad(I)$, $j \in I$, and $r \in R$ be any elements. Then we can find $k \in I$ such that $0 = (jr)i + k + k(jr)i = j(ri) + k + kj(ri)$, and hence $RI \subseteq \rad(I)$. Using the description of $\rad(I)$ in terms of right quasi-regular elements, one can similarly show that $IR \subseteq \rad(I)$.
\end{proof}

\begin{lemma}\label{radI}
Let $R$ be a ring, and let $I$ be an $R$-rng. Then $I \cap \rad(E(R,I)) = \rad(I)$.
\end{lemma}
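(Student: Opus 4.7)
The plan is to prove the two inclusions separately, using the characterization of the Jacobson radical of a rng in terms of quasi-regularity. The key observation throughout is that when both factors lie in $0 \oplus I$, multiplication in $E(R,I)$ restricts to multiplication in $I$, so statements about quasi-regularity in $E(R,I)$ and in $I$ of elements of the form $(0,k)$ are literally the same statement.

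For the inclusion $\rad(I) \subseteq I \cap \rad(E(R,I))$, I would take $i \in \rad(I)$ and show $(0,i) \in \rad(E(R,I))$. Since $E(R,I)$ is unital, this amounts to showing that $(0,i)(p,j)$ is right quasi-regular in $E(R,I)$ for every $(p,j) \in E(R,I)$. Computing $(0,i)(p,j) = (0,\,ip + ij)$, I would invoke the preceding lemma, which says $\rad(I)$ is an $R$-ideal of $I$; hence $ip + ij \in \rad(I)$, and so this element is right quasi-regular in $I$. A witness $k \in I$ to right quasi-regularity in $I$ gives $(0,k) \in E(R,I)$ as a witness to right quasi-regularity of $(0,\,ip+ij)$ in $E(R,I)$, because $(0,ip+ij) + (0,k) + (0,ip+ij)(0,k) = (0,\, ip+ij + k + (ip+ij)k) = 0$.

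For the reverse inclusion, I would take $(0,i) \in \rad(E(R,I))$ and prove $i \in \rad(I)$, using the left-quasi-regular characterization of $\rad(I)$. For any $j \in I$, the element $(0,j)(0,i) = (0, ji)$ lies in $\rad(E(R,I))$ since the latter is an ideal. Every element of the Jacobson radical of a unital ring is left quasi-regular, so there exists $(s,k) \in E(R,I)$ with
\[
(0, ji) + (s,k) + (s,k)(0, ji) = 0.
\]
Since $(s,k)(0, ji) = (0, k(ji))$, this forces $s = 0$ and $ji + k + k(ji) = 0$, i.e., $ji$ is left quasi-regular in $I$. As this holds for every $j \in I$, we conclude $i \in \rad(I)$.

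The only place one could slip is conflating the two notions of quasi-regularity (in the rng $I$ versus in the unital ring $E(R,I)$), but since $(0,\cdot)$ is both an additive embedding and a multiplicative embedding of $I$ into $E(R,I)$, the translation is mechanical. Thus no real obstacle arises, and the earlier lemma that $\rad(I)$ is an $R$-ideal is the only nontrivial input needed.
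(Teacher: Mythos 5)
Your proof is correct and follows essentially the same route as the paper's: both inclusions are handled via quasi-regularity, with the lemma that $\rad(I)$ is an $R$-ideal doing the real work in the forward direction (the paper uses the left quasi-regular characterization where you use the right one, which is immaterial). One trivial slip: $(s,k)(0,ji) = (0,\, s(ji) + k(ji))$, not $(0,\,k(ji))$; the omitted term vanishes only after the first coordinate forces $s=0$, so the conclusion stands.
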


\begin{proof}
Let $i \in \rad(I)$ and $(r, j) \in E(R,I)$ be any elements. Then $(r, j)(0, i) = (0, ri + ji) \in 0 \oplus \rad(I)$, by the previous lemma. Hence, there exists $k \in I$ such that $(ri + ji) + k + k(ri + ji) = 0$. It follows that $(r, j)(0, i) + (0, k) + (0, k)(r, j)(0, i) = 0.$ This shows that $0 \oplus \rad(I) \subseteq \rad(E(R,I))$. 

For the opposite inclusion, suppose that $(0, i) \in I \cap \rad(E(R,I))$, and let $j \in I$ be any element. Then there exists $(r, k) \in E(R,I)$ such that $$0 = (0, j)(0, i) + (r, k) + (r, k)(0, j)(0, i) = (0, ji) + (r, k) + (0, rji + kji).$$ It follows that $r = 0$, and hence $0 = ji + k + kji$, showing that $i \in \rad(I)$.
\end{proof}

We are now ready to describe $\rad(E(R,I))$.

\begin{theorem} \label{radE} 
Let $R$ be a ring, let $I$ be an $R$-rng, and let $r \in R$, $i \in I$ be any elements. Then the following are equivalent:
\begin{enumerate}
\item[$(1)$] $(r, i) \in \rad(E(R,I))$,
\item[$(2)$] $r \in \rad(R)$ and $jr+ji \in \rad(I)$ for all $j \in I$,
\item[$(3)$] $r \in \rad(R)$ and $rj+ij \in \rad(I)$ for all $j \in I$.
\end{enumerate}
Moreover, $(r, 0) \in \rad(E(R,I))$ if and only if $r \in \{r \in \rad(R) : rI + Ir \subseteq \rad(I)\}$, and $(0, i) \in \rad(E(R,I))$ if and only if $i \in \rad(I)$.
\end{theorem}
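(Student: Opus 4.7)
The plan is to establish the equivalences by proving (1)$\Rightarrow$(2), (1)$\Rightarrow$(3), (2)$\Rightarrow$(1), and (3)$\Rightarrow$(1) (the last by an argument symmetric to the one for (2)$\Rightarrow$(1)), and then to handle the ``Moreover'' claims separately.

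For (1)$\Rightarrow$(2), I would use two facts about $\rad(E(R,I))$: first, the natural projection $\pi \colon E(R,I) \to R$ sending $(r,i) \mapsto r$ is a surjective ring homomorphism, so $\pi(\rad(E(R,I))) \subseteq \rad(R)$, giving $r \in \rad(R)$; second, Lemma~\ref{radI} gives $I \cap \rad(E(R,I)) = \rad(I)$, so for any $j \in I$, the product $(0,j)(r,i) = (0, jr+ji)$ lies in $\rad(E(R,I))$ (radical is an ideal) and hence in $\rad(I)$. The proof of (1)$\Rightarrow$(3) is the mirror image, using $(r,i)(0,j) = (0,rj+ij)$.

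For (2)$\Rightarrow$(1), I would use the standard characterization that $x \in \rad(S)$ iff $1 + yx$ is a unit of $S$ for every $y \in S$, applied to $S = E(R,I)$. Given $(p,j) \in E(R,I)$, we have
\[ 1 + (p,j)(r,i) = (1+pr,\, pi + jr + ji). \]
Since $pr \in \rad(R)$, the element $1+pr$ is a unit in $R$; let $\alpha = (1+pr)^{-1}$. A direct computation shows that $(a,b) \in E(R,I)$ with $a$ a unit is a unit of $E(R,I)$ iff $a^{-1}b$ is quasi-regular in $I$, so I would reduce the problem to showing that $\alpha(pi + jr + ji)$ is quasi-regular in $I$. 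Using the $R$-rng axioms, this rewrites as
\[ \alpha(pi + jr + ji) = (\alpha p)i + (\alpha j)r + (\alpha j)i, \]
and by hypothesis (2) applied with $j$ replaced by $\alpha j \in I$, the last two summands together lie in $\rad(I)$. Since adding an element of $\rad(I)$ preserves quasi-regularity (a standard fact: $a+b$ is QR iff $a$ is QR, for $b \in \rad(I)$), it remains to handle the ``leftover'' term and verify it can be absorbed. The proof of (3)$\Rightarrow$(1) is the mirror, using right quasi-regularity and the identity $\alpha(rj+ij) = (\alpha r)j + (\alpha i)j \in \rad(I)$ obtained from hypothesis~(3).

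The ``Moreover'' statements fall out immediately: $(0,i) \in \rad(E(R,I))$ iff $i \in \rad(I)$ is just Lemma~\ref{radI}; and for $(r,0)$, applying the already-proved equivalence of (1) with both (2) and (3) yields $r \in \rad(R)$, $Ir \subseteq \rad(I)$ (from (2) with $i=0$), and $rI \subseteq \rad(I)$ (from (3) with $i=0$), i.e.\ $r \in \rad(R)$ and $rI + Ir \subseteq \rad(I)$.

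The main obstacle I expect is the quasi-regularity step in (2)$\Rightarrow$(1): while $(\alpha j)r + (\alpha j)i$ is in $\rad(I)$ directly from hypothesis~(2), the term $(\alpha p)i$ need not itself be in $\rad(I)$, so the argument must invoke the interplay between all three summands — or, alternatively, decompose $(p,j)(r,i) = (p,0)(r,i) + (0,j)(r,i)$ and exploit that $(0,j)(r,i) \in 0 \oplus \rad(I) \subseteq \rad(E(R,I))$ by Lemma~\ref{radI} together with hypothesis~(2), reducing the unit question modulo $\rad(E(R,I))$ to a cleaner statement about $(pr,pi)$. Getting the sign conventions and the left-versus-right quasi-regular distinctions right is where the bookkeeping is easiest to mishandle.
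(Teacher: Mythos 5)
Your handling of (1)$\Rightarrow$(2), (1)$\Rightarrow$(3), and the ``Moreover'' claims is correct, and arguably cleaner than the paper's: the paper verifies $r \in \rad(R)$ and $jr+ji \in \rad(I)$ by explicit left-invertibility computations, whereas you get the first from the surjection $E(R,I) \to R$ and the second from $(0,j)(r,i) = (0,jr+ji) \in I \cap \rad(E(R,I)) = \rad(I)$ via Lemma~\ref{radI}.

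The gap is in (2)$\Rightarrow$(1), exactly where you say you expect trouble, and it is a genuine one: neither of your two routes actually disposes of the leftover term. After the (valid) reduction to showing that $1+(p,0)(r,i) = (1+pr,\,pi)$ is left-invertible, equivalently that $(\alpha p)i$ is left quasi-regular with $\alpha=(1+pr)^{-1}$, you stop. This is not a formal consequence of $pr\in\rad(R)$ together with $(\alpha j)r+(\alpha j)i\in\rad(I)$: the only information about $i$ available is the coupled condition in (2), and it must be invoked with the specific choice $j=ip$, which yields $(ip)r+(ip)i = ipr+ipi \in \rad(I)$. (That some such input is indispensable is illustrated by Example~\ref{matrixeg2}, where $r\in\rad(R)$ alone does not put $(r,0)$ in the radical.) The paper's device is to left-multiply $1-(p,j)(r,i)$ not by $(s,0)$ but by $(s,spi)$; the extra $spi$ in the second coordinate is chosen precisely so that the troublesome $pi$ term cancels, and the second coordinate of the product collapses to $-s(jr+ji)-spi(jr+ji)-sp(ipr+ipi)$, each summand lying in $\rad(I)$ because $\rad(I)$ is an $R$-ideal of $I$ and both $jr+ji$ and $(ip)r+(ip)i$ are in $\rad(I)$ by hypothesis. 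In your sign convention the same trick reads $(s,-spi)(1+pr,\,pi) = \bigl(1,\,-sp((ip)r+(ip)i)\bigr) \in 1 + 0\oplus\rad(I) \subseteq 1+\rad(E(R,I))$, whence $(1+pr,pi)$ is left-invertible. Without this multiplier (or an equivalent use of the instance $j=ip$), the ``absorption'' of $(\alpha p)i$ does not happen, so as written your proofs of (2)$\Rightarrow$(1) and its mirror (3)$\Rightarrow$(1) are incomplete.
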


\begin{proof}
Suppose that $(r, i) \in \rad(E(R,I))$, and let $p \in R$ and $j, j' \in I$ be any elements.  Then $1 - (p, 0)(r, i) = (1 - pr, -pi)$ and $1 - (0, -j'j)(r, i) = (1, j'jr + j'ji)$ must be left-invertible. That is, $$(1, 0) = (s, k)(1 - pr, -pi) = (s(1 - pr), -spi + k(1 - pr) - kpi)$$ and $$(1, 0) = (t, l)(1, j'jr+j'ji) = (t, t(j'jr + j'ji) + l + l(j'jr + j'ji))$$ for some $(s, k), (t, l) \in E(R, I)$. In particular, $1 = s(1 - pr) \in R$, showing that $r \in \rad(R)$. Also, we must have $t = 1$, and therefore $0 = j'(jr+ji) + l + lj'(jr+ji)$. Thus, $jr+ji \in \rad(I)$ for all $j \in I$. Using the characterization of the Jacobson radical in terms of right-invertible elements, one can similarly show that $rj+ij \in \rad(I)$ for all $j \in I$. Therefore, (1) implies (2) and (3).

Now, suppose that $r \in \rad(R)$ and $i \in I$ are such that $jr+ji \in \rad(I)$ for all $j \in I$. Let $(p, j) \in E(R,I)$ be any element. To prove that (2) implies (1), we will show that $1 - (p,j)(r,i)$ is left-invertible. Since $r \in \rad(R)$, we can find $s \in R$ such that $1 = s(1-pr)$. Now, $$(s,spi)(1 - (p,j)(r,i)) = (s,spi)(1 - pr, - jr - pi - ji)$$ $$= (s(1-pr), -s(jr + pi + ji) + spi - spipr - spi(jr + pi + ji))$$ $$= (1, - s(jr + ji) -spi(jr + ji) - sp(ipr + ipi)) \in 1 + 0 \oplus \rad(I).$$ But, by the previous lemma, $0 \oplus \rad(I) \subseteq \rad(E(R,I))$, and hence an element of $1 + 0 \oplus \rad(I)$ must be left-invertible. It follows that $1 - (p,j)(r,i)$ is left-invertible as well.

That (1) follows from (3) can be shown by using a similar computation to deduce that in this case $(1-(r,i)(p,j))(s,ips) \in 1 + 0 \oplus \rad(I)$, and hence that  $1-(r,i)(p,j)$ is right-invertible. The final claim follows from the equivalence of (1), (2), and (3), and from the previous lemma.
\end{proof}

As an immediate consequence, we obtain the following.

\begin{corollary} \label{radcor}
Let $R$ be a ring, and let $I$ be an $R$-rng. Then the following are equivalent:
\begin{enumerate}
\item[$(1)$] $\rad(E(R,I)) = \rad(R) \oplus \rad(I)$,
\item[$(2)$] $I\rad(R) \subseteq \rad(I)$,
\item[$(3)$] $\rad(R)I \subseteq \rad(I).$
\end{enumerate}
In particular, if $\, \rad(R) = 0$, then $\, \rad(E(R,I)) = 0 \oplus \rad(I)$.
\end{corollary}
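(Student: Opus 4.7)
The plan is to read the statement directly off Theorem~\ref{radE} and Lemma~\ref{radI}, with the only subtlety being how to deduce $i \in \rad(I)$ from the conclusion of Theorem~\ref{radE}(2).

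First I would prove $(2) \Rightarrow (1)$. For the inclusion $\rad(R) \oplus \rad(I) \subseteq \rad(E(R,I))$, take $r \in \rad(R)$ and $i \in \rad(I)$. For any $j \in I$, hypothesis (2) gives $jr \in \rad(I)$, and $ji \in \rad(I)$ because $\rad(I)$ is an ideal of $I$; hence $jr+ji \in \rad(I)$, so $(r,i) \in \rad(E(R,I))$ by Theorem~\ref{radE}. For the reverse inclusion, suppose $(r,i) \in \rad(E(R,I))$. Then $r \in \rad(R)$ by Theorem~\ref{radE}, so $jr \in I\rad(R) \subseteq \rad(I)$ by (2); combined with $jr + ji \in \rad(I)$ this forces $ji \in \rad(I)$ for every $j \in I$. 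Since every element of $\rad(I)$ is left quasi-regular, this says that $ji$ is left quasi-regular for all $j \in I$, which is the definition of $i \in \rad(I)$. This last step is the only nontrivial move in the whole argument.

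Next I would prove $(1) \Rightarrow (2)$: if $\rad(E(R,I)) = \rad(R) \oplus \rad(I)$, then for any $r \in \rad(R)$ we have $(r,0) \in \rad(E(R,I))$, so Theorem~\ref{radE}(2) applied to $(r,0)$ yields $jr \in \rad(I)$ for every $j \in I$, i.e.\ $I\rad(R) \subseteq \rad(I)$. The equivalence $(1) \Leftrightarrow (3)$ is proved by the symmetric argument, using characterization (3) of Theorem~\ref{radE} in place of (2). Finally, the last sentence follows because $\rad(R)=0$ makes (2) hold trivially, whence (1) reduces to $\rad(E(R,I)) = 0 \oplus \rad(I)$.

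The only real obstacle is the first direction of $(2) \Rightarrow (1)$: one must notice that "$ji \in \rad(I)$ for all $j \in I$" combined with the quasi-regularity of radical elements is precisely the defining property of $i \in \rad(I)$. Everything else is a mechanical unwinding of Theorem~\ref{radE}.
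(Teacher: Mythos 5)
Your proof is correct and follows exactly the route the paper intends: the paper states the corollary as an immediate consequence of Theorem~\ref{radE} without writing out the details, and your argument is the natural unwinding of that theorem, with the one genuinely needed observation (that $ji \in \rad(I)$ for all $j$ implies $i \in \rad(I)$ via left quasi-regularity of radical elements) handled correctly.
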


Written in our notation, \cite[Theorem 6]{Haimo} says that if $R$ is a commutative ring and $I$ is an $R$-algebra such that $r(i - i^2) = 0$ for all $r \in \rad(R)$ and $i \in I$, then $\rad(E(R,I)) = \rad(R) \oplus \rad(I)$. This can be quickly deduced from the above corollary, since in this situation $\rad(R)I \subseteq \rad(I)$. For, given any $r \in \rad(R)$ and $i,j \in I$, choosing $p \in R$ so that $r + p + pr = 0$, one has $$j(ri) + jpi + (jpi)j(ri) = rji + pji + pr(ji)^2$$ $$= rji + pji + prji = (r + p + pr)ji = 0,$$ and hence $ri \in \rad(I)$.

\begin{corollary}
Let $R$ be a ring, and let $I$ be an $R$-rng that is finitely generated as a $($left\,$)$ $R$-module by elements that commute with all elements of $R$. Then $\, \rad(E(R,I)) = \rad(R) \oplus \rad(I)$.
\end{corollary}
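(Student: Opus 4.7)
My plan is to invoke Corollary~\ref{radcor}, which reduces the claim to showing the inclusion $\rad(R)I \subseteq \rad(I)$. By Lemma~\ref{centrallemma}, $\rad(R)I = I\rad(R)$ is an $R$-ideal, and in particular a two-sided ideal, of the rng $I$. Since $\rad(I) = \{i \in I : ji \text{ is left quasi-regular for all } j \in I\}$ and $\rad(R)I$ is already closed under multiplication by $I$, it suffices to prove that every element of $\rad(R)I$ is left quasi-regular in $I$.

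To produce quasi-inverses, I would fix generators $x_1,\ldots,x_n$ of $I$ as a left $R$-module, each commuting with all of $R$, and (arbitrarily) choose structure constants $c_{lm}^p \in R$ so that $x_l x_m = \sum_p c_{lm}^p x_p$. A typical element of $\rad(R)I$ has the form $j = \sum_l r_l x_l$ with $r_l \in \rad(R)$, and I would look for a left quasi-inverse of the same shape $k = \sum_l s_l x_l$, with the $s_l \in R$ to be determined. Using centrality of the $x_l$ together with the $R$-rng compatibility axioms, one computes $kj = \sum_p \bigl(\sum_{l,m} s_l r_m c_{lm}^p\bigr) x_p$. Arranging for each $x_p$-coefficient of $j+k+kj$ to vanish then reduces to the linear system $s(I_n + B) = -r$ in $R^n$, where $r = (r_p)$ and $s = (s_p)$ are viewed as row vectors and $B \in \M_n(R)$ has entries $B_{l,p} = \sum_m r_m c_{lm}^p$.

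Since each $r_m$ lies in $\rad(R)$ and $\rad(R)$ is an ideal, every entry of $B$ lies in $\rad(R)$, so $B \in \M_n(\rad(R)) = \rad(\M_n(R))$ (a standard identity), and $I_n + B$ is a unit in $\M_n(R)$. Taking $s := -r(I_n+B)^{-1}$ yields coefficients whose associated $k$ is a left quasi-inverse of $j$, whence $\rad(R)I$ consists of left quasi-regular elements and the argument closes. The main obstacle I foresee is not conceptual but bookkeeping: carefully unwinding $(s_l x_l)(r_m x_m)$ into $(s_l r_m)(x_l x_m)$ using the compatibility axioms and the centrality of $x_l$, so that the whole quasi-inverse problem collapses onto a single matrix equation over $R$ to which $\rad(\M_n(R)) = \M_n(\rad(R))$ can be applied.
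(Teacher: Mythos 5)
Your argument is correct, but it takes a different route from the paper's. The paper does not compute quasi-inverses inside $I$ at all: it cites the standard fact (\cite[Proposition 5.7]{Lam}) that for a ring homomorphism $f: R \rightarrow S$ with $S = f(R)x_1 + \dots + f(R)x_n$ and the $x_i$ centralizing $f(R)$, one has $f(\rad(R)) \subseteq \rad(S)$; applying this to the inclusion $R \hookrightarrow E(R,I)$ gives $\rad(R) \oplus 0 \subseteq \rad(E(R,I))$, and Theorem~\ref{radE} then forces $\rad(R)I + I\rad(R) \subseteq \rad(I)$ and hence the desired equality. You instead reduce via Corollary~\ref{radcor} to the inclusion $\rad(R)I \subseteq \rad(I)$ and prove it by hand: Lemma~\ref{centrallemma} makes $\rad(R)I$ a two-sided ideal of $I$, so by the paper's definition of $\rad(I)$ it suffices that its elements be left quasi-regular, and your structure-constant computation correctly collapses the quasi-inverse equation to $s(I_n+B) = -r$ with $B \in \M_n(\rad(R)) = \rad(\M_n(R))$, whence $I_n + B$ is a unit. (The centrality of the $x_l$ is used exactly where you flag it, to rewrite $(s_l x_l)(r_m x_m)$ as $(s_l r_m)(x_l x_m)$, and it is enough that the coefficients of $j+k+kj$ in the chosen generators vanish, independence not being needed.) What the paper's approach buys is brevity, by outsourcing the work to a known result on finite centralizing extensions (whose usual proof runs through Nakayama's lemma on simple modules); what yours buys is a self-contained, purely computational verification that never leaves $I$ and only invokes the elementary identity $\rad(\M_n(R)) = \M_n(\rad(R))$.
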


\begin{proof}
We begin by recalling a standard fact from ring theory. Let $R$ and $S$ be two rings, and let $f : R \rightarrow S$ be a ring homomorphism, such that $S = f(R)x_1 + f(R)x_2  + \dots + f(R)x_n$ for some $x_1, x_2, \dots, x_n \in S$ that commute with all elements of $f(R)$. Then $f(\rad(R)) \subseteq \rad(S)$ (cf.\ \cite[Proposition 5.7]{Lam}). 

Now, let $R$ and $I$ be as in the statement, set $S = E(R,I)$, and let $f : R \rightarrow S$ be the natural inclusion. By hypothesis, we can write $S = R \oplus (Rx_1 + Rx_2 + \dots + Rx_n)$ for some $x_1, x_2, \dots, x_n \in I$ that commute with all elements of $R$. Hence, by the previous paragraph, $\rad(R) \subseteq \rad(E(R,I))$. Finally, by Theorem~\ref{radE}, $\rad(E(R,I)) = \rad(R) \oplus \rad(I)$.
\end{proof}

We finish the section with two examples of rings $E(R, I)$ where $\rad(E(R,I)) \neq \rad(R) \oplus \rad(I)$.

\begin{example} \label{matrixeg}
Let $F$ be a field, let $R = \mathbb{T}_2(F)$, the ring of upper-triangular $2 \times 2$ matrices over $F$, and let $I = \M_2(F)$, the ring of all $2 \times 2$ matrices over $F$. Set $$K = \Big\{ \Big(\Big(\begin{array}{ll}
0 & a \\
0 & 0
\end{array}\Big),
\Big(\begin{array}{rr}
0 & -a \\
0 & 0
\end{array}\Big)\Big) : a \in F \Big\}.$$
It is easy to see that $K$ is an ideal of $E(R,I)$ and that $K^2 = 0$. (This can be done directly, or by using Proposition~\ref{idealdescription}, with $$A = J =  \Big\{\Big(\begin{array}{ll}
0 & a \\
0 & 0
\end{array}\Big) : a \in F \Big\},$$ $Z = 0$, and $\varphi$ the identity map.) Since $$\rad(R) =  \Big\{\Big(\begin{array}{ll}
0 & a \\
0 & 0
\end{array}\Big) : a \in F \Big\},$$ and since $\rad(I) = 0$ ($I$ is a simple ring), it follows from Theorem~\ref{radE} that $\rad(E(R,I)) = K$. Clearly $\rad(E(R,I)) \neq \rad(R) \oplus \rad(I)$.
\end{example}

Let us now give a sketch of a similar but more interesting example, in which the set of first coordinates of $\rad(E(R,I))$ is a proper subset of $\rad(R)$ and $I$ does not have a unit.

\begin{example} \label{matrixeg2}
Let $F$ be a field, let $V$ be a countably infinite-dimensional $F$-vector space, and let $\End_F (V)$ denote the endomorphism ring of $V$. We identify $\End_F (V)$ with the ring of row-finite matrices over $F$. Now, let $R \subseteq \End_F (V)$ be the subring of all upper-triangular matrices, and let $I \subseteq \End_F (V)$ be the subrng of all matrices having only finitely many nonzero entries. It is easy to verify that $I$ is an $R$-rng and that $\rad(R)$ is the subset of all strictly upper-triangular matrices (i.e., ones with zeros everywhere on the main diagonal). Also, $I$ has no ideals other than $0$ and $I$, so it can be quickly seen that $\rad(I) = 0$ (e.g., the matrix which has $-1$ in upper left corner and zeros elsewhere cannot be left quasi-regular). Hence, by Theorem~\ref{radE}, $(r, i) \in \rad(E(R,I))$ if and only if $r \in \rad(R)$ and $rj = -ij$ for all $j \in I$, which can only happen if $r = -i$ (as elements of $\End_F (V)$). Therefore $\rad(E(R,I)) = \{(r, -r) : r \in \rad(R) \cap I\}$ (i.e., the set of $(r, -r)$, where $r$ is strictly upper-triangular and has only finitely many nonzero entries). 
\end{example}

\section{The upper nil radical} \label{nilradsection}

We recall that the {\em upper nil radical} of a rng $I$, denoted $\Nil(I)$, is the sum of all the nil ideals of $I$. This is an ideal of $I$ which can also be described as $\{i \in I : \langle i \rangle \text{ is nil} \}$ (where $\langle i \rangle$ denotes the ideal generated by $i$). The goal of this section is to describe $\Nil(E(R,I))$ for arbitrary $R$ and $I$. At first glance, it might seem natural to work with nil $R$-ideals of $I$ for this purpose, rather than nil ideals of $I$, since the former are ideals of $E(R,I)$, while the latter are not. However, the next lemma shows that nil $R$-ideals and nil ideals are, in some sense, interchangeable.

\begin{lemma} \label{R-ideal-ideal}
Let $R$ be a ring, let $I$ be an $R$-rng, and let $J$ be an ideal of $I$. Also, let $K$ be the $R$-ideal of $I$ generated by $J$. Then $K = RJR$ and $K^3 \subseteq J$.

In particular, $J$ is nil $($respectively, nilpotent\,$)$ if and only if $K$ is nil $($respectively, nilpotent\,$)$. Also, $\,\Nil(I)$ is an $R$-ideal of $I$.
\end{lemma}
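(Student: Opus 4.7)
The plan is to first identify $K$ explicitly as $RJR$, then prove $K^3 \subseteq J$ by a regrouping trick, and finally derive the nil/nilpotent equivalence and the structure of $\Nil(I)$ as immediate corollaries.

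First I would verify $K = RJR$. Since $R$ is unital, $J = 1 \cdot J \cdot 1 \subseteq RJR$, and $RJR$ is plainly an $(R,R)$-subbimodule of $I$. To see it is also a rng ideal of $I$, I would use the $R$-rng axioms to compute, for $i \in I$, $r, r' \in R$, $j \in J$,
\[
i(rjr') = ((ir)j)r', \qquad (rjr')i = r(j(r'i)),
\]
and note that $(ir)j \in IJ \subseteq J$ and $j(r'i) \in JI \subseteq J$ since $J$ is an ideal of $I$. Hence $I \cdot RJR \subseteq JR \subseteq RJR$ and $RJR \cdot I \subseteq RJ \subseteq RJR$, so $RJR$ is an $R$-ideal of $I$ containing $J$. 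Conversely, any $R$-ideal containing $J$ is closed under the bimodule action and thus contains every $rjr'$, so $K = RJR$.

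The central step is $K^3 \subseteq J$, and the obstacle is finding the right way to regroup. Given $k_1, k_2, k_3 \in K$, I would expand \emph{only} the middle factor using $K = RJR$, writing $k_2 = \sum_\alpha r_\alpha j_\alpha s_\alpha$ with $r_\alpha, s_\alpha \in R$ and $j_\alpha \in J$. The $R$-rng axioms then let the scalars $r_\alpha$ and $s_\alpha$ migrate outward onto $k_1$ and $k_3$:
\[
k_1 k_2 k_3 \;=\; \sum_\alpha (k_1 r_\alpha)\, j_\alpha\, (s_\alpha k_3).
\]
Each summand has the form $i \cdot j_\alpha \cdot i'$ with $i, i' \in I$ and $j_\alpha \in J$, and so lies in $IJI \subseteq J$ by the ideal property. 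The subtlety is precisely this choice of regrouping: if one instead expands all three factors as $(r_1 j_1 s_1)(r_2 j_2 s_2)(r_3 j_3 s_3)$ and tries to simplify directly, the outermost scalars $r_1$ and $s_3$ persist as bimodule factors, yielding only $K^3 \subseteq RJR = K$.

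The remaining claims follow quickly. Since $J \subseteq K$, $K$ nil (resp.\ nilpotent) implies $J$ nil (resp.\ nilpotent). Conversely, if $J^n = 0$ then $K^{3n} \subseteq (K^3)^n \subseteq J^n = 0$; and if $J$ is nil then for any $k \in K$, $k^3 \in J$ is nilpotent, so $k$ itself is. For the final assertion, $\Nil(I)$ is a nil ideal of $I$ by definition; letting $K'$ denote the $R$-ideal of $I$ generated by $\Nil(I)$, what we have just proved shows that $K'$ is nil, hence $K' \subseteq \Nil(I)$ by the defining property of the upper nil radical. Since $\Nil(I) \subseteq K'$ trivially, $\Nil(I) = K'$ is an $R$-ideal.
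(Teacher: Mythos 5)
Your proof is correct and follows essentially the same route as the paper: identify $K$ as $RJR$ using the $R$-rng axioms and unitality, obtain $K^3 \subseteq IJI \subseteq J$ by absorbing the outer scalars of the middle factor into the neighboring elements of $I$ (the paper writes this compactly as $K^3 \subseteq I(RJR)I \subseteq J$), and then deduce the nil/nilpotent equivalence and that the $R$-ideal generated by $\Nil(I)$ is nil, hence equal to $\Nil(I)$.
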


\begin{proof}
First, we note that $I(RJR) = (IR)JR = IJR \subseteq JR \subseteq RJR$, and similarly $(RJR)I \subseteq RJR$. This implies that $RJR$ is an ideal of $I$, and hence an $R$-ideal. Since $RJR \subseteq K$, the two must in fact be equal. Also, $K^3 \subseteq IKI = I(RJR)I \subseteq J$. The remaining claims are now immediate, since the $R$-ideal of $I$ generated by $\Nil(I)$ must be nil and hence contained in $\Nil(I)$.
\end{proof}

In order to describe $\Nil(E(R,I))$, we need one more easy lemma.

\begin{lemma}
Let $R$ be a ring, let $I$ be an $R$-rng, let $(r, i) \in E(R,I)$ be any element, and let $n$ be a positive integer. Then $(r^n, jr + ji + r^{n-1}i) = (r,i)^n = (r^n, rk + ik + ir^{n-1})$ for some $j, k \in I$.
\end{lemma}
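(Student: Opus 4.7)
The plan is an easy induction on $n$, obtaining the two equalities from the two ways of factoring $(r,i)^n = (r,i)^{n-1}(r,i) = (r,i)(r,i)^{n-1}$ and appealing to associativity of multiplication in $E(R,I)$. A preliminary induction first shows that $(r,i)^n = (r^n, x_n)$ for some $x_n \in I$, since the multiplication rule of Definition~\ref{defidealext} preserves the first coordinate as an ordinary product in $R$. The base case $n = 1$ of the lemma itself is then settled by taking $j = k = 0$, interpreting $r^{0} = 1_R$ so that $r^{n-1}i = i$.

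For the inductive step, assuming $(r,i)^{n-1} = (r^{n-1}, x_{n-1})$ for some $x_{n-1} \in I$, I would simply apply Definition~\ref{defidealext} on each side. Associating to the left gives
\[(r,i)^{n-1}(r,i) \;=\; (r^{n-1}, x_{n-1})(r, i) \;=\; (r^n,\, x_{n-1}r + x_{n-1}i + r^{n-1}i),\]
which is of the first stated form with $j = x_{n-1}$. Associating to the right gives
\[(r,i)(r,i)^{n-1} \;=\; (r, i)(r^{n-1}, x_{n-1}) \;=\; (r^n,\, rx_{n-1} + ix_{n-1} + ir^{n-1}),\]
which is of the second stated form with $k = x_{n-1}$. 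These two computed values of $x_n$ necessarily agree, by associativity of multiplication in $E(R,I)$.

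There is no serious obstacle here; every step is a direct application of Definition~\ref{defidealext} together with a single inductive hypothesis, and no use is made of the $R$-rng compatibility axioms beyond the bimodule structure already packaged into the multiplication rule. The only minor point worth flagging is that one and the same element $x_{n-1}$ serves as both $j$ and $k$ (though the lemma does not demand this), so the statement really expresses the two symmetric ways of splitting off the first or last factor of the unique second coordinate $x_n$.
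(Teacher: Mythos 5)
Your proof is correct and follows essentially the same route as the paper: an induction that peels one factor $(r,i)$ off the product and reads off the second coordinate from the multiplication rule, the base case being $j=k=0$. The only cosmetic difference is that you carry the weaker hypothesis $(r,i)^{n-1}=(r^{n-1},x_{n-1})$ and obtain both stated forms from the two factorizations $(r,i)^{n-1}(r,i)$ and $(r,i)(r,i)^{n-1}$, whereas the paper proves the first form by induction (taking the new $j$ to be the previous second coordinate) and gets the second by symmetry; both are fine.
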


\begin{proof}
We will only prove the first equality, since the second follows by symmetry. We proceed by induction on $n$. The claim clearly holds for $n = 1$ (with $j = 0$). Assuming that $(r,i)^n = (r^n, jr + ji + r^{n-1}i)$ for some $n \geq 1$, we have $$(r,i)^{n+1} = (r,i)^n(r,i) = (r^{n+1}, (jr + ji + r^{n-1}i)r + (jr + ji + r^{n-1}i)i + r^ni),$$ as desired.
\end{proof}

\begin{theorem} \label{nilE} 
Let $R$ be a ring, let $I$ be an $R$-rng, and let $r \in R$, $i \in I$ be any elements. Then the following are equivalent:
\begin{enumerate}
\item[$(1)$] $(r, i) \in \Nil(E(R,I))$,
\item[$(2)$] $r \in \Nil(R)$ and $jr+ji \in \Nil(I)$ for all $j \in I$,
\item[$(3)$] $r \in \Nil(R)$ and $rj+ij \in \Nil(I)$ for all $j \in I$.
\end{enumerate}
Moreover, $(r, 0) \in \Nil(E(R,I))$ if and only if $r \in \{r \in \Nil(R) : rI + Ir \subseteq \Nil(I)\}$, and $(0, i) \in \Nil(E(R,I))$ if and only if $i \in \Nil(I)$.
\end{theorem}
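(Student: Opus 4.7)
I plan to mirror the structure of Theorem~\ref{radE}, establishing the equivalences $(1)\Leftrightarrow(2)$ and $(1)\Leftrightarrow(3)$ separately (so that the equivalence of $(2)$ and $(3)$ is a byproduct), and then deriving the ``moreover'' claims by specialization. The implication $(1)\Rightarrow(2)$ is the easy direction: the projection $\pi\colon E(R,I)\to R$, $(r,i)\mapsto r$, is a surjective ring homomorphism that carries nil ideals to nil ideals, so $r\in\Nil(R)$. For the second half, $(0,jr+ji)=(0,j)(r,i)\in\Nil(E(R,I))$ for every $j\in I$, and the rng ideal of $jr+ji$ in $I$ embeds via $x\mapsto(0,x)$ into the $E(R,I)$-ideal generated by $(0,jr+ji)$; since that $E(R,I)$-ideal is nil, every element of the rng ideal is nilpotent, so the rng ideal is nil and $jr+ji\in\Nil(I)$. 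The implication $(1)\Rightarrow(3)$ is entirely symmetric, using $(r,i)(0,j)=(0,rj+ij)$.

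The heart of the proof is $(2)\Rightarrow(1)$. I introduce
\[
N \,=\, \{(r,i)\in E(R,I) : r\in\Nil(R)\text{ and }jr+ji\in\Nil(I)\text{ for all }j\in I\}.
\]
A direct computation, using that $\Nil(R)$ is an ideal of $R$ and (by Lemma~\ref{R-ideal-ideal}) that $\Nil(I)$ is an $R$-ideal of $I$, shows that $N$ is an ideal of $E(R,I)$. The main obstacle is then to prove that $N$ is nil; once this is done, $N\subseteq\Nil(E(R,I))$ is automatic. To see that $N$ is nil, fix $(r,i)\in N$ and choose $m$ with $r^m=0$. The quotient map $I\to\bar I=I/\Nil(I)$ induces a ring homomorphism $E(R,I)\to E(R,\bar I)$ with kernel $0\oplus\Nil(I)$. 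Specializing condition (2) to $j=i$ yields $ir+i^2\in\Nil(I)$, i.e., $\bar ir+\bar i^2=0$ in $\bar I$, and an easy induction on $n$ then gives $(r,\bar i)^n=(r^n,r^{n-1}\bar i)$ in $E(R,\bar I)$; the inductive step is
\[
(r,\bar i)^{n+1}=(r^n,r^{n-1}\bar i)(r,\bar i)=(r^{n+1},\,r^n\bar i+r^{n-1}(\bar ir+\bar i^2))=(r^{n+1},r^n\bar i).
\]
Setting $n=m+1$ gives $(r,\bar i)^{m+1}=0$, so $(r,i)^{m+1}=(0,z)$ for some $z\in\Nil(I)$; choosing $t$ with $z^t=0$ yields $(r,i)^{(m+1)t}=(0,z^t)=0$, so $(r,i)$ is nilpotent and $N$ is nil.

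The implication $(3)\Rightarrow(1)$ is proved by the symmetric argument, using the relation $\bar i^2+r\bar i=0$ in $\bar I$ and the formula $(r,\bar i)^n=(r^n,\bar ir^{n-1})$. Finally, the ``moreover'' statements follow by specialization: for $i=0$, combining $(1)\Leftrightarrow(2)$ and $(1)\Leftrightarrow(3)$ gives $(r,0)\in\Nil(E(R,I))$ iff $r\in\Nil(R)$ with $Ir,rI\subseteq\Nil(I)$, matching the stated set. For $r=0$, one checks that $\langle(0,i)\rangle_{E(R,I)}=0\oplus\tilde J$, where $\tilde J$ is the $R$-ideal of $I$ generated by $i$; Lemma~\ref{nilideals} then reduces nilness of this ideal to nilness of $\tilde J$, which by Lemma~\ref{R-ideal-ideal} is equivalent to nilness of $\langle i\rangle_I$, i.e., to $i\in\Nil(I)$.
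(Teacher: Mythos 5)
Your proof is correct and follows the same overall skeleton as the paper's: the easy direction via the product identities $(0,j)(r,i)=(0,jr+ji)$ and $(r,i)(0,j)=(0,rj+ij)$, and the hard direction by introducing the same ideal $N=\{(r,i): r\in\Nil(R),\ jr+ji\in\Nil(I)\ \forall j\in I\}$, verifying it is an ideal using that $\Nil(I)$ is an $R$-ideal (Lemma~\ref{R-ideal-ideal}), and then showing every element of $N$ is nilpotent. The one place you genuinely diverge is in that last step: the paper proves a standalone lemma giving $(r,i)^n=(r^n,\,jr+ji+r^{n-1}i)$ for some $j\in I$ and then invokes the full hypothesis for that $j$, whereas you pass to $E(R,\bar I)$ with $\bar I=I/\Nil(I)$ and use only the single instance $ir+i^2\in\Nil(I)$ to get the cleaner formula $(r,\bar i)^n=(r^n,r^{n-1}\bar i)$; your cross-term computation $r^{n-1}(\bar i r+\bar i^2)=0$ is valid by the bimodule and compatibility axioms, and the quotient map $E(R,I)\to E(R,\bar I)$ is well defined precisely because $\Nil(I)$ is an $R$-ideal. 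Your variant is slightly more economical (it isolates exactly which part of hypothesis (2) drives nilpotence, namely the case $j=i$), at the cost of needing the quotient construction; the paper's version keeps everything inside $E(R,I)$. Your handling of the ``moreover'' claims, including the direct identification of $\langle(0,i)\rangle$ with $0\oplus\tilde J$ and the appeal to Lemmas~\ref{nilideals} and~\ref{R-ideal-ideal}, is also sound, if a bit more roundabout than the paper's one-line deduction.
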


\begin{proof}
Suppose that $(r, i) \in \Nil(E(R,I))$. Then the set of first coordinates of $\langle (r, i) \rangle$ is the ideal of $R$ generated by $r$. Since $\langle (r, i) \rangle$ is nil, this implies that $r \in \Nil(R)$. Now, let $j \in I$ be any element. Then $(0,j)(r,i) = (0,jr+ji)$, and hence $\langle (0,jr+ji) \rangle \subseteq \langle (r,i) \rangle$. This shows that the $R$-ideal of $I$ generated by $jr+ji$ is nil, and therefore that the ideal of $I$ generated by  $jr+ji$ is nil. That is, $jr+ji \in \Nil(I)$. Similarly, $(r,i)(0,j) = (0,rj+ij)$ implies that $rj+ij \in \Nil(I)$. Thus (1) implies (2) and (3).

Now, let us set $$N = \{(r,i) : r \in \Nil(R) \text{ and } jr+ji \in \Nil(I) \text{ for all } j \in I \}.$$ It is easy to see that $N$ is an ideal of $E(R,I)$. (Let $k \in I$ be an arbitrary element. Then $(r,i)(p,j) = (rp, ip + rj + ij)$ and $$k(rp) + k(ip + rj + ij) = (kr + ki)p + (kr + ki)j.$$ Using the fact that $\Nil(R) \subseteq R$ is an ideal and $\Nil(I) \subseteq I$ is an $R$-ideal (by Lemma~\ref{R-ideal-ideal}), these equalities show that $N$ is closed under multiplication by elements of $E(R,I)$ both on the left and on the right (first assuming that $(p,j) \in N$ and then that $(r,i) \in N$). Also, if $(r,i), (p,j) \in N$, then $(r,i) + (p,j) = (r+p, i+j)$, and for all $k \in I$, we have $k(r+p) + k(i+j) = (kr + ki) + (kp + kj) \in \Nil(I)$. This shows that $N$ is closed under addition as well.) Next, we note that every element of $N$ is nilpotent. For, let $(r,i) \in N$, and let $n$ be a positive integer such that $r^n = 0$. Then $(r,i)^{n+1} = (0,jr + ji)$ for some $j \in I$, by the previous lemma. By hypothesis, this is an element of $\Nil(I)$ and hence nilpotent. It follows that $N \subseteq \Nil(E(R,I))$, which shows that (2) implies (1).

A similar argument, with $$\{(r,i) : r \in \Nil(R) \text{ and } rj+ij \in \Nil(I) \text{ for all } j \in I \}$$ in place of $N$, shows that (3) implies (1). The final claim follows from the equivalence of (1), (2), and (3), and also from the fact that $\Nil(I)$ is an $R$-ideal (so the $R$-ideal generated by an element of $\Nil(I)$ must be nil).
\end{proof}

One can, of course, deduce the obvious analogue of Corollary~\ref{radcor} for upper nil radicals from this result. Also, in Examples~\ref{matrixeg} and~\ref{matrixeg2} one can find rings $E(R,I)$ such that $\Nil(E(R,I)) \neq \Nil(R) \oplus \Nil(I)$, since in those examples $\rad(R) = \Nil(R)$, $\rad(I) = \Nil(I)$, and $\rad(E(R,I)) = \Nil(E(R,I))$.

\section{Semiprimeness} \label{semiprimesection}

In this section we will determine when a ring of the form $E(R,I)$ is semiprime and then discuss some special cases. We begin with a quick but useful lemma.

\begin{lemma} \label{multinjective}
Let $R$ be a ring, and let $I$ be a semiprime $R$-rng. Also, let $J$ be a nonzero $R$-subrng of $I$, and let $\varphi : J \rightarrow R$ be an $R$-homomorphism such that for all $i \in I$, $j \in J$ one has $ij = i\varphi(j)$, $ji = \varphi(j)i$. Then $\varphi$ must be injective.
\end{lemma}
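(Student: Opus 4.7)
The plan is to show that $\ker\varphi$ is an ideal of $I$ whose square is zero, and then invoke the semiprimeness of $I$ to conclude it must be trivial.

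First I would unpack what it means for $j$ to lie in $\ker\varphi$. By the two hypothesized identities, for every $i \in I$ and every $j \in \ker\varphi$ we have
\[
ij = i\varphi(j) = 0 \quad\text{and}\quad ji = \varphi(j)i = 0,
\]
so $\ker\varphi \cdot I = I \cdot \ker\varphi = 0$. Next I would observe that $\ker\varphi$ is an $(R,R)$-subbimodule of $J$ (since $\varphi$ is an $R$-homomorphism) and that it is closed under multiplication by elements of $I$ on either side (trivially, by the equations above). Hence $\ker\varphi$ is an ideal of the rng $I$; in fact it is even an $R$-ideal of $I$.

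Now the key step: from $\ker\varphi \cdot I = 0$ we immediately get $(\ker\varphi)^2 \subseteq \ker\varphi \cdot I = 0$. Thus $\ker\varphi$ is a nilpotent (indeed square-zero) ideal of $I$. Since $I$ is semiprime, it contains no nonzero nilpotent ideals, so $\ker\varphi = 0$, i.e., $\varphi$ is injective.

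I do not anticipate any real obstacle: the hypotheses $ij = i\varphi(j)$ and $ji = \varphi(j)i$ are tailor-made to force elements of $\ker\varphi$ to annihilate all of $I$, and once one has a square-zero ideal in a semiprime rng, the conclusion is immediate. The hypothesis that $J$ is nonzero plays no role in the argument (if $J = 0$ then $\varphi$ is injective vacuously); it is presumably included to indicate that the lemma is only interesting in the nondegenerate case.
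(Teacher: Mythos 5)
Your proof is correct and follows essentially the same route as the paper: both arguments observe that the identities $ij = i\varphi(j)$ and $ji = \varphi(j)i$ force every element of $\ker(\varphi)$ to annihilate $I$ on both sides, and then appeal to semiprimeness to kill a square-zero ideal (the paper uses the slightly larger ideal $\ann_I(I) \supseteq \ker(\varphi)$, while you work with $\ker(\varphi)$ directly, but this is an immaterial difference).
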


\begin{proof}
Suppose that $\varphi(j) = 0$ for some $j \in J$. Then $jI = Ij = 0$, and hence $j \in \ann_I(I) = \{i \in I : iI = Ii = 0\}$, which is an ideal of $I$ that has square zero. 
Since $I$ is semiprime, $ \ann_I(I) = 0$, showing that $\varphi$ is injective.
\end{proof}

The next result is a generalization of \cite[Proposition 3.2]{DM} to arbitrary $R$-rngs.

\begin{theorem}  \label{semiprime}  Let $R$ be a ring, and let $I$ be an $R$-rng. Then $E(R,I)$ is semiprime if and only if the following three conditions hold:
\begin{enumerate}
\item[$(1)$] $I$ is a semiprime rng,
\item[$(2)$] there are no nonzero ideals $A$ of $R$ such that $A^2 = 0$ and $A \subseteq \ann_R(I)$, and 
\item[$(3)$] there do not exist a nonzero $R$-subrng $J \subseteq I$ and an {\em injective} $R$-homomorphism $\varphi : J \rightarrow R$ such that $J^2 = 0$, and for all $i \in I$, $j \in J$ one has $ij = i\varphi(j)$, $ji = \varphi(j)i$.
\end{enumerate}
Moreover, the above statement also holds if $\, (3)$ is replaced with
\begin{enumerate}
\item[$(3')$] there do not exist a nonzero $R$-subrng $J \subseteq I$ and an $R$-homomorphism $\varphi : J \rightarrow R$ such that $J^2 = 0$, and for all $i \in I$, $j \in J$ one has $ij = i\varphi(j)$, $ji = \varphi(j)i$.
\end{enumerate}
In particular, if $R$ and $I$ are semiprime, then $E(R,I)$ is semiprime. 
\end{theorem}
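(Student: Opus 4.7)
The strategy is to use the standard characterization that $E(R,I)$ is semiprime if and only if it contains no nonzero nilpotent ideal (equivalently, no nonzero ideal with square zero), together with the parametrization of ideals from Proposition~\ref{idealdescription} and the translation of nilpotence provided by Lemma~\ref{nilideals}. Throughout I will write a general ideal $K\subseteq E(R,I)$ via its data $(A, J, Z, \varphi)$.

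For necessity, I would construct a witnessing nonzero nilpotent ideal from the failure of each of (1), (2), (3). If $I$ contains a nonzero ideal $J_0$ with $J_0^2=0$, then Lemma~\ref{R-ideal-ideal} gives that the $R$-ideal $RJ_0R\supseteq J_0$ is nonzero and nilpotent, and Lemma~\ref{dirsumideal}(1) promotes it to a nonzero nilpotent ideal $0\oplus RJ_0R$ of $E(R,I)$. If (2) fails via some $A$, then $A\oplus 0$ is a nonzero square-zero ideal by Lemma~\ref{dirsumideal}(2). If (3) fails via $(J,\varphi)$, I would check that the ``graph'' $K:=\{(\varphi(j),-j):j\in J\}$ is an ideal of $E(R,I)$ using Proposition~\ref{idealdescription} with $A=\varphi(J)$, $Z=0$, $\ker\varphi=0$; conditions (a) and (b) are immediate from the identities $ji=\varphi(j)i$ and $ij=i\varphi(j)$. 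A direct multiplication shows $K^2=0$: the first coordinate is $\varphi(j)\varphi(j')=\varphi(jj')=0$, and the second is $-j\varphi(j')-\varphi(j)j'+jj'=-jj'-jj'+jj'=0$ (using the two identities to move $\varphi$'s across products of elements of $J$, together with $J^2=0$).

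For sufficiency, assume (1), (2), (3) and suppose for contradiction some nonzero $K\subseteq E(R,I)$ satisfies $K^2=0$, with data $(A,J,Z,\varphi)$. Reading off first coordinates of $K\cdot K$ gives $A^2=0$, and since $Z\oplus\ker\varphi\subseteq K$ by Proposition~\ref{idealdescription}(4), we get $(\ker\varphi)^2=0$. Condition (1) forces $\ker\varphi=0$, so $\varphi$ is injective; Proposition~\ref{idealdescription}(3) then gives $Z\subseteq\ann_R(I)$, and combined with $Z^2\subseteq A^2=0$ condition (2) forces $Z=0$. The crucial step — and the main obstacle — is passing from $K^2=0$ to $J^2=0$: here I would use that $\varphi$ is a ring homomorphism together with the already-established $A^2=0$ to compute $\varphi(jj')=\varphi(j)\varphi(j')\in A^2=0$, whence injectivity of $\varphi$ yields $jj'=0$. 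Now if $J=0$ then $K=A\oplus 0$ with $A\subseteq\ann_R(I)$ (Proposition~\ref{idealdescription}(1)) and $A^2=0$, and since $K\neq 0$ this contradicts (2); otherwise $J\neq 0$, conditions (a), (b) (with $\ker\varphi=0$) reduce exactly to the two identities in (3), and the triple $(J,\varphi,J^2=0)$ contradicts (3).

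For the equivalence of (3) and (3'), (3') trivially implies (3), and for the converse under (1), Lemma~\ref{multinjective} shows that any $\varphi$ witnessing failure of (3') is automatically injective, hence also witnesses failure of (3). For the final ``in particular'' clause: semiprimeness of $R$ makes (2) vacuous; semiprimeness of $I$ gives (1), and any candidate $(J,\varphi)$ for (3') would yield, via Lemma~\ref{multinjective}, an injective $\varphi$ with $\varphi(J)\subseteq R$ an ideal satisfying $\varphi(J)^2=\varphi(J^2)=0$, whence semiprimeness of $R$ forces $\varphi(J)=0$ and hence $J=0$, verifying (3').
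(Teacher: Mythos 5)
Your proposal is correct and follows essentially the same route as the paper: necessity by exhibiting a square-zero ideal of $E(R,I)$ from the failure of each condition (using Lemmas~\ref{R-ideal-ideal} and~\ref{dirsumideal} and the graph ideal $\{(\varphi(j),-j)\}$), and sufficiency by taking a nonzero square-zero ideal $K$ with data $(A,J,Z,\varphi)$ and successively forcing $\ker\varphi=0$, $Z=0$, and $J^2=0$ to contradict (3). The only (harmless) deviations are that you deduce $J^2=0$ from $\varphi(jj')\in A^2=0$ plus injectivity rather than from the second coordinate of $K^2=0$, and you dispose of the case $J=0$ via condition (2) rather than via surjectivity of $\varphi$.
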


\begin{proof} 
Suppose that $I$ is not semiprime. Then we can find some nonzero ideal $J \subseteq I$ such that $J^2 = 0$. Let $K$ be the (nonzero) $R$-ideal generated by $J$. By Lemma~\ref{dirsumideal}, $0\oplus K$ is an ideal of $E(R,I)$, and, by Lemma~\ref{R-ideal-ideal}, it is nilpotent. This implies that $E(R,I)$ is not semiprime.

Next, suppose that there is a nonzero ideal $A$ of $R$ such that $A^2 = 0$ and $A \subseteq \ann_R(I)$. By Lemma~\ref{dirsumideal}, $A \oplus 0$ is an ideal of $E(R,I)$, and clearly $(A \oplus 0)^2 = 0$. Therefore, $E(R, I)$ is again not semiprime.

Now, suppose that there is a nonzero $R$-subrng $J \subseteq I$ and an injective $R$-homomorphism $\varphi : J \rightarrow R$ such that $J^2 = 0$, and for all $i \in I$, $j \in J$ one has $ij = i\varphi(j)$, $ji = \varphi(j)i$. Let $K = \{(a, -j) : j \in J, a = \varphi(j)\}$. Then $K$ is an ideal of $E(R,I)$, by Proposition~\ref{idealdescription} (with $A = \varphi(J)$ and $Z = 0$). Since $J \neq 0$, $K$ is also nonzero. On the other hand, since $J^2 = 0$, we have $K^2 = 0$, and hence $E(R,I)$ is not semiprime once more.

We have shown that if $E(R,I)$ is semiprime, then the conditions (1), (2), and (3) must hold. For the converse, let us assume (1) and (2), and show that if $E(R,I)$ is not semiprime, then (3) must be false.  Thus, let $K$ be a nonzero ideal of $E$ such that $K^2 = 0$. By Proposition~\ref{idealdescription}, we can write $K = \{(a, -j): a \in A, j \in J, a+Z = \varphi(j)\}$, where $Z \subseteq A$ are ideals of $R$,  $J$ is an $R$-subrng of $I$, and $\varphi: J \rightarrow A/Z$ is a surjective $R$-homomorphism satisfying conditions (a) and (b) of the proposition. Now, $0 \oplus \ker(\varphi) \subseteq K$, and hence $\ker(\varphi)^2 = 0$. But, by the Proposition~\ref{idealdescription}, $\ker(\varphi)$ is an $R$-ideal of $I$, and hence $\ker(\varphi) = 0$, since we have assumed that $I$ is semiprime.  Thus, $\varphi$ is an injective $R$-homomorphism.

Now, since $\varphi$ is injective, by Proposition~\ref{idealdescription}, $Z \subseteq \ann_R(I) \cap A$. But, $\ann_R(I) \cap A = 0$, since $A^2 = 0$ and we've assumed that (2) holds. Thus, $Z = 0$, and so $\varphi$ is an (injective) $R$-homomorphism $J \rightarrow R$.

Let $(a, -j) \in K$ be any element. Then, by the previous paragraph, we can write $a = \varphi(j)$. By Proposition~\ref{idealdescription}, for all $i\in I$ we have $\varphi(j)i - ji, i\varphi(j) - ij \in \ker(\varphi) = 0$. Hence $ij = i\varphi(j)$ and $ji = \varphi(j)i$.

Next, let $i, j \in J$ be any two elements. Then $(\varphi(i), -i), (\varphi(j), -j) \in K$, and since $K^2 = 0$, we have $$0 = (\varphi(i), -i)(\varphi(j), -j) = (\varphi(ij), -\varphi(i)j - i\varphi(j) + ij).$$ Since $\varphi$ is injective, $0 = \varphi(ij)$ implies that $ij = 0$. From this we conclude that $J^2 = 0$.

Finally, we note that $J \neq 0$, since otherwise we would have $A = 0$ and hence also $K = 0$. Thus, assuming that $E(R,I)$ is not semiprime, we have constructed $J$ and $\varphi$ that violate (3), concluding the proof of the main claim.

It is clear that $(3')$ implies (3). Also, by Lemma~\ref{multinjective}, (1) and (3) imply $(3')$. Hence, in our situation, (3) and $(3')$ are interchangeable.

For the final statement, we note that if there were a nonzero $R$-subrng $J \subseteq I$ and an injective $R$-homomorphism $\varphi : J \rightarrow R$ such that $J^2 = 0$, then $R$ would possess a nonzero ideal with square zero (namely $\varphi(J)$). Thus, if $R$ is semiprime, then (2) and (3) must be satisfied.
\end{proof}

The ideal $K$ in Example~\ref{matrixeg} is an instance of a nonzero ideal in a ring of the form $E(R,I)$, such that $K^2 = 0$, but where $K$ is neither of the form $A \oplus 0$ for some ideal $A$ of $R$ nor of the form $0 \oplus J$ for some $R$-ideal of $I$. Condition (3) in the above statement addresses this sort of obstacle to being semiprime.

From the previous result we can quickly derive a criterion for ideals of $E(R,I)$ having a certain form to be semiprime.

\begin{corollary} \label{semiprimeA+J}
Let $R$ be a ring, and let $I$ be an $R$-rng. Also, let $J$ be an $R$-ideal of $I$, and $A$ an ideal of $R$ such that $AI + IA \subseteq J$ $($so that $A \oplus J$ is an ideal of $E(R,I)$, by Lemma~\ref{dirsumideal}\,$)$. Then $K = A \oplus J$ is a semiprime ideal if and only if the following three conditions hold:
\begin{enumerate}
\item[$(1)$] for all ideals $L \subseteq I$ that contain $J$, if $L^2 \subseteq J$, then $L = J$,
\item[$(2)$] there does not exist an ideal $B \subseteq R$, that properly contains $A$, such that $B^2 \subseteq A$ and $BI + IB \subseteq J$, and
\item[$(3)$] there do not exist an $R$-subrng $L \subseteq I$, that properly contains $J$ and satisfies $L^2 \subseteq J$, and an $R$-homomorphism $\varphi : L/J \rightarrow R/A$ such that for all $i \in I/J$, $j \in L/J$ one has $ij = i\varphi(j)$, $ji = \varphi(j)i$.
\end{enumerate}
\end{corollary}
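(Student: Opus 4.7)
The plan is to reduce the corollary to Theorem~\ref{semiprime} by passing to the quotient ring $E(R,I)/K$. The first step is to establish a natural ring isomorphism $E(R,I)/K \cong E(R/A, I/J)$. The hypothesis $AI + IA \subseteq J$ is exactly what is needed to make $I/J$ into an $R/A$-rng: the left and right $R$-actions on $I$ descend, and the compatibility axioms for an $R$-rng pass to the quotient. With this in hand, the map $E(R,I) \to E(R/A, I/J)$ sending $(r,i) \mapsto (r+A, \, i+J)$ is checked directly from the multiplication formula to be a surjective ring homomorphism with kernel exactly $A \oplus J = K$. By the standard fact that an ideal is semiprime if and only if the corresponding quotient ring is semiprime, $K$ is a semiprime ideal of $E(R,I)$ if and only if $E(R/A, I/J)$ is a semiprime ring.

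The second step is to apply Theorem~\ref{semiprime} to $E(R/A, I/J)$ and translate its three conditions through the correspondence theorem for subobjects containing $J$ (in $I$) or $A$ (in $R$). Condition~(1) of Theorem~\ref{semiprime} says $I/J$ is a semiprime rng: ideals of $I/J$ correspond to ideals $L \subseteq I$ with $L \supseteq J$, and $(L/J)^2 = 0$ corresponds to $L^2 \subseteq J$, giving condition~(1) of the corollary. Condition~(2) of the theorem says that there is no nonzero ideal of $R/A$ with square zero annihilating $I/J$; lifting to an ideal $B$ with $A \subsetneq B \subseteq R$, the conditions $(B/A)^2 = 0$ and $B/A \subseteq \ann_{R/A}(I/J)$ unravel to $B^2 \subseteq A$ and $BI + IB \subseteq J$, which is condition~(2). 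Finally, an $R/A$-subrng of $I/J$ and an $R/A$-homomorphism into $R/A$ as in condition~(3) of the theorem correspond to an $R$-subrng $L \subseteq I$ properly containing $J$ with $L^2 \subseteq J$, together with an $R$-homomorphism $L/J \to R/A$ satisfying the stated compatibility relations — this is condition~(3) of the corollary.

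Combining these translations with Theorem~\ref{semiprime} finishes the proof. There is no essential new ring-theoretic content beyond that theorem; the work is entirely in the reduction to a quotient and the bookkeeping of the lattice correspondence. The main obstacle I anticipate is simply ensuring that the compatibility conditions in~(3) — $ij = i\varphi(j)$ and $ji = \varphi(j)i$ for $i \in I/J$ and $j \in L/J$ — transfer cleanly in both directions, but this is immediate since the $R/A$-module structure on $I/J$ is defined to make the quotient map $I \to I/J$ an $R$-rng homomorphism, so conditions phrased in terms of the $R/A$-action are exactly the images of the same conditions phrased in terms of the $R$-action on representatives.
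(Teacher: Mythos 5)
Your proposal is correct and follows essentially the same route as the paper: reduce to the isomorphism $E(R,I)/(A\oplus J)\cong E(R/A,I/J)$, apply Theorem~\ref{semiprime} (in its $(3')$ form, since the corollary's condition (3) does not require injectivity of $\varphi$), and translate the three conditions through the lattice correspondence, noting that $A\subseteq\ann_R(I/J)$ makes ``$R/A$-subrng'' and ``$R/A$-homomorphism'' coincide with ``$R$-subrng'' and ``$R$-homomorphism.''
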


\begin{proof}
We note that $A \oplus J$ is a semiprime ideal of $E(R,I)$ if and only if $E(R,I)/(A \oplus J) \cong E(R/A, I/J)$ is a semiprime ring. Hence, the claim follows from an application of Theorem~\ref{semiprime} to $E(R/A, I/J)$, once we note that since $A \subseteq \ann_R(I/J)$, in the above situation the concepts of ``$R/A$-subrng of $I/J$" and ``$R/A$-homomorphism" coincide with those of ``$R$-subrng of $I/J$" and ``$R$-homomorphism," respectively.
\end{proof}

The rest of this section is devoted to the relationship between $R$ being semiprime and $E(R,I)$ being semiprime. The next lemma gives a partial converse to the last statement of Theorem~\ref{semiprime}, in view of the fact that if $E(R, I)$ is semiprime, then so is $I$.

\begin{lemma} \label{annsemiprime}
Let $R$ be a ring, and let $I$ be an $R$-rng such that $\, \ann_R(I)$ is a semiprime ideal of $R$. If $E(R,I)$ is semiprime, then so is $R$.
\end{lemma}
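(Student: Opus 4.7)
The plan is to use Theorem~\ref{semiprime} (condition (2) in particular) together with the assumed semiprimeness of the ideal $\ann_R(I)$ to show that any nilpotent ideal of $R$ must vanish. Since $R$ is semiprime precisely when it has no nonzero ideal of square zero, it suffices to take an arbitrary ideal $A \subseteq R$ with $A^2 = 0$ and deduce $A = 0$.

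The key step is a two-stage squeeze. First, since $\ann_R(I)$ is a semiprime ideal of $R$, the quotient $R/\ann_R(I)$ is a semiprime ring. The image $(A + \ann_R(I))/\ann_R(I)$ is an ideal of this quotient whose square is zero (because $A^2 = 0$), hence it must be the zero ideal. This gives the inclusion $A \subseteq \ann_R(I)$. Second, now that $A$ is simultaneously an ideal of $R$ with $A^2 = 0$ and contained in $\ann_R(I)$, condition (2) of Theorem~\ref{semiprime} (applied to the semiprime ring $E(R,I)$) forces $A = 0$.

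I don't anticipate any real obstacle here: the two hypotheses \emph{exactly} rule out the two ways a square-zero ideal of $R$ could survive. The semiprimeness of $\ann_R(I)$ handles the part of $A$ that acts nontrivially on $I$ (by pushing $A$ into the annihilator), and then Theorem~\ref{semiprime}(2) handles what remains, namely the annihilator part. The mild technical point to check is simply the standard reduction that semiprimeness is equivalent to the absence of nonzero ideals of square zero (if $B$ is nonzero with $B^n = 0$ for some minimal $n \geq 2$, then $B^{\lceil n/2 \rceil}$ is a nonzero ideal whose square is zero), so no generality is lost by treating only $A^2 = 0$.
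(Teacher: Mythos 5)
Your proof is correct and follows essentially the same two-step squeeze as the paper: first push $A$ into $\ann_R(I)$ using semiprimeness of $R/\ann_R(I)$, then kill it using semiprimeness of $E(R,I)$. The only cosmetic difference is that you invoke condition (2) of Theorem~\ref{semiprime} for the second step, whereas the paper argues directly that $A \oplus 0$ is a nilpotent ideal of $E(R,I)$ via Lemma~\ref{dirsumideal} --- which is exactly how condition (2) is established in the proof of Theorem~\ref{semiprime} anyway.
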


\begin{proof}
Suppose that $E(R,I)$ is semiprime, and let $A$ be a nilpotent ideal of $R$.  Then $A \subseteq \ann_R(I)$, since the image of $A$ in the semiprime ring $R/\ann_R(I)$ is nilpotent and hence zero.  Also, $A \oplus 0$ is
an ideal of $E(R,I)$, by Lemma~\ref{dirsumideal}, and it is nilpotent. We conclude that $A = 0$, showing that $R$ is semiprime.  
\end{proof}

\begin{lemma}\label{lemmacentralprime}
Let $R$ be a ring, and let $I$ be a centrally generated $R$-rng. If $I$ is a prime $($respectively, semiprime\,$)$ rng, then $\, \ann_R(I)$ is a prime $($respectively, semiprime\,$)$ ideal of $R$.
\end{lemma}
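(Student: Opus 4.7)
The plan is to handle the semiprime and prime statements in parallel, leveraging Lemma~\ref{centrallemma}, which furnishes, for any ideal $A \subseteq R$, the equality $AI = IA$ together with the fact that this set is an $R$-ideal (and hence an ideal) of $I$. This is the crucial structural input: without the centrally generated hypothesis, $AI$ need not be closed under multiplication by $I$ on both sides, and the arguments below would collapse.

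For the semiprime case, I would take an ideal $A \subseteq R$ with $A^2 \subseteq \ann_R(I)$ and aim to conclude $A \subseteq \ann_R(I)$. The computation
\[
(AI)^2 = A(IA)I = A(AI)I = A^2 I^2 \subseteq A^2 I = 0
\]
shows that $AI$ is an ideal of $I$ whose square vanishes. Semiprimeness of $I$ then forces $AI = 0$, and by Lemma~\ref{centrallemma} also $IA = AI = 0$; hence $A \subseteq \ann_R(I)$, so $\ann_R(I)$ is semiprime in $R$.

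For the prime case, I would take ideals $A, B \subseteq R$ with $AB \subseteq \ann_R(I)$ and aim to conclude $A \subseteq \ann_R(I)$ or $B \subseteq \ann_R(I)$. The parallel computation
\[
(AI)(BI) = A(IB)I = A(BI)I = (AB)I^2 \subseteq (AB)I = 0
\]
shows that the product of the two ideals $AI$ and $BI$ of $I$ is zero. Primeness of $I$ forces $AI = 0$ or $BI = 0$, and combined with $IA = AI$, $IB = BI$ this yields $A \subseteq \ann_R(I)$ or $B \subseteq \ann_R(I)$.

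I do not anticipate a genuine obstacle once Lemma~\ref{centrallemma} is in hand: both proofs reduce to the same one-line rearrangement. The only point of care is that I never silently commute an element of $A$ (or $B$) past an element of $I$; the argument only ever uses the set-level identity $AI = IA$, which is exactly what central generation provides.
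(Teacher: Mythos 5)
Your proposal is correct and follows essentially the same route as the paper: both invoke Lemma~\ref{centrallemma} to make $AI = IA$ an ideal of $I$ and then show the relevant product of such ideals vanishes, differing only in the cosmetic detail of which side the commutation $AI=IA$ is applied (the paper computes $(AI)(BI)=(IA)(BI)=I(AB)I=0$, you compute $(AI)(BI)=A(BI)I=(AB)I^2=0$). The paper writes out only the prime case and declares the semiprime case analogous, exactly as your parallel treatment confirms.
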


\begin{proof}
We will prove only the ``prime" version of the statement, since the ``semiprime" version can be proved similarly.  Thus, suppose that $I$ is prime, and let $A$ and $B$ be ideals of $R$ such that $AB \subseteq \ann_R(I)$. By Lemma~\ref{centrallemma}, $AI = IA$ and $BI = IB$ are $R$-ideals of $I$. Then, $(AI)(BI) = (IA)(BI) = I(AB)I = 0$. Therefore, either $AI = 0$ or $BI = 0$, showing that either $A \subseteq \ann_R(I)$ or $B \subseteq \ann_R(I)$. Thus $\ann_R(I)$ is a prime ideal of $R$.
\end{proof}

Putting the previous two lemmas together, we obtain another partial converse to the last statement of Theorem~\ref{semiprime}. (This claim can also be deduced, more directly, from \cite[Lemma 4.8]{DM}.)

\begin{corollary}\label{centralsemiprime}
Let $R$ be a ring, and let $I$ be a centrally generated $R$-rng. If $E(R,I)$ is semiprime, then so is $R$.
\end{corollary}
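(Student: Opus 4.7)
The plan is to chain the two preceding lemmas in an essentially immediate way, since the hypothesis of Lemma~\ref{annsemiprime} requires $\ann_R(I)$ to be semiprime, and Lemma~\ref{lemmacentralprime} supplies exactly that conclusion once we know $I$ itself is semiprime.

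First I would observe that if $E(R,I)$ is semiprime, then so is $I$. This is the fact noted in the text just before Lemma~\ref{annsemiprime}, and it follows from Theorem~\ref{semiprime}: a nilpotent ideal of $I$ would generate (as an $R$-ideal of $I$, via Lemma~\ref{R-ideal-ideal}) a nilpotent ideal of $E(R,I)$ through $0 \oplus (\cdot)$, using Lemma~\ref{dirsumideal}(1).

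Next, since $I$ is a centrally generated $R$-rng that is semiprime, Lemma~\ref{lemmacentralprime} applies and tells us that $\ann_R(I)$ is a semiprime ideal of $R$. With this in hand, the hypothesis of Lemma~\ref{annsemiprime} is met, and that lemma immediately yields that $R$ is semiprime.

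There is no real obstacle here: the corollary is a pure composition of the two preceding lemmas with the background observation that $E(R,I)$ semiprime forces $I$ semiprime. The only thing one needs to be mindful of is to explicitly invoke the ``semiprime'' half of Lemma~\ref{lemmacentralprime} (rather than the ``prime'' half) and to check that the hypothesis ``centrally generated'' is carried through unchanged from the corollary to that lemma.
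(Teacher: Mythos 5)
Your proof is correct and is exactly the argument the paper intends: the paper states the corollary as an immediate consequence of ``putting the previous two lemmas together,'' i.e.\ using that $E(R,I)$ semiprime forces $I$ semiprime (by Theorem~\ref{semiprime}), then Lemma~\ref{lemmacentralprime} to get $\ann_R(I)$ semiprime, and finally Lemma~\ref{annsemiprime}. No gaps.
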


In general, $E(R,I)$ can be semiprime without $R$ being such, as the next example shows.

\begin{example}
Let $S$ be any nonzero ring, let $R$ be the subring of $S \langle x, y \rangle /(x^2)$ generated by $S$ and $x$ (so $R \cong S[x]/(x^2)$), and let $I$ be the ideal of $S \langle x, y \rangle /(x^2)$ generated by $y$. It is easy to see that $E(R,I) \cong S \langle x, y \rangle /(x^2)$, and hence that $E(R,I)$ is (semi)prime. (For all nonzero $f, g \in S \langle x, y \rangle /(x^2)$ one has $fyg \neq 0$.) On the other hand, $R$ clearly is not semiprime.
\end{example}

\section{Primeness}  \label{primesection}

Let us now characterize when $E(R,I)$ is prime. The proof of the next result, which is a generalization of \cite[Proposition 3.3]{DM} to arbitrary $R$-rngs, is very similar to that of Theorem~\ref{semiprime}.

\begin{theorem}  \label{prime}  Let $R$ be a ring, and let $I$ be a nonzero $R$-rng. Then $E(R,I)$ is prime if and only if the following three conditions hold:
\begin{enumerate}
\item[$(1)$] $I$ is a prime rng,
\item[$(2)$] $\ann_R(I) = 0$, and 
\item[$(3)$] there do not exist a nonzero $R$-subrng $J \subseteq I$ and an {\em injective} $R$-homomorphism $\varphi : J \rightarrow R$ such that for all $i \in I$, $j \in J$ one has $ij = i\varphi(j)$, $ji = \varphi(j)i$.
\end{enumerate}
Moreover, the above statement also holds if $\, (3)$ is replaced with
\begin{enumerate}
\item[$(3')$] there do not exist a nonzero $R$-subrng $J \subseteq I$ and an $R$-homomorphism $\varphi : J \rightarrow R$ such that for all $i \in I$, $j \in J$ one has $ij = i\varphi(j)$, $ji = \varphi(j)i$.
\end{enumerate}
\end{theorem}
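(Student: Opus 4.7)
The plan is to mirror the proof of Theorem~\ref{semiprime} as closely as possible, since the statements differ only in replacing ``semiprime / $J^2 = 0$'' hypotheses with their stronger ``prime / no restriction'' counterparts. The argument splits into two main directions plus a final short observation establishing the equivalence of (3) and (3'). For the necessity direction, I would show that the failure of any of (1), (2), (3) yields two nonzero ideals of $E(R,I)$ with zero product. If $\ann_R(I) \neq 0$, then $\ann_R(I) \oplus 0$ and $0 \oplus I$ are nonzero ideals (by Lemma~\ref{dirsumideal}) whose product vanishes in both orders, so (2) is necessary. If (3) fails via some $J$ and injective $\varphi$, then $K = \{(\varphi(j), -j) : j \in J\}$ is a nonzero ideal by Proposition~\ref{idealdescription} (the multiplication conditions on $\varphi$ imply (a) and (b) there), and those same conditions give $K \cdot (0 \oplus I) = 0 = (0 \oplus I) \cdot K$. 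For (1), since $I$ is a nonzero two-sided ideal of the prime ring $E(R,I)$, it is a prime rng by the standard fact that every nonzero ideal of a prime ring is prime as a rng.

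The sufficiency direction is the more technical step. Assuming (1), (2), (3) and that $E(R,I)$ is not prime, I would pick nonzero ideals $K_1, K_2$ with $K_1 K_2 = 0$ and decompose each via Proposition~\ref{idealdescription} as $K_\ell = \{(a, -j) : a \in A_\ell, j \in J_\ell, a + Z_\ell = \varphi_\ell(j)\}$. The key observation is that $(0, -j_\ell) \in K_\ell$ for every $j_\ell \in \ker(\varphi_\ell)$, and the product $(0, -j_1)(0, -j_2) = (0, j_1 j_2)$ must lie in $K_1 K_2 = 0$, forcing $\ker(\varphi_1)\ker(\varphi_2) = 0$ inside $I$. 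A case split then finishes the argument: if, say, $\ker(\varphi_1) = 0$, then $\varphi_1$ is injective, so by Proposition~\ref{idealdescription}(3) together with (2) we get $Z_1 = 0$, and conditions (a), (b) of the proposition read as $\varphi_1(j) i = j i$ and $i \varphi_1(j) = i j$ for all $i \in I$, $j \in J_1$; since $K_1 \neq 0$ forces $J_1 \neq 0$, the pair $(J_1, \varphi_1)$ contradicts (3), and the case $\ker(\varphi_2) = 0$ is symmetric. The remaining case is when both $\ker(\varphi_\ell)$ are nonzero; these are $R$-ideals of $I$ by Proposition~\ref{idealdescription}(4), so they are nonzero ideals of $I$ whose product is zero, contradicting (1). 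The hardest part of the proof will be managing this case split and keeping track of how (a), (b) interact with the injectivity of $\varphi_\ell$; once (2) is used to kill $Z_1$, the rest follows directly from the proposition.

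For the equivalence of (3) and (3'), the direction (3') $\Rightarrow$ (3) is immediate, since an injective homomorphism is a special case. For (3) $\Rightarrow$ (3'), I would invoke Lemma~\ref{multinjective}: hypothesis (1) makes $I$ semiprime, so any $\varphi: J \rightarrow R$ with $J$ nonzero and the multiplication conditions is automatically injective; hence ruling out injective such $\varphi$ is the same as ruling out any such $\varphi$.
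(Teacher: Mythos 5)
Your proposal is correct and follows essentially the same route as the paper: the same three obstruction constructions for necessity, and for sufficiency the same decomposition of the two annihilating ideals via Proposition~\ref{idealdescription}, the observation that $\ker(\varphi_1)\ker(\varphi_2)=0$ forces one kernel to vanish because these kernels are $R$-ideals of the prime rng $I$, and then condition (2) together with (a), (b) to extract a pair $(J,\varphi)$ violating (3), with the $(3)\Leftrightarrow(3')$ step handled by Lemma~\ref{multinjective} exactly as in the paper. The only divergence is minor: for the necessity of (1) you cite the standard fact that a nonzero ideal of a prime ring is a prime rng, whereas the paper argues contrapositively by passing from ideals $J_1,J_2$ of $I$ to the $R$-ideals $RJ_\ell R$ and invoking Lemma~\ref{R-ideal-ideal}; both are sound and rest on the same underlying computation.
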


\begin{proof}
Suppose that $I$ is not prime. Then we can find nonzero ideals $J_1, J_2 \subseteq I$ such that $J_1J_2 = 0$. Let $K_1$ and $K_2$ be the (nonzero) $R$-ideals of $I$ generated by $J_1$ and $J_2$, respectively. By Lemma~\ref{dirsumideal}, $0\oplus K_1$ and $0 \oplus K_2$ are ideals of $E(R,I)$, and, by Lemma~\ref{R-ideal-ideal}, $(0\oplus K_1)^3(0 \oplus K_2)^3 = 0$. This implies that $E(R,I)$ is not prime.

Next, suppose that $\ann_R(I) \neq 0$. By Lemma~\ref{dirsumideal}, $\ann_R(I) \oplus 0$ and $0 \oplus I$ are ideals of $E(R,I)$, and clearly $(\ann_R(I) \oplus 0)(0 \oplus I) = 0$. Therefore, $E(R, I)$ is again not prime.

Now, suppose that there are a nonzero $R$-subrng $J \subseteq I$ and an injective $R$-homomorphism $\varphi : J \rightarrow R$ such that for all $i \in I$, $j \in J$ one has $ij = i\varphi(j)$, $ji = \varphi(j)i$. Let $L = \{(a, -j) : j \in J, a = \varphi(j)\}$. Then $L$ is an ideal of $E(R,I)$, by Proposition~\ref{idealdescription} (with $Z = 0$ and $A = \varphi(J)$). Since $J \neq 0$, $L$ is also nonzero. Now, let $(\varphi(j), -j) \in L$ and $i \in I$ be any two elements. Then $$(\varphi(j), -j) (0, i) =  (0, \varphi(j)i - ji) = 0,$$ by our assumptions on $\varphi$. Hence, we have $L(0\oplus I) = 0$, and therefore $E(R,I)$ is not prime once more.

We have shown that if $E(R,I)$ is prime, then the conditions (1), (2), and (3) must hold. For the converse, let us assume (1) and (2), and show that if $E(R,I)$ is not prime, then (3) must be false.  Thus, let $L$ and $M$ be nonzero ideals of $E(R,I)$ such that $LM = 0$. By Proposition~\ref{idealdescription}, we can write $L = \{(a, -j): a \in A, j \in J, a+Y = \varphi(j)\}$ and $M = \{(b, -k): a \in B, k \in K, b+Z = \psi(k)\}$, where $Y \subseteq A$ and $Z \subseteq B$ are ideals of $R$,  $J$ and $K$ are $R$-subrngs of $I$, and $\varphi: J \rightarrow A/Y$, $\psi: K \rightarrow B/Z$ are surjective $R$-homomorphisms satisfying conditions (a) and (b) of the proposition. 

Now, $0 \oplus \ker(\varphi) \subseteq L$ and $0 \oplus \ker(\psi) \subseteq M$, which implies that $\ker(\varphi)\ker(\psi) = 0$. But, by the Proposition~\ref{idealdescription}, $\ker(\varphi)$ and $\ker(\psi)$ are $R$-ideals of $I$, and hence one of them must be zero, since we have assumed that $I$ is prime.  Thus, let us suppose that $\ker(\varphi) = 0$ (the proof in the other case is similar), and therefore that $\varphi$ is an injective $R$-homomorphism.

Since $\varphi$ is injective, Proposition~\ref{idealdescription} implies that $Y \subseteq \ann_R(I)$. Thus $Y = 0$, since we have assumed that $\ann_R(I) = 0$, and hence $\varphi$ is an (injective) $R$-homomorphism $J \rightarrow R$.

Let $(a, -j) \in L$ be any element. Then, by the previous paragraph, we can write $a = \varphi(j)$. By Proposition~\ref{idealdescription}, for all $i\in I$ we have $\varphi(j)i - ji, i\varphi(j) - ij \in \ker(\varphi) = 0$. Hence $ij = i\varphi(j)$ and $ji = \varphi(j)i$.

Finally, we note that $J \neq 0$, since otherwise, we would have $A = 0$, and hence also $L = 0$. Thus, assuming that $E(R,I)$ is not semiprime, we have constructed $J$ and $\varphi$ that violate (3), concluding the proof of the main claim.

For the last statement, we note that $(3')$ clearly implies (3), and by Lemma~\ref{multinjective}, (1) and (3) imply $(3')$. Hence, in our situation, (3) and $(3')$ are interchangeable.
\end{proof}

The above theorem implies, for instance, that for any ring $R$ and any ideal $I \subseteq R$, $E(R, I)$ is not prime. Thus, unlike the case of semiprime rings, $E(R, I)$ need not be prime when $R$ and $I$ are prime, even when $\ann_R(I) = 0$. However, as in the semiprime case, if $I$ is a centrally generated $R$-rng, then $R$ is prime whenever $E(R,I)$ is. This follows from Theorem~\ref{prime} and Lemma~\ref{lemmacentralprime}, and is also shown directly in \cite[Lemma 4.8]{DM}.

We finish the section with an analogue of Corollary~\ref{semiprimeA+J} for prime ideals.

\begin{corollary}\label{A+J}
Let $R$ be a ring, and let $I$ be a nonzero $R$-rng. Also, let $J$ be a proper $R$-ideal of $I$, and $A$ an ideal of $R$ such that $AI + IA \subseteq J$ $($so that $A \oplus J$ is an ideal of $E(R,I)$, by Lemma~\ref{dirsumideal}\,$)$. Then $K = A \oplus J$ is a prime ideal if and only if the following three conditions hold:
\begin{enumerate}
\item[$(1)$] for all ideals $L_1, L_2 \subseteq I$ that contain $J$, if $L_1L_2 \subseteq J$, then either $L_1 = J$ or $L_2 = J$,
\item[$(2)$] $A = \{r \in R : rI + Ir \subseteq J\}$, and 
\item[$(3)$] there do not exist an $R$-subrng $L \subseteq I$, properly containing $J$, and an $R$-homomorphism $\varphi : L/J \rightarrow R/A$ such that for all $i \in I/J$, $j \in L/J$ one has $ij = i\varphi(j)$, $ji = \varphi(j)i$.
\end{enumerate}
\end{corollary}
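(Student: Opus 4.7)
The plan is to mimic the proof of Corollary~\ref{semiprimeA+J} very closely: the ideal $A \oplus J$ of $E(R,I)$ is prime if and only if the quotient ring $E(R,I)/(A \oplus J)$ is prime, and the natural projections induce an isomorphism $E(R,I)/(A \oplus J) \cong E(R/A, I/J)$ (using $AI + IA \subseteq J$ to verify that the multiplication in the quotient factors correctly). So the whole question reduces to applying Theorem~\ref{prime} to $E(R/A, I/J)$.

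First, I would check that the hypothesis of Theorem~\ref{prime} that $I/J$ be nonzero matches the hypothesis that $J$ be a \emph{proper} $R$-ideal of $I$. Then I would translate each of the three conditions of Theorem~\ref{prime} for $E(R/A, I/J)$ into the statement of the corollary. Condition~(1) of Theorem~\ref{prime}, that $I/J$ be a prime rng, is exactly condition~(1) here (ideals of $I$ containing $J$ correspond bijectively to ideals of $I/J$). Condition~(2) of Theorem~\ref{prime}, that $\ann_{R/A}(I/J) = 0$, says that $r + A$ annihilates $I/J$ on both sides only when $r \in A$; unwinding, this says $\{r \in R : rI + Ir \subseteq J\} \subseteq A$, and the reverse inclusion is built in from $AI + IA \subseteq J$, so this reproduces condition~(2) here.

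The one subtle point, which I expect to be the main technical obstacle, is that Theorem~\ref{prime} speaks of $R/A$-subrngs and $R/A$-homomorphisms on $I/J$, whereas the statement of Corollary~\ref{A+J} speaks of $R$-subrngs and $R$-homomorphisms. I would resolve this exactly as in the proof of Corollary~\ref{semiprimeA+J}: because $A \subseteq \ann_R(I/J)$ (which is precisely the assumption $AI + IA \subseteq J$), every $(R,R)$-bimodule structure on a subset of $I/J$ factors through $R/A$, and so the two notions coincide. With this identification in hand, condition~(3) of Theorem~\ref{prime} for $E(R/A, I/J)$ is precisely condition~(3) of the corollary.

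Putting these three translations together gives the equivalence. The final sentence about $A \oplus J$ being an ideal is already justified by Lemma~\ref{dirsumideal}, so no extra work is needed there. I do not expect any unexpected difficulty beyond the bimodule-structure identification, which is essentially bookkeeping; the argument is really just Theorem~\ref{prime} viewed through the isomorphism $E(R,I)/(A \oplus J) \cong E(R/A, I/J)$.
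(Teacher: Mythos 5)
Your proposal is correct and follows essentially the same route as the paper: pass to the quotient $E(R,I)/(A \oplus J) \cong E(R/A, I/J)$, apply Theorem~\ref{prime}, identify $\ann_{R/A}(I/J) = 0$ with condition (2), and observe that since $A \subseteq \ann_R(I/J)$ the notions of $R/A$-subrng and $R$-subrng (and likewise the homomorphisms) coincide, so condition (3) transfers verbatim.
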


\begin{proof}
We note that $A \oplus J$ is a prime ideal of $E(R,I)$ if and only if $E(R,I)/(A \oplus J) \cong E(R/A, I/J)$ is a prime ring. Hence, the result follows from an application of Theorem~\ref{prime} to $E(R/A, I/J)$, once we note that $\ann_{R/A}(I/J) = 0$ if and only if $A = \{r \in R : rI + Ir \subseteq J\}$, and that since $A \subseteq \ann_R(I/J)$, in the above situation the concepts of ``$R/A$-subrng of $I/J$" and ``$R/A$-homomorphism" coincide with those of ``$R$-subrng of $I/J$" and ``$R$-homomorphism," respectively.
\end{proof}

\section{Prime ideals in a special case} \label{primeidealsection}

Let us now turn to the questions of when an arbitrary ideal of $E(R,I)$ is prime and when it is maximal. We will answer these questions in the case where the $R$-rng $I$ comes equipped with an $R$-homomorphism $\varphi : I \rightarrow R$ that satisfies $i\varphi(j) = ij = \varphi(i)j$ for all $i, j \in I$. This condition may look esoteric at first glance, but it is satisfied, for instance, by all injective $R$-homomorphisms $\varphi : I \rightarrow R$. For, $i\varphi(j)$, $ij$, and $\varphi(i)j$ all have the same image under an $R$-homomorphism $\varphi : I \rightarrow R$. Hence, if $\varphi$ is injective, then the three must be equal. 

If $I$ is semiprime, then an $R$-homomorphism $\varphi : I \rightarrow R$ that satisfies the above condition must be injective (cf.\ Lemma~\ref{multinjective}). However, in general, an $R$-homomorphism $\varphi : I \rightarrow R$ may satisfy this condition without being injective, as the next example shows.

\begin{example}
Let $S$ be any nonzero ring, let $R = S [x]$, and let $I$ be the ideal of $S [x,y]/(xy, y^2)$ generated by $x$ and $y$. Then we can define an $R$-homorphism $\varphi : I \rightarrow R$ via $\varphi (f(x) + g(y)) = f(x)$, where $f(x) = a_1x+ \dots + a_nx^n$ and $g(y) = by$, for some $a_1, \dots, a_n, b \in S$. Now, let $f_1(x) + g_1(y)$ and $f_2(x) + g_2(y)$ be arbitrary elements of $I$ (where $f_1(x), g_1(y), f_2(x), g_2(y)$ have appropriate forms, as above). Then, $$(f_1(x) + g_1(y))(f_2(x) + g_2(y)) = f_1(x)f_2(x)$$ $$= (f_1(x) + g_1(y))f_2(x) = (f_1(x) + g_1(y))\varphi (f_2(x) + g_2(y)).$$ Hence for all $i, j \in I$ we have $i\varphi(j) = ij$, and similarly $\varphi(i)j = ij$. But, $\varphi$ clearly is not injective.
\end{example}

We now proceed to give a classification of the prime ideals of $E(R,I)$ in the case where there is an $R$-homomorphism $\varphi : I \rightarrow R$ that satisfies $i\varphi(j) = ij = \varphi(i)j$ for all $i, j \in I$. The argument requires several short lemmas.

\begin{lemma}\label{imposter}
Let $R$ be a ring, let $I$ be an $R$-rng, and let $\varphi : I \rightarrow R$ be an $R$-homomorphism such that for all $i, j \in I$ one has $i\varphi(j) = ij = \varphi(i)j$. Also, set $I_\varphi = \{(\varphi(i), -i) : i \in I \} \subseteq E(R,I)$. Then there exists an $R$-isomorphism $\psi : E(R,I) \rightarrow E(R,I)$ such that $\psi (I) = I_\varphi$.
\end{lemma}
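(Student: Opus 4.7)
The plan is to write down an explicit self-map and verify directly that it works. Define $\psi : E(R,I) \to E(R,I)$ by $\psi(r,i) = (r+\varphi(i), -i)$. Then by construction $\psi(0,i) = (\varphi(i), -i)$, so $\psi(I) = I_\varphi$, and $\psi(r,0) = (r,0)$ so $\psi$ fixes $R \oplus 0$ pointwise (in particular $\psi(1,0) = (1,0)$). Additivity is immediate from additivity of $\varphi$.

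Next I would check that $\psi$ is a bijection by showing it is an involution. Computing $\psi(\psi(r,i)) = \psi(r+\varphi(i), -i) = (r+\varphi(i)+\varphi(-i), i) = (r,i)$, so $\psi^2 = \mathrm{id}$, which gives bijectivity at no cost.

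The heart of the proof is multiplicativity, and this is where the hypothesis $i\varphi(j) = ij = \varphi(i)j$ is used. On one side,
\[
\psi((r,i)(p,j)) = \psi(rp, ip+rj+ij) = \bigl(rp + \varphi(ip+rj+ij),\, -(ip+rj+ij)\bigr),
\]
and since $\varphi$ is an $(R,R)$-bimodule map and rng homomorphism, $\varphi(ip+rj+ij) = \varphi(i)p + r\varphi(j) + \varphi(i)\varphi(j)$. On the other side,
\[
\psi(r,i)\psi(p,j) = (r+\varphi(i), -i)(p+\varphi(j), -j),
\]
which when expanded using the multiplication rule in $E(R,I)$ gives first coordinate $rp + r\varphi(j) + \varphi(i)p + \varphi(i)\varphi(j)$ (matching the first coordinate above) and second coordinate
\[
-ip - i\varphi(j) - rj - \varphi(i)j + ij.
\]
Here the hypothesis kicks in: $i\varphi(j) = ij$ and $\varphi(i)j = ij$ cause the three trailing $ij$ terms to collapse to $-ij$, yielding $-ip - rj - ij$, which matches the second coordinate from the other computation. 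This is the one delicate step; the rest is bookkeeping.

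Finally, $\psi$ is an $(R,R)$-bimodule homomorphism because it is a ring homomorphism fixing $R \oplus 0$, so for $r \in R$ and $x \in E(R,I)$ one has $\psi(rx) = \psi((r,0)x) = (r,0)\psi(x) = r\psi(x)$ and similarly on the right. Combined with the already-established ring-isomorphism property, this makes $\psi$ the desired $R$-isomorphism. I expect no serious obstacle beyond being careful in the second-coordinate computation, where one must apply both halves of the hypothesis $i\varphi(j) = ij = \varphi(i)j$ to cancel the cross terms.
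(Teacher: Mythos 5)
Your proposal is correct and coincides with the paper's proof: the paper defines the very same map $\psi(r,i) = (r+\varphi(i),-i)$, notes $\psi^2 = \mathrm{id}$ for bijectivity, and leaves the multiplicativity check as "routine," which is exactly the computation you carry out. Your writeup simply makes that routine verification explicit, correctly isolating the use of $i\varphi(j)=ij=\varphi(i)j$ in collapsing the cross terms.
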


\begin{proof}
Let us define a map $\psi : E(R,I) \rightarrow E(R,I)$ by $\psi (r,i) = (r + \varphi(i), -i)$, for all $r \in R$ and $i \in I$. Since $\psi^2$ is the identity map, it follows that $\psi$ is a bijection. Also, it is clear that $\psi (I) = I_\varphi$ and $\psi(R) = R$. Thus, to finish the proof it suffices to check that $\psi$ is a ring homomorphism, and this verification is routine. 
\end{proof}

\begin{lemma}\label{diag}
Let $R$ be a ring, let $I$ be an $R$-rng, and let $\varphi : I \rightarrow R$ be an $R$-homomorphism such that for all $i, j \in I$ one has $i\varphi(j) = ij = \varphi(i)j$. Also, let $J$ be a nonzero $R$-ideal of $I$, and set $J_\varphi = \{(\varphi(j), -j) : j \in J \} \subseteq E(R,I)$. Then the ideal $J_\varphi$ is prime if and only if $0 \oplus J$ is prime if and only if $J = I$ and $R$ is a prime ring.
\end{lemma}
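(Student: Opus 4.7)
The plan is to chain the two equivalences through the isomorphism from Lemma~\ref{imposter} and then through the natural quotient $E(R,I)/(0 \oplus J)$. First, Lemma~\ref{imposter} gives an $R$-automorphism $\psi : E(R,I) \rightarrow E(R,I)$, namely $(r,i) \mapsto (r + \varphi(i), -i)$. Since $\psi(0,j) = (\varphi(j), -j)$ for every $j \in J$, I have $\psi(0 \oplus J) = J_\varphi$, and because $\psi$ is a ring isomorphism, $J_\varphi$ is prime if and only if $0 \oplus J$ is prime. That disposes of the first equivalence at once.

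For the second equivalence, since $J$ is an $R$-ideal of $I$, Lemma~\ref{dirsumideal} says $0 \oplus J$ is an ideal of $E(R,I)$, and the obvious map yields a ring isomorphism $E(R,I)/(0\oplus J) \cong E(R, I/J)$. When $J = I$ this quotient is $E(R,0) \cong R$, so $0 \oplus J$ is prime precisely when $R$ is prime, which gives one direction of the second equivalence.

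For the reverse direction, when $J \neq I$ I would exhibit two nonzero ideals of the quotient whose product vanishes. The key observation is that the hypothesis $i\varphi(j) = ij = \varphi(i)j$ forces
\[
I_\varphi \cdot (0 \oplus I) \;=\; 0 \;=\; (0 \oplus I) \cdot I_\varphi,
\]
because $(\varphi(i),-i)(0,j) = (0, \varphi(i)j - ij) = 0$, and symmetrically on the other side. Consequently the ideals $L_1 = I_\varphi + (0 \oplus J)$ and $L_2 = 0 \oplus I$ of $E(R,I)$ satisfy $L_1 L_2, L_2 L_1 \subseteq 0 \oplus J$. Assuming $J \neq I$, I check that neither $L_1$ nor $L_2$ is contained in $0 \oplus J$: the containment $L_2 \subseteq 0 \oplus J$ would force $I = J$, while for any $i \in I \setminus J$ the element $(\varphi(i), -i) \in L_1$ has second coordinate outside $J$, so $L_1 \not\subseteq 0 \oplus J$ either. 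Their images in $E(R,I)/(0\oplus J)$ are therefore two nonzero ideals annihilating each other, so the quotient is not prime.

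The conceptual hurdle, and the only step that is not bookkeeping, is recognizing that the condition $i\varphi(j) = ij = \varphi(i)j$ creates two distinct ``copies'' of $I$ inside $E(R,I)$, namely $0 \oplus I$ and $I_\varphi$, that annihilate each other on both sides. Once this is in place the classification is forced: $0 \oplus J$ can absorb this obstruction only when $J$ is large enough to swallow one of the two copies entirely, which is exactly the condition $J = I$, and then primeness of the quotient $R$ takes over.
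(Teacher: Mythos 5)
Your proof is correct, and for the one nontrivial direction (primeness of $0 \oplus J$ forces $J = I$) it takes a genuinely different route from the paper. The paper argues by contradiction through Corollary~\ref{A+J}: assuming $0 \oplus J$ is prime with $J \neq I$, condition (2) of that corollary gives $\{r \in R : rI + Ir \subseteq J\} = 0$, whence $\varphi(J) = 0$ (since $\varphi(j)i = ji \in J$ and $i\varphi(j) = ij \in J$), and then $\varphi$ descends to an $R$-homomorphism $I/J \to R$ that violates condition (3) of the same corollary. You instead bypass that machinery entirely and produce the witnessing ideals by hand: the computation $(\varphi(i),-i)(0,j) = (0,\varphi(i)j - ij) = 0$ shows $I_\varphi$ and $0 \oplus I$ annihilate each other, so $L_1 = I_\varphi + (0\oplus J)$ and $L_2 = 0 \oplus I$ are ideals with $L_1L_2 \subseteq 0 \oplus J$, and when $J \neq I$ neither is contained in $0 \oplus J$ (for $i \in I \setminus J$ the element $(\varphi(i),-i) \in L_1$ has second coordinate outside $J$, and $L_2 \subseteq 0\oplus J$ would force $I = J$). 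This is more elementary and self-contained --- it does not depend on Corollary~\ref{A+J}, hence not on Theorem~\ref{prime} --- at the cost of constructing the obstruction explicitly rather than quoting the general primeness criterion; it also makes transparent that the obstruction is exactly the ``two annihilating copies of $I$'' phenomenon underlying condition (3) of Theorem~\ref{prime}. (Minor remark: you could take $L_1 = I_\varphi$ itself, since $I_\varphi(0\oplus I) = 0 \subseteq 0\oplus J$ already; and checking one order of the product suffices for non-primeness.) The first equivalence via Lemma~\ref{imposter} and the case $J = I$ via $E(R,I)/(0\oplus I) \cong R$ are handled exactly as in the paper.
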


\begin{proof}
By, Lemma~\ref{imposter}, there exists an $R$-isomorphism $\psi : E(R,I) \rightarrow E(R,I)$ such that $\psi (I) = I_\varphi$. In particular, $\psi (0\oplus J) = J_\varphi$, which implies that $J_\varphi$ is prime if and only if $0 \oplus J$ is prime.

If $J = I$, then $E(R,I)/(0 \oplus J) \cong R$ is a prime ring if and only if $0 \oplus J \subseteq E(R,I)$ is a prime ideal. Thus, to finish the proof, it suffices to show that if $0 \oplus J$ is prime, then $J = I$. 

Suppose that $0 \oplus J$ is prime and $J \neq I$. Then, by Corollary~\ref{A+J}, $\{r \in R : rI + Ir \subseteq J\} = 0$. Now for all $i \in I$ and $j \in J$ we have $i\varphi(j) = ij \in J$ and $\varphi(j)i = ji \in J$, and hence $\varphi(J) = 0$. Thus, $\varphi$ restricts to an $R$-homomorphism $\bar{\varphi} : I/J \rightarrow R$ such that for all $i, j \in I/J$ one has $ij = i\bar{\varphi}(j)$, $ji = \bar{\varphi}(j)i$, which contradicts Corollary~\ref{A+J}. Therefore, $J = I$.
\end{proof}

\begin{lemma}\label{diag2}
Let $R$ be a ring, let $I$ be an $R$-rng, and let $\varphi : I \rightarrow R$ be an $R$-homomorphism such that for all $i, j \in I$ one has $i\varphi(j) = ij = \varphi(i)j$. Also, let $J$ be a nonzero $R$-ideal of $I$, let $\psi : J \rightarrow R$ be an $R$-homomorphism such that for all $i \in I$ and $j \in J$ one has $i\psi(j) = ij$ and $\psi(j)i = ji$, and set $J_\psi = \{(\psi(j), -j) : j \in J \} \subseteq E(R,I)$. Then the ideal $J_\psi$ is prime if and only if $J = I$, $\varphi (i) = \psi (i)$ for all $i \in I$, and $R$ is a prime ring.
\end{lemma}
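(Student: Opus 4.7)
The plan is to transfer $J_\psi$ through the isomorphism of Lemma~\ref{imposter} and reduce to Lemma~\ref{diag}. Let $\Psi : E(R,I) \to E(R,I)$ be the $R$-isomorphism $(r,i) \mapsto (r + \varphi(i), -i)$ furnished by Lemma~\ref{imposter} applied to $\varphi$. A direct computation gives
\[
\Psi(J_\psi) = \{(\chi(k), -k) : k \in J\},
\]
where $\chi : J \to R$ is the additive $(R,R)$-bimodule map $\chi(j) = \varphi(j) - \psi(j)$. The twin hypotheses on $\varphi$ and $\psi$ yield $i\chi(j) = ij - ij = 0$ and $\chi(j)i = ji - ji = 0$ for all $i \in I$, $j \in J$, so $A := \chi(J)$ is an ideal of $R$ contained in $\ann_R(I)$.

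For the forward direction, I would first use primeness of $J_\psi$ (equivalently, of $\Psi(J_\psi)$) to force $\chi = 0$. Since $A \subseteq \ann_R(I)$, Lemma~\ref{dirsumideal}(2) makes $A \oplus 0$ an ideal of $E(R,I)$, and $(A \oplus 0)(0 \oplus I) = 0 \subseteq \Psi(J_\psi)$. Primeness then forces $A \oplus 0 \subseteq \Psi(J_\psi)$ or $0 \oplus I \subseteq \Psi(J_\psi)$. A short membership unpacking---$(a,0) \in \Psi(J_\psi)$ forces $k = 0$ and hence $a = 0$, while $(0,i) \in \Psi(J_\psi)$ forces $i \in J$ and $\chi(i) = 0$---shows that either alternative implies $\chi \equiv 0$, i.e., $\psi = \varphi$ on $J$. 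Then $\Psi(J_\psi) = 0 \oplus J$, so Lemma~\ref{diag} delivers $J = I$ and $R$ prime; combined with $\psi = \varphi$ on $J = I$, the full conclusion follows.

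The reverse direction is immediate: under those hypotheses, $J_\psi = I_\varphi = \Psi(0 \oplus I)$, which is prime because $E(R,I)/(0 \oplus I) \cong R$ is prime (this is also the content of Lemma~\ref{diag} for $J = I$). The main obstacle is extracting $\chi = 0$ from primeness of $J_\psi$; the whole argument hinges on the observation that $\chi$ lands in $\ann_R(I)$, which manufactures the annihilating product $(A \oplus 0)(0 \oplus I) = 0$ to which primeness is applied. Once $\chi = 0$ is in hand, the statement collapses cleanly onto Lemma~\ref{diag}.
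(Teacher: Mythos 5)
Your proof is correct and follows essentially the same route as the paper's: both hinge on the observation that $\varphi(j)-\psi(j)$ lands in $\ann_R(I)$, apply primeness to a zero product of ideals one of which is $0\oplus I$ to force $\psi=\varphi|_J$, and then invoke Lemma~\ref{diag}. Your preliminary conjugation by the automorphism $\Psi$ of Lemma~\ref{imposter} is a harmless (indeed slightly tidier) variant, since it lets you read off $\chi\equiv 0$ directly in the case $0\oplus I\subseteq\Psi(J_\psi)$ and thereby skip the paper's sub-argument deducing $\varphi(I)=0$ from $I^2=0$ and primeness of $R$.
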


\begin{proof}
It suffices to prove that if $J_\psi$ is prime, then $\psi$ is the restriction of $\varphi$ to $J$, since the desired result will then follow from Lemma~\ref{diag}.

Suppose that $J_\psi$ is prime, and let $j \in J$ be any element. Then for all $i \in I$, $\psi(j)i = ji = \varphi(j)i$, and hence $(\psi(j)-\varphi(j))I = 0$ (and similarly $I(\psi(j)-\varphi(j)) = 0$). Therefore $\psi(j)-\varphi(j) \in \ann_R(I)$. Now, since $(\ann_R(I) \oplus 0)I = 0$, and since $J_\psi$ is prime, either $I \subseteq J_\psi$ or $\ann_R(I) \oplus 0 \subseteq J_\psi$. In the first case, $J = I$, and $\psi(i) = 0$ for all $i \in I$, implying that $I^2 = 0$. Since $E(R,I)/J_\psi = E(R,I)/I \cong R$ is prime, this implies that $\varphi(I) = 0$, and in particular, $\varphi(i) = \psi(i)$ for all $i \in I$. Let us therefore assume that $\ann_R(I) \oplus 0 \subseteq J_\psi$. Then for all $j \in J$ we have $(\psi(j)-\varphi(j), 0) \in J_\psi$, implying that $\psi(j) = \varphi(j)$. Thus, in every case, $\psi$ is the restriction of $\varphi$ to $J$, as desired.
\end{proof}

We note that if $I$ is an $R$-rng and $\varphi : I \rightarrow R$ is an $R$-homomorphism such that for all $i, j \in I$ one has $i\varphi(j) = ij = \varphi(i)j$, then $J \subseteq I$ is an $R$-subrng of $I$ if and only if it is an $R$-ideal of $I$. This is because an $R$-ideal is always and $R$-subrng, and conversely, if $J$ is an $R$-subrng of $I$, then for all $i \in I$ and $j \in J$ we have $ji = j \varphi(i) \in J$, $ij = \varphi(i)j \in J$, showing that $IJ + JI \subseteq J$.

\begin{theorem}\label{primeideal}
Let $R$ be a ring, let $I$ be an $R$-rng, and let $\varphi : I \rightarrow R$ be an $R$-homomorphism such that for all $i, j \in I$ one has $i\varphi(j) = ij = \varphi(i)j$. An ideal $K \subseteq E(R, I)$ is prime if and only if it has one of the following two forms.
\begin{enumerate}
\item[$(1)$] $A \oplus I$ for some prime ideal $A$ of $R$.
\item[$(2)$] $\{(a, -i) : i \in I, a \in R, \text{ such that } a - \varphi(i) \in Z\}$, where $Z$ is a prime ideal of $R$ and $\varphi (I) \not\subseteq Z$.
\end{enumerate}
\end{theorem}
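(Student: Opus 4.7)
The approach is to exploit the ring automorphism $\sigma : E(R,I) \to E(R,I)$, $\sigma(r,i) = (r + \varphi(i), -i)$, given by Lemma~\ref{imposter}. Since $\sigma$ is an involution, it carries the ideal $0 \oplus I$ to the ideal $I_\varphi := \{(\varphi(i), -i) : i \in I\}$, and vice versa. The crucial computation is
\[
(0, i)(\varphi(j), -j) = (0,\, i\varphi(j) - ij) = 0 = (0,\, \varphi(j)i - ji) = (\varphi(j), -j)(0, i),
\]
where the middle equalities use the hypothesis $i\varphi(j) = ij = \varphi(i)j$. This shows $(0 \oplus I) \cdot I_\varphi = 0 = I_\varphi \cdot (0 \oplus I)$ in $E(R,I)$.

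For the forward direction, let $K$ be a prime ideal of $E(R,I)$. Since $(0 \oplus I) \cdot I_\varphi \subseteq K$ and both factors are ideals, primeness forces $0 \oplus I \subseteq K$ or $I_\varphi \subseteq K$. If $0 \oplus I \subseteq K$, then $K/(0 \oplus I)$ is a prime ideal of $E(R,I)/(0 \oplus I) \cong R$, whence $K = A \oplus I$ for some prime ideal $A \subseteq R$, giving form~(1). If instead $I_\varphi \subseteq K$, then applying $\sigma$ yields $0 \oplus I = \sigma(I_\varphi) \subseteq \sigma(K)$, and the first case applied to the prime ideal $\sigma(K)$ gives $\sigma(K) = A \oplus I$ for some prime ideal $A$; therefore
\[
K = \sigma(\sigma(K)) = \sigma(A \oplus I) = \{(b, -i) : b - \varphi(i) \in A,\ i \in I\},
\]
which equals $A \oplus I$ (form~(1)) if $\varphi(I) \subseteq A$, and otherwise is form~(2) with $Z = A$.

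For the converse, form~(1) is prime because $E(R,I)/(A \oplus I) \cong R/A$ is a prime ring whenever $A$ is a prime ideal of $R$. For form~(2), observe that the set in question equals $\sigma(Z \oplus I)$, the image under the ring automorphism $\sigma$ of the prime ideal $Z \oplus I$, and hence is itself prime. (That form~(2) is even an ideal also follows directly from Corollary~\ref{idealdescription2} applied to the composition $I \xrightarrow{\varphi} R \twoheadrightarrow R/Z$.)

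The single nontrivial step is recognizing the annihilation identity $(0 \oplus I) \cdot I_\varphi = 0 = I_\varphi \cdot (0 \oplus I)$; once this is spotted, the involution $\sigma$ reduces the full classification to the elementary subcase of prime ideals containing $0 \oplus I$, which correspond bijectively to prime ideals of $R$ via the isomorphism $E(R,I)/(0 \oplus I) \cong R$.
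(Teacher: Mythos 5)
Your proof is correct, and it takes a genuinely different and notably shorter route than the paper's. The paper proves this theorem by invoking the general structure theorem for ideals (Proposition~\ref{idealdescription}), analyzing the auxiliary ideal $B = \{r : rI + Ir \subseteq K\}$, showing that the kernel of the structure map is exactly the $\varphi$-preimage of $Z$, and then passing to a quotient $E(R/Z, I/\ker(\pi\varphi))$ where the induced map is injective so that Lemma~\ref{diag2} applies; the converse for form~(2) likewise goes through Lemma~\ref{diag2}. You instead isolate the identity $(0\oplus I)\cdot I_\varphi = 0 = I_\varphi\cdot(0\oplus I)$ (a direct consequence of $i\varphi(j)=ij=\varphi(i)j$), which forces every prime ideal to contain $0\oplus I$ or $I_\varphi$; the involution $\sigma(r,i)=(r+\varphi(i),-i)$ of Lemma~\ref{imposter} swaps these two ideals, so both cases reduce to the elementary bijection between prime ideals of $E(R,I)$ containing $0\oplus I$ and prime ideals of $R$, and the converse for form~(2) is immediate since that set is exactly $\sigma(Z\oplus I)$. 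Your argument bypasses Proposition~\ref{idealdescription} and Lemmas~\ref{diag} and~\ref{diag2} entirely, at the cost of not recording the slightly more general information those lemmas contain (namely, which ``graph'' ideals $J_\psi$ attached to partial maps $\psi : J \to R$ are prime); the paper's route keeps the theorem within the uniform machinery it uses elsewhere, while yours is the more economical and transparent proof of this particular statement.
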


\begin{proof}
As before, an ideal of the form (1) must be prime, since $E(R,I)/(A \oplus I) \cong R/A$. Now, suppose that $K \subseteq E(R, I)$ is an ideal of the form (2), that is $K = \{(a, -i) : i \in I, a \in R, a + Z = \pi \varphi(i) \}$, where $\pi : R \rightarrow R/Z$ is the natural projection. (We note that since $ZI + IZ \subseteq \{i \in I : \varphi(i) \in Z\} = \ker(\pi \varphi)$, for all $(a, -i) \in K$ and $j \in I$, $aj -ij = (a - \varphi(i))j$ and $ja - ji = j(a - \varphi(i))$ are elements of $\ker(\pi \varphi)$.) By Proposition~\ref{idealdescription}, $Z \oplus \ker(\pi \varphi)$ is a subideal of $K$. Hence $K$ is prime if and only if its image $\bar{K}$ in $E(R, I) / (Z \oplus \ker(\pi \varphi)) \cong E(R/Z, I/\ker(\pi \varphi))$ is prime. Now, the map $\pi \varphi$ restricts to an $R/Z$-homomorphism $\bar{\varphi} : I/\ker(\pi \varphi) \rightarrow R/Z$ that satisfies $i\bar{\varphi}(j) = ij = \bar{\varphi}(i)j$ for all $i,j \in I/\ker(\pi \varphi)$ (since $\bar{\varphi}$ is injective; cf.\ the remark at the beginning of the section). Noting that $R/Z$ is a prime ring and that $\bar{K} = \{(\bar{\varphi}(i),-i) : i \in I/\ker(\pi \varphi)\}$, and applying Lemma~\ref{diag2} to $E(R/Z, I/ \ker(\pi \varphi))$ we conclude that $\bar{K}$, and hence also $K$, must be a prime ideal.

For the converse, suppose that $K$ is a prime ideal of $E(R,I)$, and write $K = \{(a, -j) : a \in A, j \in J, a + Z = \psi(i)\}$ for appropriate $A$, $J$, $Z$, and $\psi$ (as specified in Proposition~\ref{idealdescription}). Let $B = \{r \in E(R,I) : rI + Ir \subseteq K\}$. Then $B$ is an ideal of $E(R,I)$, and since $BI \subseteq K$, we must have either $B \subseteq K$ or $I \subseteq K$. If $I \subseteq K$, then $K = A \oplus I$, and $A$ must be a prime ideal of $R$, since $E/K = E(R,I)/(A \oplus I) \cong R/A$ is a prime ring. Thus, in this case $K$ is of the form (1). Let us therefore assume that $B \subseteq K$, and let $j \in \ker(\psi)$ and $i \in I$ be any elements. Then $\varphi(j)i = ji \in \ker(\psi)$ and $i\varphi(j) = ij \in \ker(\psi)$, since $\ker(\psi)$ is an $R$-ideal of $I$ (by Proposition~\ref{idealdescription}). In particular, this shows that $\varphi(\ker(\psi))I + I\varphi(\ker(\psi))\subseteq K$, and hence $\varphi(\ker(\psi)) \oplus 0 \subseteq B \subseteq K$. More specifically $\varphi(\ker(\psi)) \subseteq Z$. Now, let $L \subseteq I$ be the preimage of $Z$ under $\varphi$, and let $l \in L$, $i \in I$ be any elements. Then $li = \varphi(l)i \in ZI \subseteq \ker(\psi)$ and similarly $il \in \ker(\psi)$. Hence, $0\oplus L \subseteq B \subseteq K$, showing that $L \subseteq \ker(\psi)$. We conclude that $\ker(\psi)$ is the preimage of $Z$ under $\varphi$.

Since $K$ is prime, its image $\bar{K}$ in $E(R, I) / (Z \oplus \ker(\psi)) \cong E(R/Z, I/\ker(\psi))$ is prime. Since $\ker(\psi)$ is the preimage of $Z$ under $\varphi$, and since $ZI + IZ \subseteq \ker(\psi)$, $\varphi$ restricts to an $R/Z$-homomorphism $\bar{\varphi} : I/\ker(\psi) \rightarrow R/Z$ that satisfies $i\bar{\varphi}(j) = ij = \bar{\varphi}(i)j$ for all $i,j \in I/\ker(\psi)$ (since $\bar{\varphi}$ is injective). Also, $\psi$ restricts to an $R/Z$-homomorphism $\bar{\psi} : J/ \ker(\psi) \rightarrow R/Z$ that satisfies $i\bar{\psi}(j) = ij$, $\bar{\psi}(j)i = ji$ for all $i \in I/\ker(\psi)$, $j \in J/\ker(\psi)$. We note that $\bar{K} = \{(\bar{\psi}(j), -j) : j \in J/\ker(\psi)\}$. By Lemma~\ref{diag2}, $R/Z$ is prime, $J/\ker(\psi) = I/\ker(\psi)$, and $\bar{\varphi}(i) = \bar{\psi}(i)$ for all $i \in I/\ker(\psi)$. Thus, $Z$ is a prime ideal of $R$, and since $\ker(\psi) \subseteq J$, we conclude that $J = I$. Further, for all $i \in I$, we have $\psi(i) = \pi \varphi (i)$, where $\pi : R \rightarrow R/Z$ is the natural projection. Therefore, if $\varphi(I) \not\subseteq Z$, then $K$ is of the form (2). Otherwise, $\psi (I) = 0$, and $K$ is of the form (1), by Proposition~\ref{idealdescription}.
\end{proof}

From the above result we can quickly obtain a classification of the maximal ideals of $E(R,I)$ in the situation under consideration.

\begin{corollary} \label{maximalideal}
Let $R$ be a ring, let $I$ be an $R$-rng, and let $\varphi : I \rightarrow R$ be an $R$-homomorphism such that for all $i, j \in I$ one has $i\varphi(j) = ij = \varphi(i)j$. An ideal $K \subseteq E(R, I)$ is maximal if and only if it has one of the following two forms.
\begin{enumerate}
\item[$(1)$] $A \oplus I$ for some maximal ideal $A$ of $R$.
\item[$(2)$] $\{(a, -i) : i \in I, a \in R, \text{ such that } a - \varphi(i) \in Z\}$, where $Z$ is a maximal ideal of $R$ and $\varphi (I) \not\subseteq Z$.
\end{enumerate}
\end{corollary}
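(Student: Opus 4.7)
The plan is to extract the maximal-ideal classification from the prime-ideal classification in Theorem~\ref{primeideal} together with a short quotient computation. Every maximal ideal is prime, so Theorem~\ref{primeideal} already forces a maximal $K \subseteq E(R,I)$ to have one of the two listed forms; it then only remains to determine, within each family, exactly when the ideal is maximal.

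For form (1), with $K = A \oplus I$, the canonical projection $E(R,I) \to R/A$, $(r,i) \mapsto r + A$, is a surjective ring homomorphism with kernel precisely $A \oplus I$. Hence $E(R,I)/K \cong R/A$, and so $K$ is maximal iff $A$ is a maximal ideal of $R$.

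For form (2), I would first rewrite the defining condition: a generic element $(r,s) \in E(R,I)$ lies in $K$ iff $r + \varphi(s) \in Z$ (set $a = r$ and $i = -s$). Then I would define $f : E(R,I) \to R/Z$ by $f(r,s) = (r + \varphi(s)) + Z$. Additivity is immediate, and multiplicativity follows from $\varphi(ss') = \varphi(s)\varphi(s')$, which holds because $\varphi$ is a rng homomorphism (and is also forced by $ss' = \varphi(s)s'$ together with left $R$-linearity of $\varphi$). The map $f$ is surjective via $f(r,0) = r + Z$ and has kernel exactly $K$, so $E(R,I)/K \cong R/Z$, and $K$ is maximal iff $Z$ is maximal. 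As an alternative, one can apply the automorphism $\psi(r,i) = (r + \varphi(i), -i)$ of Lemma~\ref{imposter}: a direct check shows $\psi(Z \oplus I) = K$, reducing the claim to the form (1) case with $A = Z$.

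I do not expect any real obstacle: the substantive work has been done in Theorem~\ref{primeideal}, and what is left is a routine quotient identification in each of the two cases. The one subtlety to flag is that the side condition $\varphi(I) \not\subseteq Z$ in form (2) is only there to keep the two families disjoint (when $\varphi(I) \subseteq Z$, the would-be form (2) ideal collapses to $Z \oplus I$, already described by form (1)); it plays no role in the maximality equivalence itself.
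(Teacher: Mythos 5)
Your proof is correct, and while its skeleton matches the paper's --- every maximal ideal is prime, so Theorem~\ref{primeideal} supplies the two candidate forms, after which one checks maximality within each family --- your handling of form (2) is genuinely more direct. The paper passes to the quotient $E(R,I)/(Z \oplus \ker(\pi\varphi)) \cong E(R/Z, I/\ker(\pi\varphi))$ and invokes Lemma~\ref{imposter} there to recognize $\bar{K}$ as the image of the maximal ideal $I/\ker(\pi\varphi)$ under the automorphism $\psi$; for the ``only if'' direction it separately argues that a non-maximal $Z$ would yield a strictly larger proper ideal of the same shape. Your single observation that $f(r,s) = (r+\varphi(s)) + Z$ is a surjective ring homomorphism $E(R,I) \rightarrow R/Z$ with kernel exactly $K$ (multiplicativity using that $\varphi$ is both a rng and a bimodule homomorphism, so the cross terms $\varphi(sr')=\varphi(s)r'$ and $\varphi(rs')=r\varphi(s')$ also come out right) gives $E(R,I)/K \cong R/Z$ in one step, settling both directions of the maximality equivalence simultaneously and showing $K$ is an ideal for free. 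Your alternative route, $\psi(Z\oplus I) = K$ with the automorphism of Lemma~\ref{imposter} applied globally rather than in a quotient, is essentially the paper's idea with the intermediate quotient stripped out. Your closing remark that $\varphi(I)\not\subseteq Z$ serves only to keep the two families disjoint (the form (2) set collapsing to $Z\oplus I$ otherwise) is also accurate and matches the paper's parenthetical treatment of that case.
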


\begin{proof}
By Theorem~\ref{primeideal} every maximal ideal of $E(R,I)$ is of the form $\{(a, -i) : i \in I, a \in R, a - \varphi(i) \in A\}$, where $A$ is a prime ideal of $R$ (if $\varphi (I) \subseteq A$, then this is of the form (1) of the theorem). Further, $A$ must be a maximal ideal of $R$, since otherwise $A \subset M$, for some maximal ideal $M$ of $R$, and in this case, $$\{(a, -i) : i \in I, a \in R, a - \varphi(i) \in A\} \subset \{(a, -i) : i \in I, a \in R, a - \varphi(i) \in M\},$$ which is a proper ideal of $E(R,I)$. It follows that every maximal ideal of $E(R,I)$ is of one of the two forms in the statement.

Conversely, an ideal of the form (1) is maximal, since $E(R,I)/(A \oplus I) \cong R/A$. Now, let $K$ be of the form (2), that is $K = \{(a, -i) : i \in I, a \in R, a + Z = \pi \varphi(i) \}$, where $\pi : R \rightarrow R/Z$ is the natural projection. By Proposition~\ref{idealdescription}, $Z \oplus \ker(\pi \varphi)$ is a subideal of $K$. Hence $K$ is maximal if and only if its image $\bar{K}$ in $E(R, I) / (Z \oplus \ker(\pi \varphi)) \cong E(R/Z, I/\ker(\pi \varphi))$ is maximal. Now, the map $\pi \varphi$ restricts to an $R/Z$-homomorphism $\bar{\varphi} : I/\ker(\pi \varphi) \rightarrow R/Z$ that satisfies $i\bar{\varphi}(j) = ij = \bar{\varphi}(i)j$ for all $i,j \in I/\ker(\pi \varphi)$. Also, $R/Z$ is a simple ring and $\bar{K} = \{(\bar{\varphi}(i),-i) : i \in I/\ker(\pi \varphi)\}$. Hence, applying Lemma~\ref{imposter} to $E(R/Z, I/ \ker(\pi \varphi))$ we conclude that $\bar{K}$ a maximal ideal (since $I/\ker(\pi \varphi)$ is a maximal ideal in $E(R/Z, I/ \ker(\pi \varphi))$ and $\bar{K} = (I/ \ker(\pi \varphi))_{\bar{\varphi}}$). Therefore, $K$ must also be a maximal ideal.
\end{proof}

If the $E(R,I)$ in the above situation is commutative, then we can characterize when it is local.

\begin{corollary} \label{local}
Let $R$ be a ring, let $I$ be an $R$-rng, and let $\varphi : I \rightarrow R$ be an $R$-homomorphism such that for all $i, j \in I$ one has $i\varphi(j) = ij = \varphi(i)j$. Further, suppose that $E(R,I)$ is commutative. Then $E(R,I)$ is local if and only if $R$ is local and $\varphi(I) \neq R$.
\end{corollary}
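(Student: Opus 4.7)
The plan is to derive everything directly from the classification of maximal ideals provided by Corollary~\ref{maximalideal}. Since $E(R,I)$ is commutative, being local just means having a unique maximal ideal, so I need to count the maximal ideals of forms (1) and (2) in that corollary.

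A preliminary observation I will need is that $\varphi(I)$ is an ideal of $R$. Indeed, since $\varphi$ is an $(R,R)$-bimodule homomorphism, $R\varphi(I) = \varphi(RI) \subseteq \varphi(I)$ and $\varphi(I)R \subseteq \varphi(I)$, so $\varphi(I)$ is closed under multiplication by $R$ on both sides.

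For the ``if'' direction, assume $R$ is local with unique maximal ideal $\mathfrak{m}$ and $\varphi(I) \neq R$. Any maximal ideal of $E(R,I)$ of type (1) must be $\mathfrak{m} \oplus I$, since $\mathfrak{m}$ is the only candidate for $A$. For type (2), the required maximal ideal $Z$ of $R$ must again be $\mathfrak{m}$, but then $\varphi(I)$ is a proper ideal of $R$ (by the preliminary observation together with $\varphi(I) \neq R$), hence $\varphi(I) \subseteq \mathfrak{m} = Z$, which violates the condition $\varphi(I) \not\subseteq Z$ in the corollary. So no type (2) maximal ideal exists, and $\mathfrak{m} \oplus I$ is the unique maximal ideal of $E(R,I)$.

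For the ``only if'' direction, suppose $E(R,I)$ is local. If $R$ had two distinct maximal ideals $A_1 \neq A_2$, then $A_1 \oplus I$ and $A_2 \oplus I$ would be two distinct maximal ideals of $E(R,I)$ by form (1), a contradiction; hence $R$ is local with unique maximal ideal $\mathfrak{m}$. Suppose further, for contradiction, that $\varphi(I) = R$. Then $\varphi(I) \not\subseteq \mathfrak{m}$, so the set $K = \{(a,-i) : a - \varphi(i) \in \mathfrak{m}\}$ is a maximal ideal of type (2). To conclude I need $K \neq \mathfrak{m} \oplus I$: pick $i_0 \in I$ with $\varphi(i_0) = 1$, then $(1, -i_0) \in K$ but $(1, -i_0) \notin \mathfrak{m} \oplus I$ since $1 \notin \mathfrak{m}$, giving two distinct maximal ideals and contradicting locality. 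Therefore $\varphi(I) \neq R$.

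The proof is essentially bookkeeping against Corollary~\ref{maximalideal}; the one point requiring slight attention is the observation that $\varphi(I)$ is an ideal (so that ``$\varphi(I) \neq R$'' forces ``$\varphi(I) \subseteq \mathfrak{m}$'' in a local ring), and the explicit witness $(1,-i_0)$ separating the two potential maximal ideals when $\varphi$ is surjective.
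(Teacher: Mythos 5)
Your proof is correct and follows essentially the same route as the paper: both directions are read off from the classification of maximal ideals in Corollary~\ref{maximalideal}, using that $\varphi(I)$ is a (proper) ideal of $R$ and hence lies in the unique maximal ideal of a local $R$. Your explicit witness $(1,-i_0)$ distinguishing the type~(2) ideal from $\mathfrak{m}\oplus I$ is a small extra verification the paper leaves implicit, but it is not a different argument.
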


\begin{proof}
Suppose that $E(R,I)$ is local, with maximal ideal $M$. Since $0\oplus I$ is a proper ideal of $E(R,I)$, it must be contained in $M$. Hence $M = A \oplus I$, for some ideal $A$ of $R$, which clearly must be maximal. On the other hand, if $B$ is a maximal ideal of $R$ different from $A$, then $B \oplus I$ must be a maximal ideal of $E(R,I)$ different from $M$. Hence, $R$ must be local, with maximal ideal $A$. Further, if it were the case that $\varphi(I) = R$, then, by Corollary~\ref{maximalideal}, $\{(a, -i) : i \in I, a \in R, a - \varphi(i) \in A\}$ would be a maximal ideal distinct from $A \oplus I$, contradicting our assumption that $E(R,I)$ is local. Hence $\varphi(I) \neq R$.

Conversely, suppose that $R$ is local, with maximal ideal $A$, and $\varphi(I) \neq R$. Then $A \oplus I$ is a maximal ideal of $E(R,I)$, and by Corollary~\ref{maximalideal}, it is the only one, since $\varphi(I)$ is a proper ideal of $R$, and hence, $\varphi(I) \subseteq A$.
\end{proof}

Theorem~\ref{primeideal} (together with Corollary~\ref{local}, Corollary~\ref{centralsemiprime}, and the final claim of Theorem~\ref{semiprime}) generalizes \cite[Theorem 3.5]{DF}, which gives a classification of the prime ideals of $E(R, I)$ in the case where $R$ is a commutative ring and $I$ is an ideal of $R$, though the authors of \cite{DF} use different notation from ours. (In \cite{DF} the classification is described in $\{(r, r+i) : r \in R, i \in I\} \subseteq R \times R$, which is an isomorphic copy of $E(R,I)$, assuming that $I \subseteq R$. To see that Theorem~\ref{primeideal} generalizes \cite[Theorem 3.5]{DF} one would take $\varphi : I  \rightarrow R$ to be the natural embedding and apply the isomorphism $E(R,I) \rightarrow  \{(r, r+i) : r \in R, i \in I\}$ given by $(r,i) \mapsto (r, r+i)$.) We note that Theorem~\ref{primeideal} is proved by very different methods than \cite[Theorem 3.5]{DF}, since one of the main ingredients of the proof of the latter is localization, which is unavailable in the noncommutative setting.

For the sake of completeness, let us now give an easy example showing that in general, an $R$-homomorphism $\varphi : I \rightarrow R$ need not satisfy  $i\varphi(j) = ij = \varphi(i)j$ for all $i, j \in I$, and, moreover, there may not be any $R$-homomorphisms $I \rightarrow R$ with that property.

\begin{example}
Let $R = \Z$ and $I = \Z \oplus \Z$. Then $\varphi_1 : I \rightarrow R$, defined by $\varphi_1 (a, b) = a$, and $\varphi_2 : I \rightarrow R$, defined by $\varphi_2 (a, b) = b$, for all $a, b \in \Z$, are both ($\Z$-)homomorphisms. Further, for instance, $(\varphi_1(1,2))(3,4) = (3,4) \neq (3,8) = (1,2)(3,4)$, and $(\varphi_2(1,2))(3,4) = (6,8) \neq (3,8) = (1,2)(3,4)$. It is also easy to see that $\varphi_1$ and $\varphi_2$ are the only nonzero rng homomorphisms $I \rightarrow R$. This follows from the fact that any such homomorphism must take the idempotents $(1,0)$ and $(0,1)$ to idempotents in $\Z$, and $1$ and $0$ are the only ones. Thus, if $\varphi : I \rightarrow R$ is a nonzero rng homomorphism other than $\varphi_1$ and $\varphi_2$, then we must have $\varphi(1,0) = 1$ and $\varphi(0,1) = 1$. But, in this case, $\varphi(1,1) = 2$, which cannot happen, since $(1,1)$ is an idempotent.
\end{example}

Incidentally, the prime ideals of $E(R,I)$, where $I$ is of the sort constructed in the above example, can be described using Theorem~\ref{primeideal}, as the next corollary shows.

\begin{corollary}
Let $R$ be a ring, let $\Delta$ be a set, and for each $\delta \in \Delta$, let $J_\delta$ be an $R$-rng such that there is an $R$-homomorphism $\varphi_\delta : J_\delta \rightarrow R$ satisfying $\varphi_\delta(j)j' = jj' = j\varphi_\delta(j')$ for all $j, j' \in J_\delta$. Set $I = \bigoplus_{\delta \in \Delta} J_\delta$, and for each $\delta \in \Delta$ let $\pi_\delta : I \rightarrow J_\delta$ be the natural projection. Then an ideal $K \subseteq E(R, I)$ is prime if and only if it has one of the following two forms.
\begin{enumerate}
\item[$(1)$] $A \oplus I$ for some prime ideal $A$ of $R$.
\item[$(2)$] $\{(a, -i) : i \in I, a \in R, \text{ such that } a - \varphi_\delta\pi_\delta(i) \in Z\}$ for some $\delta \in \Delta$, where $Z$ is a prime ideal of $R$ and $\varphi_\delta(J_\delta) \not\subseteq Z$.
\end{enumerate}
\end{corollary}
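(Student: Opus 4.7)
The plan is to reduce to Theorem~\ref{primeideal} by quotienting out all but one of the summands. The key preliminary observation is that the direct-sum structure forces the $J_\delta$'s to be mutually orthogonal ideals in $I$: if $j \in J_\delta$ and $j' \in J_{\delta'}$ with $\delta \neq \delta'$, then $jj' = \varphi_\delta(j) j' \in J_{\delta'}$ and also $jj' = j \varphi_{\delta'}(j') \in J_\delta$, so $jj' \in J_\delta \cap J_{\delta'} = 0$. A similar short calculation shows that each $J_\delta$ is an $R$-ideal of $I$: for $j \in J_\delta$ and $i = \sum i_\eta \in I$, one has $ji = \varphi_\delta(j) i_\delta \in J_\delta$, using that $\varphi_\delta(j) i_\eta = j i_\eta = 0$ for $\eta \neq \delta$. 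Hence each $0 \oplus J_\delta$ is an ideal of $E(R,I)$ by Lemma~\ref{dirsumideal}, and these ideals satisfy $(0 \oplus J_\delta)(0 \oplus J_{\delta'}) = 0$ for $\delta \neq \delta'$.

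Now let $K$ be a prime ideal of $E(R,I)$. By the orthogonality above, in the prime ring $E(R,I)/K$ at most one image of $0 \oplus J_\delta$ is nonzero, so either $I \subseteq K$ or there is a unique $\delta_0 \in \Delta$ with $J_{\delta_0} \not\subseteq K$, and all other $J_\delta \subseteq K$. In the first case $E(R,I)/K \cong R/A$ where $A$ is the set of first coordinates, forcing $A$ to be a prime ideal of $R$ and putting $K$ in form (1). In the second case, let $I' = \bigoplus_{\delta \neq \delta_0} J_\delta$, which is an $R$-ideal of $I$ contained in the second-coordinate part of $K$. The quotient map yields a ring isomorphism $E(R,I)/(0 \oplus I') \cong E(R, J_{\delta_0})$, and $K/(0 \oplus I')$ is a prime ideal there.

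Since $\varphi_{\delta_0} : J_{\delta_0} \to R$ satisfies the hypothesis of Theorem~\ref{primeideal}, that theorem classifies this quotient prime ideal as either $A \oplus J_{\delta_0}$ (prime $A \subseteq R$) or $\{(a, -j) : j \in J_{\delta_0},\ a - \varphi_{\delta_0}(j) \in Z\}$ for a prime $Z$ of $R$ with $\varphi_{\delta_0}(J_{\delta_0}) \not\subseteq Z$. Pulling back along $E(R,I) \twoheadrightarrow E(R, J_{\delta_0})$, the first possibility gives $A \oplus I$, which contradicts $J_{\delta_0} \not\subseteq K$, and so is excluded here; the second possibility pulls back to exactly $\{(a, -i) : i \in I,\ a \in R,\ a - \varphi_{\delta_0}\pi_{\delta_0}(i) \in Z\}$ since every $i \in I$ decomposes as $\pi_{\delta_0}(i) + i'$ with $i' \in I'$. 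This is precisely form (2).

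Conversely, ideals of form (1) are prime since $E(R,I)/(A \oplus I) \cong R/A$, and ideals of form (2) are prime by reversing the argument above: such a $K$ contains $0 \oplus I'$, and its image in $E(R, J_{\delta_0})$ is an ideal of form (2) of Theorem~\ref{primeideal}, hence prime. The main subtlety is simply keeping track of the pullback in the second case and verifying that the requirement $\varphi_{\delta_0}(J_{\delta_0}) \not\subseteq Z$ is exactly what prevents the two forms from collapsing into each other; there are no serious obstacles once the orthogonality of the $J_\delta$'s is observed.
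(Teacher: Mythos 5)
Your proof is correct and is essentially the paper's own argument: the summands annihilate one another, so a prime ideal $K$ either contains $I$ (form (1)) or contains $\bigoplus_{\delta\neq\delta_0}J_\delta$ for a unique $\delta_0$, whereupon one passes to $E(R,I)/(0\oplus\bigoplus_{\delta\neq\delta_0}J_\delta)\cong E(R,J_{\delta_0})$ and applies Theorem~\ref{primeideal}. One small remark: the orthogonality $J_\delta J_{\delta'}=0$ for $\delta\neq\delta'$ is immediate from the componentwise multiplication on $\bigoplus_{\delta}J_\delta$, whereas your derivation via $jj'=\varphi_\delta(j)j'$ applies that identity outside the scope in which it is hypothesized (namely $j,j'\in J_\delta$), so it is better to cite the direct-sum structure itself, as the paper does.
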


\begin{proof}
The statement follows from Theorem~\ref{primeideal}. To see this, for each $\alpha \in \Delta$ let $\bigoplus_{\delta \neq \alpha}J_\delta$ denote the $R$-subideal of $\bigoplus_{\delta \in \Delta} J_\delta$ consisting of all elements with zero $\alpha$-component, and identify $J_\alpha$ with the $R$-subideal of $\bigoplus_{\delta \in \Delta} J_\delta$ consisting of all elements with zeros in components other than the $\alpha$-component. Then, if $K$ is a prime ideal of $E(R,I)$, we must have $\bigoplus_{\delta \neq \alpha}J_\delta \subseteq K$ for some $\alpha \in \Delta$ (since $J_\beta(\bigoplus_{\delta \neq \beta}J_\delta) = 0$ for each $\beta \in \Delta$), and $E(R, I/(\bigoplus_{\delta \neq \alpha}J_\delta)) \cong E(R, J_\alpha)$ satisfies the hypotheses of the theorem.
\end{proof}

\section{Left ideals}\label{leftidealsection}

We conclude this note with a brief discussion of the left ideals of $E(R,I)$. We will first give an abbreviated version of Proposition~\ref{idealdescription} for left ideals and then use it to describe when the ring is left noetherian and when it is left artinian.

\begin{definition}
Let $R$ be a ring, and let $I$ be an $R$-rng. We say that $J \subseteq I$ is a {\em left $R$-ideal} of $I$ if $J$ is a left ideal in the rng $I$ and is also a left $R$-submodule.
\end{definition}

\begin{proposition} \label{leftidealdescription}
Let $R$ be a ring, and let $I$ be an $R$-rng.  Then every left ideal of $E(R,I)$ must be of the form $$K = \{(a, -j): a \in A, j \in J, \text{ such that } a+Z = \varphi(j)\},$$ where $Z \subseteq A$ are left ideals of $R$,  $J$ is a left $R$-submodule of $I$, and $\varphi: J \rightarrow A/Z$ is a surjective homomorphism of left $R$-modules, such that for all $(a,-j) \in K$ and $i \in I$ one has $ia-ij \in \ker(\varphi)$. 
 
Conversely, given $A$, $J$, $Z$, and $\varphi$ as above, $K = \{(a, -j): a \in A, j \in J, \text{ such that } a+Z = \varphi(j)\}$ is a left ideal of $E(R,I)$.

Further, in the above situation, $\ker(\varphi)$ is a left $R$-ideal of $I$.
\end{proposition}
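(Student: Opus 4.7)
The plan is to adapt the proof of Proposition~\ref{idealdescription} almost verbatim, stripping away every step that used right multiplication. Given a left ideal $K \subseteq E(R,I)$, I would set $A = \{r \in R : (r,i) \in K \text{ for some } i \in I\}$, $J = \{i \in I : (r,i) \in K \text{ for some } r \in R\}$, and $Z = \{r \in R : (r,0) \in K\}$. Because $K$ is closed under addition and under left multiplication by $R \oplus 0$, one immediately sees that $A$ and $Z$ are left ideals of $R$ and that $J$ is a left $R$-submodule of $I$. Notice that here we do \emph{not} need $J$ to be closed under multiplication, so the identity $(r,i)(r',i') - (r,i)r' - r(r',i') = (-rr', ii')$ used in the original argument is not needed, which actually makes this direction a bit simpler.

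Next I would define $\varphi : J \to A/Z$ by $\varphi(j) = a + Z$, where $(a,-j) \in K$. Well-definedness follows because if $(a,-j), (a',-j) \in K$ then $(a-a',0) \in K$, so $a-a' \in Z$. To check that $\varphi$ is a left $R$-module homomorphism I would use that, for $(a,-j), (a',-j') \in K$ and $r \in R$, the elements $(a+a', -(j+j'))$ and $(r,0)(a,-j) = (ra, -rj)$ lie in $K$; reducing modulo $Z$ gives additivity and left $R$-linearity. That $K$ equals the displayed set is immediate from the inclusion $Z \oplus 0 \subseteq K$. The one-sided condition $ia - ij \in \ker(\varphi)$ then comes from the fact that $(0,i)(a,-j) = (0, ia - ij) \in K$, and the second coordinates of elements of $K$ whose first coordinate is $0$ are precisely $\ker(\varphi)$.

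For the converse direction, given the data $(A, J, Z, \varphi)$ satisfying the hypotheses, the set $K$ is visibly a left $R$-submodule of $E(R,I)$ since $A$, $J$, $Z$ have the required $R$-module structure and $\varphi$ is $R$-linear. Because $R + I = E(R,I)$, to see $K$ is a left ideal it suffices to check $IK \subseteq K$, which is exactly what the hypothesis $ia - ij \in \ker(\varphi)$ guarantees: computing $(0,i)(a,-j) = (0, ia - ij)$ and observing $0 + Z = \varphi(ia - ij)$ shows this element lies in $K$.

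The final claim that $\ker(\varphi)$ is a left $R$-ideal of $I$ is a short afterthought: it is a left $R$-submodule because $\varphi$ is $R$-linear, and for any $j \in \ker(\varphi)$ and $i \in I$ we have $(0, -j) \in K$ (take $a = 0$ in the description of $K$), so the condition specializes to $i \cdot 0 - ij = -ij \in \ker(\varphi)$, giving $ij \in \ker(\varphi)$ as required. I do not anticipate any serious obstacle here; the only point requiring a bit of care is keeping the one-sided conditions straight and noticing that the loss of right multiplication removes, rather than adds, work compared to Proposition~\ref{idealdescription}.
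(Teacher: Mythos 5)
Your proposal is correct and follows essentially the same route as the paper's proof: the same definitions of $A$, $J$, $Z$, and $\varphi$, the same verification via $(r,0)(a,-j)=(ra,-rj)$ and $(0,i)(a,-j)=(0,ia-ij)$, and the same reduction of the converse to checking $IK\subseteq K$. The only cosmetic difference is in the final claim, where the paper notes $\ker(\varphi)=K\cap I$ while you verify the left $R$-ideal property directly; both are fine.
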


\begin{proof}
Suppose that $K$ is a left ideal of $E(R,I)$, and let $A = \{r \in R : (r, i) \in K \text{ for some } i \in I\}$. It is easy to see that $A$ must be a left ideal of $R$. It is also clear that $J = \{i \in I : (r,i) \in K \text{ for some } r \in R\}$ is a left $R$-submodule of $I$. Setting $Z = \{r \in R: (r,0) \in K\}$, we see that $Z$ is a left ideal of $R$ contained in $A$. We next define a map $\varphi: J \rightarrow A/Z$ as follows.  Given $j \in J$, we can find some $a \in A$ such that $(a,-j) \in K$.  Let $\varphi(j)$ be the image of $a$ in $A/Z$ under the natural projection $A \rightarrow A/Z$.  To see that $\varphi$ is well-defined, it suffices to note that if $(a,-j),(a',-j) \in K$, then $(a-a',0) \in K$, and hence $a-a' \in Z$.  To see that $\varphi$ is a homomorphism of left $R$-modules, we note that if $(a,-j),(a',-j') \in K$ and $r \in R$, then $(a+a',-(j+j'))$ and $(ra,-rj)$ are elements of $K$. Also, $\varphi$ is clearly surjective. It is now easy to see that $K = \{(a,-j): a \in A, j \in J, a + Z = \varphi(j)\}$. Finally, if $(a,-j) \in K$ and $i \in I$, then $(0,i)(a,-j) = (0,ia-ij)$ is an element of $K$, and hence $ia-ij \in \ker(\varphi)$.

For the converse, let $K = \{(a, -j): a \in A, j \in J, a+Z = \varphi(j)\}$, for appropriate $A$, $J$, $Z$, and $\varphi$. Since $\varphi$ is a homomorphism of left $R$-modules, it is straightforward to see that $K$ is a left $R$-submodule of $E(R,I)$. Since $R+I = E(R,I)$, to finish showing that $K$ is a left ideal of $E(R,I)$, it will suffice to check that $IK 
\subseteq K$.  Thus, let $i \in I$ and $(a,-j) \in K$ be any elements.  Then, by hypothesis, $(0,i)(a,-j) = (0,ia-ij) \in \ker(\varphi) \subseteq K$, as desired. 

The final claim follows from the fact that $\ker(\varphi) = K \cap I$, and both are left ideals in $E(R,I)$.  
\end{proof}

In the above statement, $J$ need not be a subrng of $I$, in contrast with the case of (two-sided) ideals, as the next example shows.

\begin{example}
Let $S$ be any nonzero ring, let $R = S[x]$, and let $I = S\langle x, y \rangle$ (where we view $R$ as a subring of $I$). Also, let $J = \{f + gy : f \in R, g \in I\}$. Then $J$ is a left $R$-submodule, but it is not a subrng of $I$, since, for instance, $x, y \in J$ but $yx \not\in J$. Now, let $\varphi : J \rightarrow R$ be defined by $\varphi(f + gy) = f$. It is clear that $\varphi$ is a surjective homomorphism of left $R$-modules. Hence, by the above proposition, $K = \{(f, -f-gy) : f \in R, g \in I \}$ is a left ideal of $E(R,I)$, since for all $h \in I$ and $(f, -f-gy) \in K$, we have $hf - h(f+gy) = -hgy \in \ker(\varphi)$. 
\end{example}

Let us now turn to the question of when $E(R,I)$ is left noetherian and when it is left artinian.

\begin{definition}
Let $R$ be a ring, and let $I$ be an $R$-rng. We say that $I$ is {\em left noetherian as an $R$-rng} if the family of all left $R$-ideals of $I$ satisfies ACC $($i.e., the ascending chain condition\,$)$. Also, we say that $I$ is {\em left artinian as an $R$-rng} if the family of all left $R$-ideals of $I$ satisfies DCC $($i.e., the descending chain condition\,$)$.
\end{definition}

The following generalizes \cite[Corollary 2.9]{DF}.

\begin{proposition}\label{left-noeth-artin}
Let $R$ be a ring, and let $I$ be an $R$-rng. Then the following hold.
\begin{enumerate}
\item[$(1)$] $E(R,I)$ is left noetherian if and only if $R$ is left noetherian and $I$ is left noetherian as an $R$-rng.
\item[$(2)$] $E(R,I)$ is left artinian if and only if $R$ is left artinian and $I$ is left artinian as an $R$-rng.
\end{enumerate}
\end{proposition}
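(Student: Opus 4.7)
The plan is to use Proposition~\ref{leftidealdescription} to translate chain conditions on left ideals of $E(R,I)$ into chain conditions on left ideals of $R$ and on left $R$-ideals of $I$ simultaneously. Since the argument for (1) and (2) is the same after replacing ``ascending'' by ``descending'', I will only describe the noetherian case.

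For the forward implication, I would observe that $R \cong E(R,I)/(0\oplus I)$ as rings, so any ascending chain of left ideals of $R$ lifts to an ascending chain of left ideals of $E(R,I)$ containing $0\oplus I$, and therefore stabilizes. For $I$ as an $R$-rng, if $J$ is a left $R$-ideal of $I$ then $0\oplus J$ is a left ideal of $E(R,I)$ (direct check: $(r,i)(0,j)=(0,rj+ij)\in 0\oplus J$), so an ascending chain of left $R$-ideals of $I$ yields an ascending chain of left ideals of $E(R,I)$, and stabilizes.

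For the converse, assume $R$ is left noetherian and $I$ is left noetherian as an $R$-rng, and let $K_1\subseteq K_2\subseteq\cdots$ be an ascending chain of left ideals of $E(R,I)$. Writing each $K_n$ via Proposition~\ref{leftidealdescription} with data $(A_n,J_n,Z_n,\varphi_n)$, one checks directly from the inclusions $K_n\subseteq K_{n+1}$ that $A_n\subseteq A_{n+1}$ and $Z_n\subseteq Z_{n+1}$ are ascending chains of left ideals of $R$, and that $\ker(\varphi_n)\subseteq\ker(\varphi_{n+1})$ is an ascending chain of left $R$-ideals of $I$ (here one uses that $j\in\ker(\varphi_n)$ means $(0,-j)\in K_n\subseteq K_{n+1}$). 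By hypothesis all three chains stabilize, say from index $N$.

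The key step is then to deduce that $K_n$ itself stabilizes from index $N$. Given $j\in J_{n+1}$, pick $a\in A_{n+1}=A_N$ with $(a,-j)\in K_{n+1}$; since $a\in A_n$ there is some $j'\in J_n$ with $(a,-j')\in K_n\subseteq K_{n+1}$. Subtracting yields $(0,-(j-j'))\in K_{n+1}$, hence $j-j'\in\ker(\varphi_{n+1})=\ker(\varphi_n)\subseteq J_n$, so $j\in J_n$ and therefore $J_n=J_{n+1}$. A similar comparison shows $\varphi_n=\varphi_{n+1}$ on the common domain, so $K_n=K_{n+1}$. The main obstacle is really just this last bookkeeping step; the ingredients are already packaged by Proposition~\ref{leftidealdescription}, and the only substantive content is verifying that the three auxiliary chains actually encode the chain $(K_n)$ up to stabilization.
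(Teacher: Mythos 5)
Your proposal is correct and follows essentially the same route as the paper: the forward direction via the left ideals $0\oplus J$ and the copy of $R$ as a quotient (the paper uses $A\oplus I$ instead, which is equivalent), and the converse by decomposing each $K_n$ via Proposition~\ref{leftidealdescription} and observing that the three auxiliary chains $A_n$, $Z_n$, $\ker(\varphi_n)$ determine $K_n$ once they stabilize. Your explicit element chase showing $J_{n+1}=J_n$ is just a more detailed version of the paper's remark that $\varphi_{n+1}$ extends $\varphi_n$ with the same image and kernel.
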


\begin{proof}
We will only prove (1), since (2) can be proved essentially by reversing all the inclusions in this argument.

Suppose that $E(R,I)$ is left noetherian. Let $A_1 \subseteq A_2 \subseteq \dots$ be a chain of left ideals of $R$, and let $J_1 \subseteq J_2 \subseteq \dots$ be a chain of left $R$-ideals of $I$. Then $A_1 \oplus I \subseteq A_2 \oplus I \subseteq \dots$ and $0 \oplus J_1 \subseteq 0 \oplus J_2 \subseteq \dots$ are chains of left ideals of $E(R,I)$, and must therefore stabilize. Hence, $A_1 \subseteq A_2 \subseteq \dots$ and  $J_1 \subseteq J_2 \subseteq \dots$ must stabilize as well.

Conversely, suppose that $R$ is left noetherian and $I$ is left noetherian as an $R$-rng. Let $K_1 \subseteq K_2 \subseteq \dots$ be a chain of left ideals of $E(R,I)$. For each each $K_i$, let $A_i$, $Z_i$, $J_i$, and $\varphi_i$ be appropriate left ideals, left $R$-submodules, and homomorphisms, as specified in Proposition~\ref{leftidealdescription}. Then $A_1 \subseteq A_2 \subseteq \dots$, $Z_1 \subseteq Z_2 \subseteq \dots$, and $\ker(\varphi_1) \subseteq \ker(\varphi_2) \subseteq \dots$. By hypothesis, there must be some positive integer $m$ such that for all $n \geq m$, $A_n = A_m$, $Z_n = Z_m$, and $\ker(\varphi_n) = \ker(\varphi_m)$. In particular, for each $n\geq m$, the image and kernel of $\varphi_n$ must equal those of $\varphi_m$. From this it follows that for each $n\geq m$, $J_n = J_m$ and $\varphi_n = \varphi_m$ (since $\varphi_n$ extends $\varphi_m$), showing that $K_n = K_m$ as well.
\end{proof}

Finally, let us give an example which shows that for an $R$-rng $I$, the concepts ``left noetherian" and ``left noetherian as an $R$-rng" do not in general coincide, even if $R$ is left noetherian. A similar example can be constructed to show that ``left artinian" and ``left artinian as an $R$-rng" differ as well.

\begin{example}
Let $I$ be the $\Q$-rng that has the underlying $\Q$-vector space structure of $\Q$ and trivial multiplication (i.e., $I^2 = 0$). For each positive integer $n$, let $J_n = (\frac{1}{2})^n\Z \subseteq I$. Then $J_1 \subseteq J_2 \subseteq \dots$ is a chain of ideals of $I$ that does not stabilize. Thus, $I$ is not (left) noetherian. On the other hand, the only $\Q$-ideals of $I$ are $0$ and $I$.
\end{example}

\noindent
Department of Mathematics \newline
Ben Gurion University \newline
Beer Sheva, 84105 \newline
Israel \newline

\noindent Email: {\tt mesyan@bgu.ac.il}


\begin{thebibliography}{00}

\bibitem{BH} G.\ F.\ Birkenmeier and H.\ E.\ Heatherly, {\em Embeddings of strongly right bounded rings and algebras,} Comm.\ Alg.\ \textbf{17} (1989), no.\ 3, 573--586.

\bibitem{DF} Marco D'Anna and Marco Fontana, {\em An amalgamated duplication of a ring along an ideal: the basic properties,} J.\ Algebra Appl.\ \textbf{6} (2007), no.\ 3, 443--459.

\bibitem{DF2} Marco D'Anna and Marco Fontana, {\em The amalgamated duplication of a ring along a multiplicative-canonical ideal,} Ark.\ Mat.\ \textbf{45} (2007), no.\ 2, 241--252.

\bibitem{RH} B.\ de la Rosa and G.\ A.\ P.\ Heyman, {\em A note on radicals and the Dorroh extension,} Arch.\ Math.\ \textbf{42} (1984) 516.

\bibitem{Divinsky} N.\ J.\ Divinsky, {\em Rings and Radicals,} University of Toronto Press, 1965.

\bibitem{Dorroh} J.\ L.\ Dorroh, {\em Concerning adjunctions to algebras,} Bull.\ Amer.\ Math.\ Soc.\ \textbf{38} (1932) 85--88.

\bibitem{DM} Thomas J.\ Dorsey and Zachary Mesyan, {\em On minimal extensions of rings,} Comm.\ Algebra, to appear. (ArXiv: 0805.3370)

\bibitem{Everett} C.\ J.\ Everett, {\em An extension theory for rings,} Amer.\ J.\ Math.\ \textbf{64} (1942) 363--370.

\bibitem{Feigelstock} Shalom Feigelstock, {\em On radicals of ring extensions,} Arch.\ Math.\ \textbf{46} (1986) 38.

\bibitem{Haimo} Franklin Haimo, {\em Quasi-regular elements and Dorroh extensions,} Illinois J.\ Math.\ \textbf{11} (1967) 78--91.

\bibitem{Lam} T.\ Y.\ Lam, {\em A first course in noncommutative rings,} Second Edition, Graduate Texts in Math., Vol.\ \textbf{131}, Springer-Verlag, Berlin-Heidelberg-New York, 2001.

\bibitem{MY} Hamid Reza Maimani and Siamak Yassemi, {\em Zero-divisor graphs of amalgamated duplication of a ring along an ideal,}  J.\ Pure Appl.\ Algebra  \textbf{212} (2008), no.\ 1, 168--174.

\bibitem{Nagata} M.\ Nagata, {\em Local Rings}, Interscience, New York, 1962. 

\bibitem{NZ1} W.\ K.\ Nicholson and Y.\ Zhou, {\em Rings in which elements are uniquely the sum of an idempotent and a unit,} Glasgow Math.\ J.\ \textbf{46} (2004) 227--236.

\bibitem{NZ2} W.\ K.\ Nicholson and Y.\ Zhou, {\em Clean general rings,} J.\ Algebra \textbf{291} (2005) 297--311.

\bibitem{Petrich} M.\ Petrich, {\em Ideal extensions of rings,} Acta Math.\ Hung.\ \textbf{45} (1985) 263--283. 

\bibitem{RB} R.\ M.\ Raphael and W.\ D.\ Burgess, {\em Ideal extensions of rings--some topological aspects,}  Comm.\ Algebra \textbf{23} (1995), no.\ 10, 3815--3830.

\end{thebibliography}
\end{document}